\newtheorem{Thm}{Theorem}[section]
\newtheorem{Prop}[Thm]{Proposition}
\newtheorem{Lem}[Thm]{Lemma}
\newtheorem{Cor}[Thm]{Corollary}
\newtheorem{Thmint}{Theorem}[section]
\newtheorem{MThm}{Main}
\newtheorem{Corint}[Thmint]{Corollary}
\theoremstyle{definition}
\newtheorem{Rem}[Thm]{Remark}
\newtheorem{Def}[Thm]{Definition}
\newtheorem{Exm}[Thm]{Example}
\newcommand{\Cs}{C$^\ast$}
\newcommand{\Ws}{W$^\ast$}
\newcommand{\id}{\mbox{\rm id}}
\newcommand{\Homeo}{\mathop{{\rm Homeo}}}
\newcommand{\rg}{\mathop{{\mathrm C}_{\mathrm r}^\ast}}
\newcommand{\rc}{\mathop{\rtimes _{\mathrm r}}}
\newcommand{\votimes}{\mathop{\bar{\otimes}}}
\newcommand{\motimes}{\mathop{{\otimes}_{\rm max}}}
\newcommand{\bigvotimes}{\mathop{\bar{\bigotimes}}}
\newcommand{\vnc}{\mathop{\bar{\rtimes}}}
\newcommand{\ac}{\mathop{\rtimes _{\mathrm{alg}}}}
\DeclareMathOperator{\supp}{supp}
\DeclareMathOperator{\PSL}{PSL}
\DeclareMathOperator{\SL}{SL}
\DeclareMathOperator{\Cso}{{\rm C}^\ast}
\DeclareMathOperator{\Wso}{{\rm W}^\ast}
\DeclareMathOperator{\ncl}{norm-closure}
\DeclareMathOperator{\stcl}{\sigma-strong-closure}
\DeclareMathOperator{\sslim}{\ast-strong-}
\DeclareMathOperator{\im}{im}
\DeclareMathOperator{\bigfp}{\lower0.25ex\hbox{\LARGE $\ast$}}
\newcommand{\ad}{\mathop{\rm ad}}
\title[Intermediate operator algebras vs. intermediate extensions]{\small Complete descriptions of intermediate operator algebras by intermediate extensions
of dynamical systems}
\author{Yuhei Suzuki}
\subjclass[2000]{Primary~46L05, 46L10, 
Secondary~46L55,
54H20}
\keywords{Intermediate operator algebras, maximal amenability, endomorphisms, lattice realization problem.}
\address{Graduate school of mathematics, Nagoya University, Chikusaku, Nagoya, 464-8602, Japan}
\email{yuhei.suzuki@math.nagoya-u.ac.jp}
\begin{document}
\begin{abstract}
Practically and intrinsically, inclusions of operator algebras are of fundamental interest.
The subject of this paper is intermediate operator algebras of inclusions. 
There are two previously known theorems which naturally and completely describe all intermediate operator algebras: the Galois Correspondence Theorem and the Tensor Splitting Theorem. Here we establish the third, new complete description theorem which gives a canonical bijective correspondence between intermediate operator algebras and intermediate extensions of dynamical systems.
One can also regard this theorem as a crossed product splitting theorem, analogous to the Tensor Splitting Theorem. We then give concrete applications, particularly to maximal amenability problem and a new realization result of intermediate operator algebra lattice.
\end{abstract}
\maketitle
\section{Introduction}\label{Sec:intro}

In operator algebra theory, it is a fundamental problem to consider inclusions of operator algebras.
They naturally arise from many subjects,
including knot theory \cite{Jon91}, operator theory \cite{Arv}, and algebraic quantum field theory \cite{Kaw}.
Certainly inclusions also have intrinsic interest; since Jones'
index theorem \cite{Jon}, subfactor theory becomes one of the central subject in operator algebra theory. the Connes embedding problem \cite{Con}
(see \cite{OzaQ} for a survey)
is one of the most important open problem in operator algebra theory---
the highlight is Kirchberg's theorem \cite{Kir93}
which reveals unexpected connections of this problem with other important open problems.
Also, not just being of intrinsic interest, but also nice inclusions often play crucial roles
in the classification/structure theory of operator algebras.
Here we list only a few of celebrated results in which inclusions/embeddings play crucial roles:
Connes' classification of injective factors \cite{Con} (cf.~\cite{OzaQ}),
Popa's deformation/rigidity theory \cite{Pop06}, \cite{PopICM},
the Akemann--Ostrand property for hyperbolic groups \cite{HG} (see \cite{Oza04}, \cite{OP} for significant applications),
characterizations of \Cs-simplicity of discrete groups \cite{KK}, \cite{BKKO} (cf.~ \cite{Ham85}, \cite{Ham11}, \cite{Oza07}).
These facts support the importance of the study of inclusions of operator algebras,
particularly to give detailed analysis of inclusions and
to construct inclusions with interesting properties.

The subject of the paper is the complete description problem of
inclusions of operator algebras.
Here we briefly recall some known major results on this subject.
Galois correspondence results state that for compact groups $G$ and for discrete groups $\Gamma$, under certain assumptions,
there is a natural bijective correspondence between intermediate von Neumann algebras
of $M^G \subset M$, $M \subset M \vnc \Gamma$
and closed subgroups of $G$, $\Gamma$ respectively.
This reduces the complete description problem of intermediate operator algebras---usually very hard and almost impossible---to that of closed subgroups---algebraic, tractable problem.
We refer the reader to \cite{ILP} for backgrounds and history of this subject.
The fundamental work of Izumi--Longo--Popa \cite{ILP}
finally established the Galois correspondence for all minimal actions of $G$
and all outer actions of $\Gamma$ on arbitrary factors (\cite{ILP}, Theorems 3.15 and 3.13). The \Cs-algebra analogues of Galois correspondences are also studied by many authors.
Izumi has shown the Galois correspondence
for outer actions of finite groups on $\sigma$-unital simple \Cs-algebras, both as a compact group and as a discrete group
(\cite{Izu}, Corollary 6.6).

Another description result of intermediate operator algebras
arises from tensor products.
Ge--Kadison's tensor splitting theorem \cite{GK} states that
for any factor $M$ and for any von Neumann algebra $N$,
any intermediate von Neumann algebra of the inclusion $M \subset M \votimes N$
splits into $M\votimes N_0$ for some von Neumann subalgebra $N_0 \subset N$.
An analogous result for simple \Cs-algebras is independently proved
by Zacharias \cite{Zac} and Zsido \cite{Zsi}.
This reduces the problem on intermediate operator algebras
to that on operator subalgebras.
We later combine these results with our theorems to obtain interesting inclusions. See Examples \ref{Exm:minnuc} to \ref{Exm:Lat}.

In this paper, we establish a new complete description result from a different direction.
Instead of changing groups, we change the coefficients in the crossed products.
More precisely, we establish the bijective correspondence between intermediate operator algebras
and intermediate extensions of dynamical systems for certain crossed product inclusions.
This reduces the complete description problem of intermediate operator algebras
to that of intermediate extensions, which is geometric/ergodic theoretical, relatively tractable, with various interesting examples.
This establishes a new interaction between operator algebras and dynamical systems.
\begin{MThm}\label{Thm:Main}
We obtain the following bijective correspondences.
\begin{description}
\item[\Cs-case (Theorem \ref{Thm:MainC})]
Let $\Gamma$ be a discrete group satisfying the AP \cite{HK}.
Let $\alpha \colon \Gamma \curvearrowright X$ be a free action on a locally compact space $X$.
Let $\beta\colon \Gamma \curvearrowright Y$ be an extension of $\alpha$.
Let $\pi \colon Y \rightarrow X$ be a proper factor map.
Then the map 
\[C_0(Z) \mapsto C_0(Z)\rc \Gamma \]
gives a bijective correspondence between the set of intermediate extensions
$C_0(X) \subset C_0(Z) \subset C_0(Y)$ of $\pi$
and that of intermediate \Cs-algebras
of 
$C_0(X)\rc \Gamma \subset C_0(Y)\rc \Gamma$.
\item[\Ws-case (Theorem \ref{Thm:MainW})]
Let $\Gamma$ be a countable group.
Let $\alpha \colon \Gamma \curvearrowright (X, \mu)$ be an essentially free non-singular action on a standard probability space.
Let $\beta \colon \Gamma \curvearrowright (Y, \nu)$ be an extension of $\alpha$.
Let $\pi \colon Y \rightarrow X$ be a factor map.
Then the map 
\[L^\infty(Z) \mapsto L^\infty(Z)\vnc \Gamma \]
gives a bijective correspondence between the set of intermediate extensions $L^\infty(X) \subset L^\infty(Z)\subset L^\infty(Y)$ of $\pi$
and that of intermediate von Neumann algebras
of 
$L^\infty(X)\vnc \Gamma \subset L^\infty(Y)\vnc \Gamma$.
\end{description}
\end{MThm}
To the author's knowledge, this is the first result in this direction.
(However we point out that Hamachi--Kosaki \cite{HaK}
studied some of these inclusions in the context of subfactor theory.)
The statement is almost complete;
we cannot remove the freeness condition (even cannot be weakened to topological freeness (Proposition \ref{Prop:topfree})), and the \Cs-algebra case fails for non-exact groups (Proposition \ref{Prop:nonexact}).
Typical examples of free actions arise as the left translation actions associated to group embeddings into locally compact groups.
Also, we have shown in \cite{Suzst} that (countable, non-torsion, exact) groups admit many minimal free actions on various compact spaces (\cite{Suzst}, Theorem B.1).
Essentially free actions naturally appear as translation
actions on homogeneous spaces and as Bernoulli shift actions; see \cite{Zim}.

As a special application, we give new examples
of maximal amenable subfactors.
We shortly recall a background of the maximal amenability problem.
The first observation on maximal amenability
is due to Fuglede--Kadison \cite{FK}, where they observed that
all II$_1$-factors contain a maximal AFD II$_1$ subfactor.
In the seminal paper \cite{Pop}, Popa invented a powerful strategy
(now called Popa's asymptotic othogonality property)
which in particular confirms the maximal amenability of
the inclusion $L \mathbb{Z} \subset L\mathbb{F}_2$; $\mathbb{Z}=\langle a \rangle \subset\langle a, b \rangle =\mathbb{F}_2$.
Remarkably, this settles Kadison's problem
asking if every amenable von Neumann subalgebra of a type II$_1$ factor
admits an intermediate AFD II$_1$ factor.
After his discovery, following basically the same line, many researchers study and construct
various examples of maximal amenable von Neumann subalgebras.
We refer the reader to \cite{Hou} and references therein for some of them.
Recently, based on the study of central states and \Cs-algebra theory, Boutonnet--Carderi \cite{BC} gave a new method
to confirm maximal amenability of certain group von Neumann algebra inclusions
$L\Lambda \subset L\Gamma$.
Our Main Theorem gives yet another method to
provide maximal amenable subalgebras.
Combining the Main Theorem and Margulis' factor theorem \cite{Mar78},
we obtain new examples of maximal amenable subfactors of full factors of ergodic theoretical origin.
Notably, the lattices of intermediate von Neumann algebras of these inclusions
are completely computable (cf.~ Proposition 7.76 of \cite{Kna}, Example \ref{Exm:Mar}).
For simplicity, here we restrict the statement to simple Lie groups.
See Corollary \ref{Cor:Mar} for the full statement.
\begin{Corint}\label{Corint:Mar}
Let $G$ be a connected simple Lie group with trivial center of real rank at least $2$.
Let $\Gamma$ be a lattice in $G$.
Let $P$ be a minimal parabolic subgroup of $G$.
Let $Q$ be a proper intermediate closed group of $P \subset G$.
Then the map
\[R\mapsto L^\infty(G/R)' \vnc\Gamma {\rm ~(where~the~ commutant~ is~ taken~ in~} \mathcal{B}(L^2(G/P)))\]
gives a lattice isomorphism from
the lattice of intermediate closed groups $R$ of $P\subset Q$
onto that of intermediate von Neumann algebras of $L^\infty(G/P)\vnc \Gamma \subset L^\infty(G/Q)'\vnc \Gamma$.
In particular $L^\infty(G/P)\vnc \Gamma$ is a maximal amenable
subalgebra of the non-amenable factor $L^\infty(G/Q)' \vnc \Gamma $.
\end{Corint}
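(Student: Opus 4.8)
The plan is to deduce the statement from Theorem \ref{Thm:MainW} applied to a single extension, Margulis' factor theorem, and the commutation theorem for von Neumann crossed products.

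First I would fix the dynamical input. Equip $G/P$ and $G/Q$ with the $K$-invariant probability measures in their Lebesgue classes; the $G$-action, hence the $\Gamma$-action, is non-singular, and the canonical $G$-map $\pi\colon G/P\to G/Q$ is a $\Gamma$-equivariant factor map. I would check that the base action $\Gamma\curvearrowright G/Q$ is essentially free: for $\gamma\neq e$ the fixed set $\{gQ: g^{-1}\gamma g\in Q\}$ is a proper Zariski-closed, hence null, subset of $G/Q$, since $\gamma$ lies in the normal core $\bigcap_g gQg^{-1}$ only for $\gamma=e$ (as $G$ is simple with trivial center and $Q\subsetneq G$). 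Thus Theorem \ref{Thm:MainW} applies with $X=G/Q$ and $Y=G/P$ and yields an inclusion-compatible bijection between intermediate extensions $L^\infty(G/Q)\subseteq L^\infty(Z)\subseteq L^\infty(G/P)$ and intermediate von Neumann algebras of $L^\infty(G/Q)\vnc\Gamma\subseteq L^\infty(G/P)\vnc\Gamma$, via $L^\infty(Z)\mapsto L^\infty(Z)\vnc\Gamma$. By Margulis' factor theorem \cite{Mar78} (this is where real rank at least $2$ enters), together with the standard fact that every closed subgroup between a minimal parabolic $P$ and $G$ is parabolic, the measurable $\Gamma$-factors $Z$ with $G/P\to Z\to G/Q$ are exactly the $G/R$ for $R$ in the finite lattice of intermediate closed groups of $P\subseteq Q$. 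As $R\mapsto L^\infty(G/R)$ is inclusion-reversing, this gives an order-reversing lattice isomorphism $R\mapsto L^\infty(G/R)\vnc\Gamma$ onto the intermediate algebras of $L^\infty(G/Q)\vnc\Gamma\subseteq L^\infty(G/P)\vnc\Gamma$.

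Next I would pass to the commutant picture demanded by the statement. Realizing all coefficient algebras inside $\mathcal{B}(L^2(G/P))$ and forming the crossed products on $L^2(G/P)\otimes\ell^2\Gamma$ via the Koopman representation $U$ of $\Gamma$, the commutation theorem identifies, for every $\Gamma$-invariant $N\subseteq\mathcal{B}(L^2(G/P))$, the commutant $(N\vnc\Gamma)'$ with a crossed product of $N'$ by $\Gamma$ (implemented by the $U$-twisted right regular representation). Taking commutants is then an order-reversing bijection from the intermediate algebras of $L^\infty(G/Q)\vnc\Gamma\subseteq L^\infty(G/P)\vnc\Gamma$ onto those of $(L^\infty(G/P)\vnc\Gamma)'\subseteq(L^\infty(G/Q)\vnc\Gamma)'$; using that $L^\infty(G/P)'=L^\infty(G/P)$ is a MASA, the latter is a faithful normal representation of the abstract inclusion $L^\infty(G/P)\vnc\Gamma\subseteq L^\infty(G/Q)'\vnc\Gamma$ of the statement. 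Since the lattice of intermediate von Neumann algebras of an inclusion depends only on its abstract isomorphism class, I may transport back to the realization appearing in the statement. Composing the two order-reversing bijections yields the desired order-preserving lattice isomorphism, and chasing $R$ through the construction gives exactly $R\mapsto(L^\infty(G/R)\vnc\Gamma)'=L^\infty(G/R)'\vnc\Gamma$.

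Finally I would read off the amenability statements. Since $P$ is amenable, $\Gamma\curvearrowright G/P$ is an amenable action, so $L^\infty(G/P)\vnc\Gamma$ (the $R=P$ member) is injective; for $R\supsetneq P$ the parabolic $R$ is non-amenable, so $\Gamma\curvearrowright G/R$ is a non-amenable action, $L^\infty(G/R)\vnc\Gamma$ is non-injective, and hence so is its commutant $L^\infty(G/R)'\vnc\Gamma$. Moreover ergodicity of $\Gamma\curvearrowright G/Q$ (Howe--Moore) together with essential freeness makes $L^\infty(G/Q)\vnc\Gamma$, and thus its commutant $L^\infty(G/Q)'\vnc\Gamma$, a factor. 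Therefore every intermediate algebra strictly above $L^\infty(G/P)\vnc\Gamma$ is non-amenable, which is exactly maximal amenability inside the non-amenable factor $L^\infty(G/Q)'\vnc\Gamma$. I expect the main obstacle to be the bookkeeping of the commutation-theorem duality in the third paragraph---pinning down that the commutant of the von Neumann crossed product is again the crossed product of the commutant coefficient algebra, and that this transports the correspondence $R\mapsto L^\infty(G/R)\vnc\Gamma$ to the stated $R\mapsto L^\infty(G/R)'\vnc\Gamma$---whereas Margulis' theorem and Theorem \ref{Thm:MainW} can be invoked essentially as black boxes.
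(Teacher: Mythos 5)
Your proposal is correct and follows essentially the same route as the paper: apply Theorem \ref{Thm:MainW} to the extension $G/P \to G/Q$ (with essential freeness coming from triviality of the normal core, as in Remark 13 of \cite{Oza16}), classify the intermediate extensions by Margulis' factor theorem (Theorem \ref{Thm:Mar}), and pass to the commutant picture via the bicommutant theorem and the commutation theorem for crossed products (Proposition \ref{Prop:comm}). The amenability part also matches the paper's argument, namely Zimmer's equivalence between amenability of $H$ and of the action $\Gamma \curvearrowright G/H$ together with his hyperfiniteness theorem (and stability of injectivity under commutants), maximal amenability of $P$ in $G$, and factoriality from Moore's ergodicity theorem.
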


We also discuss a non-commutative generalization of the Main Theorem (Theorems \ref{Thm:NCMainC} and \ref{Thm:NCMainW}).
This produces further interesting examples of inclusions (Examples \ref{Exm:NCBC} to \ref{Exm:Lat}).
As a byproduct, we obtain the following realization theorem.
\begin{Corint}[Example \ref{Exm:Lat}]\label{Corint:Lat}
Let $G$ be a locally compact second countable group.
Then one can realize the closed subgroup lattice of $G$ as the lattice of intermediate von Neumann algebras of an irreducible subfactor $N \subset M$.
\end{Corint}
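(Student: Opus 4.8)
The plan is to realize the closed-subgroup lattice through the non-commutative \Ws-correspondence (Theorem \ref{Thm:NCMainW}), choosing a \emph{factor}-valued coefficient so that the Tensor Splitting Theorem \cite{GK} automatically kills all ``twisted'' intermediate algebras, and choosing a countable \emph{dense} subgroup so that $\Gamma$-invariance collapses to genuine $G$-invariance. Concretely, I would first fix a countable dense subgroup $\Gamma\le G$ (available since $G$ is second countable) and equip $G$ with a probability measure equivalent to Haar measure; left translation then gives a free, ergodic, non-singular action $\lambda$ of $\Gamma$, and conjugation by the left regular representation gives an action $\mathrm{Ad}\,\lambda$ on $B(L^2(G))$ that fixes $\rho(G)$ pointwise and left-translates $L^\infty(G)$. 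Choosing any factor $\mathcal P$ with a properly outer $\Gamma$-action $\alpha$ (e.g.\ the Bernoulli shift on the hyperfinite factor), I set the coefficient inclusion
\[
\mathcal A:=\mathcal P\votimes L^\infty(G)\ \subseteq\ \mathcal P\votimes B(L^2(G))=:\mathcal B,
\]
with diagonal $\Gamma$-action $\beta:=\alpha\otimes\mathrm{Ad}\,\lambda$. Since $\beta|_{\mathcal A}$ is properly outer and acts ergodically on the center $L^\infty(G)$, Theorem \ref{Thm:NCMainW} applies and yields a lattice isomorphism $\mathcal C\mapsto\mathcal C\vnc\Gamma$ from $\Gamma$-invariant intermediate von Neumann algebras $\mathcal A\subseteq\mathcal C\subseteq\mathcal B$ onto intermediate von Neumann algebras of $N:=\mathcal A\vnc\Gamma\subseteq M:=\mathcal B\vnc\Gamma$.

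Next I would identify the left-hand lattice with the closed-subgroup lattice. Because $\mathcal P$ is a factor, the Tensor Splitting Theorem forces every intermediate $\mathcal C$ to split as $\mathcal C=\mathcal P\votimes\mathcal Q$ with $L^\infty(G)\subseteq\mathcal Q\subseteq B(L^2(G))$; this is the decisive step, as it removes all non-split (field-type) intermediate algebras without any appeal to rigidity. The $\beta$-invariance of $\mathcal C$ is then exactly $\mathrm{Ad}\,\lambda_\Gamma$-invariance of $\mathcal Q$, which by density of $\Gamma$ in $G$ equals $\mathrm{Ad}\,\lambda_G$-invariance. Writing $B(L^2(G))=L^\infty(G)\rtimes_\rho G$, on which $\mathrm{Ad}\,\lambda$ fixes $\rho(G)$ and left-translates $L^\infty(G)$, the Galois correspondence for this free transitive action identifies the $\mathrm{Ad}\,\lambda_G$-invariant intermediate algebras with $\{\,L^\infty(G/H)'=L^\infty(G)\rtimes_\rho H:H\le G\text{ closed}\,\}$, and $H\mapsto L^\infty(G/H)'$ is an order isomorphism onto the closed-subgroup lattice. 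Composing the two bijections gives the asserted lattice isomorphism.

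Finally I would verify that $N\subseteq M$ is an irreducible subfactor. Proper outerness of $\alpha$ makes $\beta$ properly outer on the factor $\mathcal B$ (so $M$ is a factor) and properly outer with ergodic center-action on $\mathcal A$ (so $N$ is a factor). For irreducibility, any element of $N'\cap M$ must commute with $\mathcal A$: proper outerness of $\alpha_\gamma$ on the factor $\mathcal P$ annihilates every Fourier coefficient with $\gamma\neq e$ and confines the rest to $\mathcal A'\cap\mathcal B=L^\infty(G)$, after which commutation with the implementing unitaries leaves only $L^\infty(G)^\Gamma=\mathbb C$ by ergodicity. The main obstacle is the Galois identification in the second paragraph: one must rule out exotic $\mathrm{Ad}\,\lambda_G$-invariant intermediate subalgebras and spurious $2$-cocycles for a general locally compact $G$. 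This, however, is precisely the $P=\{e\}$, $Q=G$ instance of the mechanism already established for Corollary \ref{Corint:Mar}, and the regular representation pins down the untwisted implementation $\rho(H)$, so no new difficulty arises.
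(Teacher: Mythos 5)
Your route is, in outline, exactly the paper's (Example \ref{Exm:Lat}, specialized to $H=\{e\}$): a countable dense $\Gamma\le G$, the coefficient inclusion $\mathcal{P}\votimes L^\infty(G)\subseteq\mathcal{P}\votimes\mathcal{B}(L^2(G))$ with diagonal action, Theorem \ref{Thm:NCMainW} combined with the Ge--Kadison splitting \cite{GK}, and an identification of the invariant intermediate algebras with $\{L^\infty(G/H)':H\le G\ \mathrm{closed}\}$. But two of your justifications are wrong. First, the hypothesis of Theorem \ref{Thm:NCMainW} is \emph{central} $\Gamma$-freeness of the inclusion (Definition \ref{Def:cfW}), which by Remark \ref{Rem:cf} amounts to proper outerness of the induced action on the asymptotic algebra $\mathcal{A}^\omega\cap\mathcal{B}_\omega$ --- a strictly stronger condition than proper outerness of $\alpha$ on $\mathcal{P}$ (and ergodicity on the center is not a hypothesis at all). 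Proper outerness alone cannot suffice: on a full factor, outer (hence properly outer) actions are plentiful while centralizing sequences --- and hence central freeness --- are absent; and the abelian leg cannot rescue the condition, since the projections $\chi_{X_i}\in L^\infty(G)$ furnished by Lemma \ref{Lem:efree} do satisfy $p_j\,s(p_j)=0$ but are nowhere near asymptotically central in $\mathcal{P}\votimes\mathcal{B}(L^2(G))$. Your parenthetical Bernoulli choice happens to be centrally free (Example \ref{Exm:NCBW}), which is precisely why the paper stipulates a \emph{centrally free} $\Gamma$-factor there; so your construction survives, but only after the stated hypothesis is corrected.

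The more serious gap is the step you yourself flag as the main obstacle. You defer the classification of the $\mathrm{Ad}\,\lambda_G$-invariant intermediate algebras to ``the mechanism of Corollary \ref{Corint:Mar}'', but that corollary is powered by Margulis' factor theorem (Theorem \ref{Thm:Mar}), which is specific to irreducible lattices in higher-rank semisimple Lie groups and is simply unavailable for an arbitrary locally compact second countable $G$ with a dense countable $\Gamma$. Fortunately, no crossed-product Galois theory or cocycle analysis is needed at this point: $L^\infty(G)$ is maximal abelian in $\mathcal{B}(L^2(G))$, so every intermediate $\mathcal{Q}$ satisfies $\mathcal{Q}'\subseteq L^\infty(G)$ and is recovered as $\mathcal{Q}=\mathcal{Q}''$; invariance passes to commutants, and the $\Gamma$-invariant --- hence, by density, $G$-invariant --- von Neumann subalgebras of $L^\infty(G)$ are exactly the algebras $L^\infty(G/H)$ for closed $H\le G$, by Mackey's point-realization theory (\cite{Zim}, Appendix B) or the Takesaki--Tatsuuma theorem \cite{TT}, which is what the paper cites. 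With these two repairs your proof closes. I note that your direct verification of $N'\cap M=\mathbb{C}$ (slice maps killing the Fourier coefficients at $\gamma\neq e$, then $L^\infty(G)^\Gamma=\mathbb{C}$ by ergodicity) is correct and in fact yields the paper's form of irreducibility --- every intermediate algebra is a factor, since $P'\cap P\subseteq N'\cap M$ --- where the paper instead quotes factoriality of each $[N\votimes L^\infty(G/L)']\vnc\Gamma$ from Chapter V.7 of \cite{Tak1}.
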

As another byproduct, we construct
``exotic'' endomorphisms on all Kirchberg algebras.
Here recall that a simple separable nuclear purely infinite \Cs-algebra
is said to be a Kirchberg algebra.
One of the highlight of the classification theory of \Cs-algebras
is the Kirchberg--Phillips classification theorem \cite{Kir94}, \cite{Phi}.
This deep theorem states that Kirchberg algebras are completely classified by KK-theory.
This motivates further study on Kirchberg algebras,
especially on their symmetrical structure and inclusions (see e.g., \cite{Nak}, \cite{Izu}, \cite{Izu04}, \cite{IM}).
We point out that this subject also has a strong connection to the index theory (see e.g., \cite{Izu95}, \cite{Izu98}).

We say that an endomorphism $\sigma$ of a \Cs-algebra $A$
is without expectation if the inclusion $\sigma(A) \subset A$
does not admit a conditional expectation.
(This condition in particular means infiniteness of the index \cite{Wat}.)
Similarly, we say that $\sigma$
is without proper intermediate \Cs-algebras if the inclusion $\sigma(A) \subset A$
has no proper intermediate \Cs-algebras.
\begin{Corint}[Theorem \ref{Thm:End}]\label{Corint:End}
Every Kirchberg algebra admits an endomorphism without proper intermediate \Cs-algebras or expectation.
\end{Corint}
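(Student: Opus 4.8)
The plan is to use the \Cs-case of the Main Theorem to convert the assertion into a purely dynamical statement, and then, for a fixed Kirchberg algebra $A$, to exhibit a single proper factor map of free actions whose two crossed products both recover $A$. Concretely, I would look for a discrete group $\Gamma$ with the AP, a free minimal amenable action $\alpha\colon\Gamma\curvearrowright X$ on a compact metrizable space, and an extension $\beta\colon\Gamma\curvearrowright Y$ with proper factor map $\pi\colon Y\to X$ such that: (i) $\pi$ admits no proper intermediate extension; (ii) $\pi$ carries no $\Gamma$-equivariant continuous field of probability measures on its fibres; and (iii) both $C(X)\rc\Gamma$ and $C(Y)\rc\Gamma$ are isomorphic to $A$ (the non-unital case being analogous with locally compact spaces). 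Granting such data, the composite
\[
\sigma\colon A\xrightarrow{\ \cong\ }C(X)\rc\Gamma\hookrightarrow C(Y)\rc\Gamma\xrightarrow{\ \cong\ }A,
\]
where the middle inclusion is induced by $\pi^{*}\colon C(X)\hookrightarrow C(Y)$, is an endomorphism, injective because $A$ is simple, with $\sigma(A)\subsetneq A$ proper since $\pi$ is not a homeomorphism. Theorem~\ref{Thm:MainC} then identifies the intermediate \Cs-algebras of $\sigma(A)\subset A$ with the intermediate extensions of $\pi$, so (i) yields precisely the absence of proper intermediate \Cs-algebras.

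For the no-expectation half I would use (ii). A conditional expectation $E\colon C(Y)\rc\Gamma\to C(X)\rc\Gamma$, composed with the canonical expectation onto $C(X)$ and restricted to $C(Y)$, produces a $\Gamma$-equivariant conditional expectation $C(Y)\to C(X)$, which is exactly a $\Gamma$-equivariant continuous field of probability measures on the fibres of $\pi$; this is the standard averaging argument, and it uses only $(A\otimes1)$-bimodularity in the tensored set-up below. Hence (ii) rules out any expectation, forcing infinite index by \cite{Wat}.

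For the construction I would take the fibre to be a prime $\Gamma$-boundary $F$ (minimal, strongly proximal, with topologically amenable action) and set $Y=X\times F$ with the diagonal action, $\pi$ the first projection. Freeness of the $Y$-action is inherited from $X$; amenability is the product of two amenable actions; minimality follows from the standard fact that the product of a minimal system with a boundary is minimal; and pure infiniteness of $C(X\times F)\rc\Gamma$ is supplied by the boundary factor $F$, so the crossed product is Kirchberg. Strong proximality forces $F$ to carry no $\Gamma$-invariant probability measure, and, having arranged that $F$ is not a factor of $X$, a short argument (any measure in the image of an equivariant field is pushed to a point mass by strong proximality, whence all of them are, by minimality of the base) gives (ii). Since a proper factor $F\to F'$ would produce a proper intermediate extension $X\times F\to X\times F'\to X$, primeness of $F$ is what is needed for (i); conversely an intermediate extension corresponds to a continuous $\Gamma$-equivariant field over $X$ of closed invariant equivalence relations on $F$, which the same boundary rigidity collapses to the two trivial relations. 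To realise $A$ on both sides I would begin from a free minimal amenable $\Gamma$-action with $C(X)\rc\Gamma\cong\mathcal{O}_\infty$ (such actions exist for suitable AP groups, cf.~\cite{Suzst}), chosen so that $C(X\times F)\rc\Gamma\cong\mathcal{O}_\infty$ as well; then tensoring the whole diagram by $A$, using Kirchberg's absorption $A\cong A\otimes\mathcal{O}_\infty$ and the non-commutative Main Theorem (Theorem~\ref{Thm:NCMainC}) with the freeness provided by the central action of $\Gamma$ on $C(X)$, upgrades both crossed products to $A$ while keeping the correspondence with intermediate extensions intact. Crucially, this covers \emph{all} Kirchberg algebras with no UCT hypothesis on $A$.

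The main obstacle is the dynamical input: producing an amenable boundary action of an AP group that is genuinely \emph{prime}, and controlling $K$-theory so that both crossed products land on $\mathcal{O}_\infty$. Primeness is the delicate point—for candidate boundaries such as $G/P$ of a rank-one lattice or $\partial\Gamma$ of a hyperbolic group one must invoke a factor/rigidity theorem to exclude proper $\Gamma$-factors—and the verification that fibrewise equivariant fields of equivalence relations over a minimal base collapse is the technical heart of deducing (i). By comparison, the remaining points (checking the freeness needed to apply Theorem~\ref{Thm:NCMainC}, and the Künneth bookkeeping for the $\mathcal{O}_\infty$-matching) are routine.
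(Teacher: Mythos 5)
Your overall skeleton---reduce to dynamics via the Main Theorem, arrange a fibre with no proper intermediate quotients, kill expectations via absence of invariant measures, then tensor with $A$ and absorb $\mathcal{O}_\infty$ to avoid any UCT hypothesis---matches the paper's strategy in outline, but the three steps you defer or call routine are exactly where your route breaks, and where the paper does something different. First, the minimality claim is not a standard fact and is false in general: the product of a minimal system with a boundary need not be minimal (take $X=F$; the diagonal is a proper closed invariant set), and the disjointness theorems that would rescue it require an invariant measure on $X$, which is impossible here since $\alpha$ is amenable and $\Gamma$ is non-amenable. Second, and more seriously, the ``K\"{u}nneth bookkeeping'' is not routine: you need $K_0=\mathbb{Z}$ generated by $[1]$ and $K_1=0$ for \emph{both} $C(X)\rc\Gamma$ and $C(X\times F)\rc\Gamma$ with commutative coefficients, and for a free group the Pimsner--Voiculescu sequence generically produces nonzero $K_1$, with no mechanism available on the commutative side to kill it (classification-type realization theorems for prescribed $K_0$-module structure, such as Phillips' Theorem 4.1.1, have no commutative analogue). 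The paper never realizes $\mathcal{O}_\infty$ as a crossed product with commutative coefficients at all: it realizes $\mathcal{O}_\infty$ as a \emph{corner} $qD_iq$ (flagged as the key idea, with the corner-transfer Lemma \ref{Lem:corner}), kills K-theory using an auxiliary Kirchberg algebra $C$ with $K_0(C)\cong\mathbb{Z}[\Gamma]$, $[1_C]_0=0$, carrying the translation action (so that $\ker\nu=0$ and ${\rm coker}\,\nu\cong\mathbb{Z}$ in the PV sequence for the infinite-rank free group), and takes the fibre to be the Hilbert cube $Q$ with a dense-image homomorphism $\Gamma\rightarrow\Homeo(Q)$: primeness is then immediate from two-point transitivity, absence of invariant measures is immediate, and contractibility of $Q$ makes $D_1\subset D_2$ a KK-equivalence---so no boundary, no rigidity theorem, and no K-theoretic matching of the fibre is ever needed. (Nuclearity is also decoupled from the fibre: it enters through an exhausting sequence of $\Gamma$-subalgebras of the coefficient containing $\iota(C(\mathbb{T}))$.)

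Third, the step ``tensoring \dots keeps the correspondence with intermediate extensions intact'' has a genuine gap. Theorem \ref{Thm:NCMainC} reduces you to intermediate $\Gamma$-\Cs-algebras of $A\otimes C(X)\subset A\otimes C(X)\otimes C(F)$, and these need not be of the form $A\otimes C(Z)$: your field-of-equivalence-relations analysis lives entirely in the commutative picture and does not survive the tensor with $A$. The paper closes exactly this gap with Zacharias' splitting theorem (Theorem 3.3 of \cite{Zac}), which requires the non-fibre tensor factor to be \emph{simple}; that is why the paper's inclusion is $A\otimes D\subset A\otimes D\otimes C(Q)$, with $A\otimes D$ simple and all commutativity pushed into the prime fibre $Q$, whereas your base $A\otimes C(X)$ has centre $C(X)$ and the splitting theorem is unavailable (the final remark of Section \ref{Sec:NC} shows that equivariant splitting can indeed fail). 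Your no-expectation argument (compose with the canonical expectation, restrict, and use a multiplicative-domain/trivial-centre argument to extract an invariant measure) is essentially the paper's and is fine; but without repairs to the minimality claim, the $K$-theory, and the splitting step, the construction as proposed does not go through.
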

To the author's knowledge, Corollary \ref{Corint:End} is the first result in this direction.
\subsection*{Organization of the paper}
In Section \ref{Sec:C}, we prove the Main Theorem for \Cs-algebras.
We also demonstrate how freeness and the AP
are essential by constructing counterexamples.
Section \ref{Sec:W} is devoted for von Neumann algebras.
Corollary \ref{Corint:Mar} then follows from Margulis' factor theorem.
In Section \ref{Sec:NC}, we give a non-commutative generalization of the Main Theorem (Theorems \ref{Thm:NCMainC} and \ref{Thm:NCMainW}).
We then provide some applications, including Corollary \ref{Corint:Lat}.
In Section \ref{Sec:Pf}, we prove
Corollary \ref{Corint:End}.
To construct the desired endomorphism,
we first construct an appropriate actions of the infinite rank free group
on a Kirchberg algebra.

\subsection*{Lattices associated to inclusions}
For an inclusion $A \subset B$ of \Cs-algebras,
the set of intermediate \Cs-algebras of $A \subset B$
has the lattice operation
\[ C \vee D := \Cso(C, D),\]
\[ C \wedge D := C \cap D.\]
Similarly, for a closed subgroup of a topological group
and for an inclusion of von Neumann algebras,
there is a natural lattice structure on the set of intermediate objects.

\section{Intermediate \Cs-algebras and topological dynamical systems}\label{Sec:C}
In this section, we prove the Main Theorem for \Cs-algebras (Theorem \ref{Thm:MainC}).

For the basic facts and notations on \Cs-algebras, we refer the reader to \cite{BO}.
Here we only fix some notations.
Let $\Gamma$ be a discrete group.
For a $\Gamma$-\Cs-algebra $A$,
for $s \in \Gamma$,
denote by $u_s$ the implementing unitary element for $s$ in the multiplier algebra
$\mathcal{M}(A \rc \Gamma)$.
Denote by $E \colon A \rc \Gamma \rightarrow A$
the canonical conditional expectation
$E(x u_s) =x \delta_{e}(s)$; $x\in A, s\in \Gamma$.
For $a \in A \rc \Gamma$ and $s\in \Gamma$,
denote by $E_s(a)$ the $s$-th coefficient $E(a u_s^\ast)$ of $a$.
For a family $S_1, \ldots, S_n$ of subsets or elements in a \Cs-algebra $A$,
denote by $\Cso(S_1, \ldots, S_n)$ the \Cs-subalgebra of $A$ generated by
$S_1, \ldots, S_n$.
The symbol `$\otimes$' stands for the minimal tensor product of \Cs-algebras.

We next recall some terminologies of topological dynamical systems.
For a locally compact (Hausdorff) space $X$, by the Gelfand duality,
there is a bijective correspondence between group actions
on $X$ and those on $C_0(X)$.
We use the same symbol for the corresponding action.
For instance, for $\alpha \colon \Gamma \curvearrowright X$,
$f\in C_0(X)$, $s\in \Gamma$,
we set $\alpha_s(f)=f \circ \alpha_{s}^{-1}$.
For two actions $\alpha \colon \Gamma \curvearrowright X$
and $\beta \colon \Gamma \curvearrowright Y$ of a discrete group $\Gamma$
on locally compact spaces $X, Y$,
$\beta$ is said to be an extension of $\alpha$
if there is an equivariant quotient map $\pi \colon Y \rightarrow X$.
We refer to $\pi$ as a factor map from $\beta$ onto $\alpha$.
We also refer to $\pi$ as an extension. Note that the choice of a factor map is
not necessary unique.
Hence giving an extension $\pi$
has more information than just stating ``$\beta$ is an extension of $\alpha$.''
A factor map is said to be proper if the preimage of any compact set is compact.
Let $\pi \colon Y \rightarrow X$ be an extension.
An intermediate extension of $\pi$
is a triplet $(\gamma, p, q)$
where $\gamma \colon \Gamma \curvearrowright Z$ is an action on a locally compact space,
$p\colon Y \rightarrow Z$, $q\colon Z \rightarrow X$ are factor maps
with $q\circ p =\pi$.
Note that when $\pi$ is proper, so are $p, q$.
We identify two intermediate extensions
$(\gamma, p, q)$ and $(\gamma' \colon \Gamma \curvearrowright Z', p', q')$ if
there is a $\Gamma$-homeomorphism
$h \colon Z \rightarrow Z'$ satisfying
$h\circ p=p', q= q' \circ h$.
For a proper extension $\pi$, after this identification,
the Gelfand duality gives a bijective correspondence between
the intermediate extensions of $\pi$
and the intermediate $\Gamma$-\Cs-algebras
of the associated inclusion $C_0(X)\subset C_0(Y)$.
This induces a lattice structure on
the set of equivalence classes of intermediate extensions of $\pi$.

We start by the following simple lemma.
\begin{Lem}\label{Lem:ccpC}
Let $A$ be a \Cs-algebra.
Let $\Phi \colon A\rightarrow A$ be a contractive completely positive map.
Let $S$ be a subset of $A$.
Define
\[\ell^2( S)_1:=\left\{(s_i)_{i=1}^\infty \in S^{\mathbb{N}}:{\rm the ~series~}S:=\sum_{i=1}^\infty s_i^\ast s_i {\rm ~converges~in~norm~ and~ } \|S\|\leq 1\right\}.\] 
For $a\in A$,
set \[\mathfrak{N}(a; S):=\ncl\left\{\sum_{i=1}^\infty s_i^\ast a s_i \colon (s_i)_{i=1}^\infty \in \ell^2( S)_1\right\}.\]
Then the set
\[\mathfrak{N}(\Phi; S) :=\{a\in A: \Phi(a)\in \mathfrak{N}(a; S)\}\]
is norm closed in $A$.
\end{Lem}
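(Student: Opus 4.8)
The plan is to verify norm-closedness sequentially: given $a_n \in \mathfrak{N}(\Phi; S)$ with $a_n \to a$ in norm, I would show $\Phi(a) \in \mathfrak{N}(a; S)$, that is, that $\Phi(a)$ lies in the norm-closure of the set of all $\sum_{i=1}^\infty s_i^\ast a s_i$ with $(s_i) \in \ell^2(S)_1$. The essential point, which I would isolate first, is a \emph{uniform contractivity estimate}: for every $x \in A$ and every $(s_i)_{i=1}^\infty \in \ell^2(S)_1$, the series $\sum_{i=1}^\infty s_i^\ast x s_i$ converges in norm and satisfies $\|\sum_{i=1}^\infty s_i^\ast x s_i\| \leq \|x\|$.

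I would prove this estimate by viewing the partial sums as $V_N^\ast (x \otimes 1_N) V_N$, where $V_N$ is the column operator with entries $s_1, \dots, s_N$, so that $V_N^\ast V_N = \sum_{i=1}^N s_i^\ast s_i$; then $\|V_N^\ast (x\otimes 1_N) V_N\| \leq \|x\| \, \|V_N^\ast V_N\| \leq \|x\|$, and applying the same bound to tails shows the sequence of partial sums is norm-Cauchy, since $\sum_{i=1}^\infty s_i^\ast s_i$ converges in norm. In particular the defining series of $\mathfrak{N}(a; S)$ genuinely converges for every $a$, so that set is well-defined, and the map $x \mapsto \sum_{i=1}^\infty s_i^\ast x s_i$ is contractive \emph{uniformly in} $(s_i) \in \ell^2(S)_1$.

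With this in hand the closedness follows by a three-$\varepsilon$ argument whose crucial feature is that the \emph{same} sequence $(s_i)$ is reused throughout. Fix $\varepsilon > 0$ and choose $n$ with $\|a - a_n\| < \varepsilon/3$; since $\Phi$ is contractive one also has $\|\Phi(a) - \Phi(a_n)\| < \varepsilon/3$. As $a_n \in \mathfrak{N}(\Phi; S)$, pick $(s_i) \in \ell^2(S)_1$ with $\|\Phi(a_n) - \sum_{i=1}^\infty s_i^\ast a_n s_i\| < \varepsilon/3$. Applying the uniform estimate to $x = a - a_n$ gives $\|\sum_{i=1}^\infty s_i^\ast a s_i - \sum_{i=1}^\infty s_i^\ast a_n s_i\| \leq \|a - a_n\| < \varepsilon/3$, and the triangle inequality then yields $\|\Phi(a) - \sum_{i=1}^\infty s_i^\ast a s_i\| < \varepsilon$. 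Letting $\varepsilon \to 0$ shows $\Phi(a) \in \mathfrak{N}(a; S)$, as required.

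The only genuine obstacle here is conceptual rather than computational: the target set $\mathfrak{N}(a; S)$ itself varies with $a$, so one cannot simply pass to a limit inside a fixed closed set. The uniform contractivity estimate is exactly what resolves this, since it allows an approximation of $\Phi(a_n)$ by $\sum_{i=1}^\infty s_i^\ast a_n s_i$ to be transported to an approximation of $\Phi(a)$ by $\sum_{i=1}^\infty s_i^\ast a s_i$ through the identical index sequence $(s_i)$, with an error controlled solely by $\|a - a_n\|$.
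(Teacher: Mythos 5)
Your proof is correct and takes essentially the same route as the paper: the paper's own proof consists precisely of the observation that for each fixed $(s_i)_{i=1}^\infty \in \ell^2(S)_1$ the map $x \mapsto \sum_{i=1}^\infty s_i^\ast x s_i$ is a contractive (completely positive) map, followed by ``standard norm estimations,'' which is exactly your uniform contractivity estimate plus the three-$\varepsilon$ argument reusing the same sequence $(s_i)$. You have merely made the paper's two-line sketch explicit, including the column-operator verification of contractivity and convergence of the tails.
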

\begin{proof}
Observe that for any $(s_i)_{i=1}^\infty \in \ell^2( S)_1$,
the map $\Psi\colon A \rightarrow A$
given by $\Psi(a):=\sum_{i=1}^\infty s_i^\ast a s_i$
defines a contractive completely positive map.
With this observation, standard norm estimations complete the proof.
\end{proof}

The following lemma is well-known.
For the reader's convenience, we include the proof.
\begin{Lem}\label{Lem:freet}
Let $\alpha \colon \Gamma \curvearrowright X$ be a free action on a locally compact space.
Then for any finite subset $F$ of $\Gamma \setminus\{1\}$
and for any compact subset $K$ of $X$,
there is a finite open cover $U_1, \ldots, U_n$ of $K$
satisfying 
$\alpha_s(U_i) \cap U_i=\emptyset$ for all $i=1, \ldots, n$ and $s\in F$.
\end{Lem}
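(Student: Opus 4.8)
The plan is to build the cover locally, one point of $K$ at a time, and then pass to a finite subcover using compactness of $K$. The essential input is that freeness, combined with the Hausdorff property of $X$, lets us separate each point $x$ from each of its translates $\alpha_s(x)$ with $s \in F$, and the finiteness of $F$ lets us intersect the resulting neighborhoods while staying open.

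The heart of the argument is the single-translate construction. Fix $x \in K$ and a single $s \in F$. Freeness gives $\alpha_s(x) \neq x$, so by Hausdorffness I choose disjoint open sets $V \ni x$ and $W \ni \alpha_s(x)$. I then set $U_x^{(s)} := V \cap \alpha_s^{-1}(W)$, an open neighborhood of $x$: indeed $x \in V$, and $\alpha_s(x) \in W$ gives $x \in \alpha_s^{-1}(W)$. I claim this neighborhood is moved off itself by $\alpha_s$. On the one hand $U_x^{(s)} \subseteq V$; on the other hand, since $\alpha_s$ is a homeomorphism (in particular a bijection), $\alpha_s(U_x^{(s)}) \subseteq \alpha_s(\alpha_s^{-1}(W)) = W$. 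As $V \cap W = \emptyset$, this forces $\alpha_s(U_x^{(s)}) \cap U_x^{(s)} = \emptyset$.

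Next I would intersect over the finite set $F$: put $U_x := \bigcap_{s \in F} U_x^{(s)}$, which is an open neighborhood of $x$ precisely because $F$ is finite. Since $U_x \subseteq U_x^{(s)}$ for every $s \in F$, the inclusion $\alpha_s(U_x) \cap U_x \subseteq \alpha_s(U_x^{(s)}) \cap U_x^{(s)} = \emptyset$ holds for each individual $s$. The family $\{U_x\}_{x \in K}$ is then an open cover of $K$, and compactness of $K$ yields a finite subcover $U_{x_1}, \dots, U_{x_n}$; relabelling these as $U_1, \dots, U_n$ gives the desired cover.

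I do not expect a serious obstacle here. The only points requiring care are in the single-translate step: one must take the preimage $\alpha_s^{-1}(W)$ (rather than an image) so that $\alpha_s(U_x^{(s)}) \subseteq W$, which is where bijectivity of $\alpha_s$ enters, and one must note that intersecting finitely many such neighborhoods preserves the ``moved off'' property separately for each $s \in F$ rather than only collectively. Both are routine once the setup is fixed.
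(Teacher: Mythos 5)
Your proof is correct and follows the same route as the paper: for each $x\in K$ construct an open neighborhood $U_x$ with $\alpha_s(U_x)\cap U_x=\emptyset$ for all $s\in F$ (the paper cites continuity and freeness tersely; you fill in the Hausdorff separation and the $V\cap\alpha_s^{-1}(W)$ construction explicitly, and the paper does assume $X$ Hausdorff, so this is available), then extract a finite subcover of $K$ by compactness. No gaps.
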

\begin{proof}
Since $\alpha$ is free, for any $x\in X$ and $s\in F$,
we have $\alpha_s(x)\neq x$.
Since $\alpha$ is continuous,
for any $x\in X$, one can choose an open neighborhood $U_x$
of $x$ with $\alpha_s(U_x) \cap U_x =\emptyset$ for all $s\in F$.
Obviously $\mathcal{U}:=(U_x)_{x\in K}$ forms an open cover of $K$.
Now take a finite subcover of $\mathcal{U}$ for $K$.
\end{proof}
Haagerup--Kraus \cite{HK} introduced the approximation property (AP)
for discrete (more generally, for locally compact) groups.
The AP is weaker than weak amenability, stronger than exactness,
and it is preserved under many operations.
Variously many groups are known to have the AP, but not all exact
groups have the AP \cite{LS} (e.g., $\SL(3, \mathbb{Z})$).
See \cite{HK} and Section 12 of \cite{BO} for details.
Haagerup--Kraus \cite{HK} related the AP with the validity of the Fubini-type theorem
for the reduced group operator algebras.
In \cite{Suz17}, we observed that the AP is also useful
to determine the position of an element in the reduced crossed product (Proposition 3.4 in \cite{Suz17}).
We need this result, which is the reason of the AP requirement in the following theorem.
\begin{Thm}[The Main Theorem, \Cs-case]\label{Thm:MainC}
Let $\Gamma$ be a discrete group satisfying the AP. Let $X, Y$ be locally compact spaces.
Let $\alpha \colon \Gamma \curvearrowright X$,
$\beta \colon \Gamma \curvearrowright Y$ be free actions.
Let $\pi \colon Y \rightarrow X$ be a proper factor map.
We regard $C_0(X)$ as a $\Gamma$-\Cs-subalgebra of $C_0(Y)$ via $\pi$.
Then the map 
\[C_0(Z) \mapsto C_0(Z)\rc \Gamma \]
gives a lattice isomorphism between the lattice of intermediate extensions $Z$ of $\pi$
and that of intermediate \Cs-algebras
of 
$C_0(X)\rc \Gamma \subset C_0(Y)\rc \Gamma$.
\end{Thm}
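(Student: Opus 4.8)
The plan is to produce an explicit inverse to the assignment $C_0(Z)\mapsto C_0(Z)\rc\Gamma$, namely $A\mapsto A\cap C_0(Y)$, the intersection being taken inside $C_0(Y)\rc\Gamma$. Well-definedness of the forward map is routine: the AP implies exactness, so the reduced crossed product preserves the inclusions $C_0(X)\subset C_0(Z)\subset C_0(Y)$ and $C_0(Z)\rc\Gamma$ is a genuine intermediate \Cs-algebra, while properness of $\pi$ makes $C_0(X)$ act non-degenerately on $C_0(Y)$, so the implementing unitaries $u_s$ behave as multipliers of every intermediate algebra. The easy half of the bijection I would record first: for $\Gamma$-invariant $C_0(Z)$ the canonical expectation $E$ restricts to a faithful expectation $C_0(Z)\rc\Gamma\to C_0(Z)$, so any $f\in C_0(Y)$ lying in $C_0(Z)\rc\Gamma$ satisfies $f=E(f)\in C_0(Z)$; thus $(C_0(Z)\rc\Gamma)\cap C_0(Y)=C_0(Z)$. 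As both assignments are visibly order preserving, once bijectivity is proved the lattice isomorphism is automatic.

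The substance is surjectivity: given an intermediate \Cs-algebra $A$, set $C_0(Z):=A\cap C_0(Y)$ and show $A=C_0(Z)\rc\Gamma$. The crucial step is to prove that $A$ is invariant under the expectation, $E(A)\subseteq A$. First I would show, using freeness of $\alpha$, that every $a\in C_0(Y)\rc\Gamma$ lies in the set $\mathfrak{N}(E;C_0(X))$ of Lemma \ref{Lem:ccpC}. Since that lemma says this set is norm closed, it suffices to treat the norm-dense family of elements $a=\sum_{t\in F}f_t u_t$ with $f_t\in C_0(Y)$ of compact support and $F\subseteq\Gamma$ finite. For such $a$, apply Lemma \ref{Lem:freet} to the free action $\alpha$, the compact set $\pi(\bigcup_t\supp f_t)\subseteq X$, and the finite set $F\setminus\{1\}$, and take a subordinate partition of unity to obtain $(h_i)_i$ with $h_i\in C_0(X)$, $\sum_i h_i^2\equiv 1$ on that compact set, and $\supp h_i\cap\alpha_t(\supp h_i)=\emptyset$ for $t\in F\setminus\{1\}$. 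A direct computation then gives $\sum_i h_i^\ast a h_i=\sum_t\bigl(\sum_i\overline{h_i}\,\alpha_t(h_i)\bigr)f_t\,u_t$, where equivariance of $\pi$ lets one compute $\beta_t(h_i)$ on $Y$; the disjointness kills every $t\neq 1$ term, while the $t=1$ term equals $\bigl(\sum_i h_i^2\bigr)f_1=E(a)$. Hence $E(a)\in\mathfrak{N}(a;C_0(X))$ for a dense set of $a$, and so for all $a$.

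This is exactly the form needed, because the averaging functions $h_i$ lie in $C_0(X)\subseteq A$. For $a\in A$ each partial sum $\sum_{i\le N}h_i^\ast a h_i$ is a finite combination of products of elements of $A$, hence lies in $A$, and the full series converges in norm; therefore every generator of $\mathfrak{N}(a;C_0(X))$ lies in $A$, and so does its norm limit $E(a)$. This yields $E(A)\subseteq A$. With this in hand the argument closes: for $a\in A$ and $s\in\Gamma$ we have $au_s^\ast\in A$, so the coefficient $E_s(a)=E(au_s^\ast)\in A\cap C_0(Y)=C_0(Z)$; the same multiplier manipulations (again via properness of $\pi$) show $C_0(Z)$ is $\Gamma$-invariant and contains $C_0(X)$. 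Thus every $a\in A$ has all coefficients in $C_0(Z)$, and here the AP enters decisively: by Proposition 3.4 of \cite{Suz17} the coefficients determine membership in the intermediate crossed product, whence $a\in C_0(Z)\rc\Gamma$ and $A\subseteq C_0(Z)\rc\Gamma$. The reverse inclusion is elementary, since $C_0(Z)\subseteq A$ and the $u_s$ multiply $A$ into itself force $C_0(Z)\rc\Gamma\subseteq A$; combining gives $A=C_0(Z)\rc\Gamma$.

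The hard part will be the expectation-invariance step $E(A)\subseteq A$: carrying out the averaging cleanly in the non-unital setting, arranging that the averaging functions are drawn from $C_0(X)\subseteq A$ (this is precisely where freeness of $\alpha$ is unavoidable and cannot be relaxed to topological freeness), and invoking Lemma \ref{Lem:ccpC} to legitimately reduce to finitely supported elements. Everything after that is bookkeeping, with the AP used only through the cited coefficient-position result; note that freeness of $\beta$ is in fact forced by freeness of $\alpha$ and equivariance of $\pi$, since a $\beta$-fixed point would map to an $\alpha$-fixed point.
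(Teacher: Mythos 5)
Your proposal is correct and follows essentially the same route as the paper: the core is identical (averaging a finitely supported element by a $C_0(X)$-valued partition of unity subordinate to the cover from Lemma \ref{Lem:freet} to show $E(a)\in\mathfrak{N}(a;C_0(X))$, extending to all of $C_0(Y)\rc\Gamma$ by the norm-closedness of Lemma \ref{Lem:ccpC}, concluding $E(A)\subset A$ since the averaging elements lie in $C_0(X)\subset A$, and then invoking Proposition 3.4 of \cite{Suz17} via the AP to get $A=(A\cap C_0(Y))\rc\Gamma$), with your extra bookkeeping on the inverse map and multiplier manipulations being exactly what the paper leaves implicit. One cosmetic remark: injectivity of the reduced crossed product functor on $\Gamma$-subalgebras holds for arbitrary discrete groups, so your appeal to exactness (via the AP) for well-definedness of the forward map is superfluous.
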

\begin{proof}
Let $a \in C_{\rm c}(Y) \rtimes_{\rm alg} \Gamma:= {\rm span}\{f u_s : f\in C_{\rm c}(Y), s\in \Gamma\}$.
We first show that $E(a)\in \mathfrak{N}(a; C_0(X))$.
Define \[F:= \{ s\in \Gamma \setminus\{e\}: E_s(a)\neq 0\}.\]
By the choice of $a$, $F$ is finite.
By Lemma \ref{Lem:freet}, one can choose a finite open cover $U_1, \ldots, U_n$ of $\pi(\supp(E(a))) \subset X$
satisfying $\alpha_s(U_i) \cap U_i=\emptyset$ for all $i$ and $s\in F$.
Choose non-negative functions $f_1, \ldots, f_n$ in $C_{\rm c}(X)$
satisfying $\supp(f_i) \subset U_i$ for each $i$, $\| \sum_{i=1}^n f_i^2\|\leq 1$, and
\[\sum_{i=1}^n f_i^2(y)=1 {\rm~for~all~}y \in \supp(E(a)).\]
By the choice of $U_i$, for any $s\in F$ and any $i$, we have
\[f_i u_s f_i= f_i \alpha_s(f_i) u_s=0.\]
This yields \[\sum_{i=1}^n f_i a f_i=\sum_{i=1}^n f_i^2E(a)= E(a)\]
as desired.

By Lemma \ref{Lem:ccpC}, we have
$\mathfrak{N}(E; C_0(X)) = C_0(Y) \rc \Gamma$.
Consequently
$E(a) \in \Cso(C_0(X), a)$ for all $a \in C_0(Y)\rc \Gamma$.
In particular, for any intermediate \Cs-algebra
$A$ of $C_0(X) \rc \Gamma \subset C_0(Y) \rc \Gamma$,
we have $E(A) \subset A$.
(We remark that we do not use the assumption on $\Gamma$ for this conclusion.)
Proposition 3.4 in \cite{Suz17} then yields $A=E(A) \rc \Gamma$.
\end{proof}
\begin{Rem}\label{Rem:nonsa}
It would be worth remarking
that the arguments in Theorem \ref{Thm:MainC}
does not use self-adjointness.
Thus the map
\[\mathcal{A} \mapsto \mathcal{A}\rc \Gamma:=\overline{\rm span}\{au_s:a\in \mathcal{A}, s\in \Gamma\}\]
gives a lattice isomorphism between the lattice of intermediate norm closed $\Gamma$-algebras of $C_0(X) \subset C_0(Y)$
and that of intermediate norm closed algebras of $C_0(X) \rc \Gamma \subset C_0(Y) \rc \Gamma$.
\end{Rem}
\begin{Rem}
The assumption on $\Gamma$ in Theorem \ref{Thm:MainC}
is not necessary when every intermediate extension $Z$ of $\pi$
admits a $\Gamma$-conditional expectation
$C_0(Y) \rightarrow C_0(Z)$.
Indeed, by Exercise 4.1.4 of \cite{BO},
any $\Gamma$-conditional expectation
$E\colon C_0(Y)\rightarrow C_0(Z)$ extends to
a conditional expectation
$\tilde{E}\colon C_0(Y)\rc \Gamma \rightarrow C_0(Z) \rc \Gamma$
satisfying $\tilde{E}(f u_s)=E(f)u_s$ for all $f\in C_0(Y)$ and $s\in \Gamma$.
This implies
\[C_0(Z)\rc \Gamma = \{ a\in C_0(Y) \rc \Gamma: E_s(a) \in C_0(Z) {\rm~for~all~}s\in \Gamma\}.\]
\end{Rem}

It is interesting to compare Theorem \ref{Thm:MainC} with the ideal structure
results of the reduced crossed product \Cs-algebras \cite{KT}, \cite{AS} (see also \cite{KK}, \cite{BKKO}).
Unlike these results, our argument requires
the freeness rather than just the topological freeness for actions.
(Recall that an action $\alpha \colon \Gamma \curvearrowright X$ is said to be topologically free if
for any $\Gamma$-invariant closed subset $Y$ of $X$
and for any $s\in \Gamma \setminus \{e\}$,
the set of fixed points of $\alpha_s$ in $Y$ has empty interior in $Y$.)
In fact, as the following proposition shows, the freeness assumption is essential
in Theorem \ref{Thm:MainC}.
\begin{Prop}\label{Prop:topfree}
Let $\Gamma$ be a discrete group.
Let $\alpha \colon \Gamma \curvearrowright X$ be a non-free action
on a locally compact space.
Then there is a free action $\beta \colon \Gamma \curvearrowright Y$
and a proper extension $\pi \colon Y \rightarrow X$ satisfying the following property.
There is an intermediate \Cs-algebra $A$ of $C_0(X) \rc \Gamma \subset C_0(Y) \rc \Gamma$ not of the form $C_0(Z) \rc \Gamma$
for any intermediate extension $Z$ of $\pi$.
\end{Prop}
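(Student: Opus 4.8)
The plan is to break the proof of Theorem \ref{Thm:MainC} exactly where freeness is used: at a fixed point, Lemma \ref{Lem:freet} cannot separate $x_0$ from $\alpha_{s_0}(x_0)=x_0$, so the averaging identity $\sum_i f_i a f_i=E(a)$ is unavailable. Since $\alpha$ is non-free, I first fix $x_0\in X$ and $s_0\in\Gamma\setminus\{e\}$ with $\alpha_{s_0}(x_0)=x_0$, and let $\Gamma_0:=\{g:\alpha_g(x_0)=x_0\}\supseteq\langle s_0\rangle$ be the nontrivial stabilizer. For the free extension I take a free action $\gamma\colon\Gamma\curvearrowright W$ on a compact space (every discrete group has one), set $Y:=X\times W$ with the diagonal action and $\pi$ the first projection; then $\beta$ is free, $\pi$ is proper and equivariant, and the fibre $\pi^{-1}(x_0)\cong W$ carries the free $\Gamma_0$-action $\gamma|_{\Gamma_0}$. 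I would choose $W$ so that this fibre contains at least two disjoint free $\Gamma_0$-pieces each homeomorphic to $\beta\Gamma_0$ (e.g.\ $W=\beta\Gamma$, whose fibre decomposes over the right $\Gamma_0$-cosets, or $W=\beta\Gamma\sqcup\beta\Gamma$ when $\Gamma_0=\Gamma$), so that on each piece $C(\cdot)\rc\Gamma_0\cong\ell^\infty(\Gamma_0)\rc\Gamma_0$ with Cartan $\ell^\infty(\Gamma_0)$.

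The next step is an AP-free criterion for detecting non-crossed-product intermediate algebras: for every intermediate extension $Z$ one has $E(C_0(Z)\rc\Gamma)=C_0(Z)\subseteq C_0(Z)\rc\Gamma$, so any intermediate \Cs-algebra $A$ of $C_0(X)\rc\Gamma\subseteq C_0(Y)\rc\Gamma$ with $E(A)\not\subseteq A$ is automatically not of the form $C_0(Z)\rc\Gamma$. Hence it suffices to produce an intermediate $A$ on which $E$ escapes.

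The engine of the construction lives on the fibre over $x_0$. Localizing the inclusion $C_0(X)\rc\Gamma\subseteq C_0(Y)\rc\Gamma$ at the orbit $\Gamma x_0$ (via Green imprimitivity / induction from $\Gamma_0$) turns it into the diagonal inclusion $\rg(\Gamma_0)\hookrightarrow B\oplus B$, with $B=\ell^\infty(\Gamma_0)\rc\Gamma_0$, Cartan $\ell^\infty(\Gamma_0)\oplus\ell^\infty(\Gamma_0)$, and fibre expectation $E_0$ the compression onto this Cartan. In this model the crossed-product intermediate algebras are the graphs $\{(b,\ad(V)b)\}$ of automorphisms $\ad(V)$ that normalise the Cartan (these commute with $E_0$). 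I would instead take $A_0:=\{(b,\ad(U)b):b\in B\}$ for a unitary $U\in\rg(\Gamma_0)'\cap B$ that does \emph{not} normalise the Cartan. Such $U$ exists because the relative commutant contains the right-regular copy of $\Gamma_0$, in which a non-monomial unitary fails to normalise $\ell^\infty(\Gamma_0)$ (for $\Gamma_0=\mathbb Z/2$, explicitly $U=\tfrac1{\sqrt2}(1+i\rho_{s_0})$; in general $\exp(it(\rho_g+\rho_{g^{-1}}))$, $g\neq e$). Since $\ad(U)$ fixes $\rg(\Gamma_0)$, $A_0$ contains the diagonal copy and is genuinely intermediate; since $\ad(U)$ does not commute with $E_0$, a one-line computation (already visible in the $2\times2$ model $B=M_2$, where $E_0(e_{11},\ad(U)e_{11})\notin A_0$) gives $E_0(A_0)\not\subseteq A_0$. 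Finally I let $A$ be the intermediate \Cs-algebra of all $a\in C_0(Y)\rc\Gamma$ whose value along the fibre over $x_0$ lies in $A_0$, and transfer the fibre computation to conclude $E(A)\not\subseteq A$.

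The main obstacle is making the localization precise and compatible with the globalization. Concretely one must (i) identify the fibre values of $a$ and of $E(a)$ with the induced model $\rg(\Gamma_0)\hookrightarrow B\oplus B$, and verify that imposing the fibrewise condition ``$a|_{x_0}\in A_0$'' defines a genuine intermediate \Cs-algebra rather than collapsing the neighbouring fibres; and (ii) guarantee that the chosen $U$ is a true non-normaliser of the Cartan, which is precisely what forces the free extension $W$ to be large enough that the relative commutant is rich. Everything else — freeness and properness of $(\beta,\pi)$, intermediacy of $A$, and the implication $E(A)\not\subseteq A\Rightarrow A\neq C_0(Z)\rc\Gamma$ — is routine.
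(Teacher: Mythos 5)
Your detection criterion (if $A=C_0(Z)\rc\Gamma$ then $E(A)=C_0(Z)\subseteq A$, so $E(A)\not\subseteq A$ suffices) and your free-ification device ($\beta\Gamma$ as a free factor, diagonal action) are both correct and are exactly what the paper uses. But the engine of your construction has a genuine gap, precisely at the point you yourself flagged as ``the main obstacle'': with $Y=X\times W$ and $\pi$ the first projection there is in general \emph{no} localization at the fibre over $x_0$. The orbit $\Gamma x_0$ need not be closed in $X$ (for a minimal non-free action every orbit is dense, and such actions exist), so restriction to $\Gamma x_0\times W$ is not a $\ast$-homomorphism on $C_0(Y)$, let alone on $C_0(Y)\rc\Gamma$; restricting instead to the closure $\overline{\Gamma x_0}\times W$ destroys your model, since the closure carries extra points with uncontrolled stabilizers and the restricted small algebra is $C(\overline{\Gamma x_0})\rc\Gamma$, which is far larger than a Morita representative of $\rg(\Gamma_0)$. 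Consequently the set of all $a$ ``whose value along the fibre over $x_0$ lies in $A_0$'' is not a well-defined \Cs-subalgebra, and the claimed localized model $\rg(\Gamma_0)\hookrightarrow B\oplus B$ is not what any honest localization produces. (Even in the favourable case, Green's imprimitivity applies to $c_0(\Gamma/\Gamma_0)$-coefficients, and a closed \emph{discrete} infinite orbit is impossible when $X$ is compact; so the hypothesis your model silently requires fails for general non-free $\alpha$.)

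The paper's proof repairs exactly this defect by changing $Y$ rather than trying to localize inside a product: it sets $\Lambda=\langle s_0\rangle$, lets $p\colon\Gamma/\Lambda\to X$ be the orbit map through the fixed point, and defines $C_0(Y_0)\subset C_0(X)\oplus\ell^\infty(\Gamma/\Lambda)$ as the algebra generated by $\iota(f)=(f,f\circ p)$ and $0\oplus c_0(\Gamma/\Lambda)$. This appends a \emph{discrete open} invariant copy of $\Gamma/\Lambda$ to $Y_0$ (glued to the orbit closure only at infinity), so that $c_0(\Gamma/\Lambda)$ is a genuine $\Gamma$-ideal and no homomorphic restriction is ever needed; freeness is then restored by $Y=\beta\Gamma\times Y_0$, as in your proposal. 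With the ideal in hand, no unitary or graph gadget is required: since the corner $\delta_\Lambda\,[c_0(\Gamma/\Lambda)\rc\Gamma]\,\delta_\Lambda\cong\rg(\Lambda)$ is non-simple, one picks a proper ideal $I$ of $c_0(\Gamma/\Lambda)\rc\Gamma$; $\Gamma$-simplicity of $c_0(\Gamma/\Lambda)$ forces $I\cap c_0(\Gamma/\Lambda)=0$ and $E(I)=c_0(\Gamma/\Lambda)$, and $A:=C_0(X)\rc\Gamma+I$ is an intermediate algebra (the small crossed product idealizes $I$) with $A\cap C_0(Y)=C_0(X)\subsetneq E(A)$ --- your criterion, verified. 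Note the paper needs only the cyclic group $\Lambda$, not the full stabilizer $\Gamma_0$, and no analysis of Cartan normalizers. If you wished to salvage your twisted-graph $A_0$, you would still first have to append a discrete copy of the homogeneous space exactly as above, at which point the ideal trick is strictly shorter.
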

\begin{proof}
Take $s\in \Gamma \setminus \{e\}$ and $x\in X$
satisfying $sx=x$. Let $\Lambda$ be the subgroup of $\Gamma$ generated by $s$.
Let $p \colon \Gamma / \Lambda \rightarrow X$
be the map given by $p(s\Lambda):=sx$.
Define
\[\iota \colon C_0(X) \rightarrow C_0(X) \oplus \ell^\infty(\Gamma /\Lambda)\]
to be $\iota(f):= (f, f \circ p)$.
Let $C_0(Y_0)$ denote the \Cs-subalgebra of $C_0(X) \oplus \ell^\infty (\Gamma/ \Lambda)$
generated by $\iota(C_0(X))$ and $0 \oplus c_0(\Gamma/ \Lambda)$.
We identify $C_0(X)$, $c_0(\Gamma /\Lambda)$ with these $\Gamma$-\Cs-subalgebras of $C_0(Y_0)$ (not with $C_0(X) \oplus 0$) respectively.
As $C_0(X) C_0(Y_0)$ is norm dense in $C_0(Y_0)$,
the inclusion $C_0(X) \subset C_0(Y_0)$
induces a proper extension $q \colon Y_0 \rightarrow X$.
Let $\Gamma \curvearrowright \beta \Gamma$ be the left translation action 
on the Stone--\v{C}ech compactification.
Let $\beta \colon \Gamma \curvearrowright Y:= \beta \Gamma \times Y_0$ denote the diagonal action.
Note that $\beta$ is free (see the proof of Lemma 2.3 in \cite{SuzMin} for instance).
Denote by $r \colon Y \rightarrow Y_0$ the projection onto the second coordinate.
Put $\pi := q \circ r \colon Y \rightarrow X$.

We show that $\pi$ is the desired extension.
To see this, observe that $c_0(\Gamma/\Lambda) \rc \Gamma$ is not simple.
(Indeed $\delta_\Lambda [c_0(\Gamma/\Lambda) \rc \Gamma] \delta_{\Lambda}
\cong \rg(\Lambda)$, where $\delta_\Lambda \in c_0(\Gamma/\Lambda)$ denotes the Dirac function
at $\Lambda \in \Gamma / \Lambda$.)
Choose a proper ideal $I$ of $c_0(\Gamma/\Lambda) \rc \Gamma$.
Note that $I \cap c_0(\Gamma/\Lambda)=0$, $E(I)=c_0(\Gamma/\Lambda)$
as $c_0(\Gamma/\Lambda)$ has no proper $\Gamma$-ideal.
Now define
\[A := C_0(X) \rc \Gamma + I \subset C_0(Y) \rc \Gamma.\]
As $C_0(X) \rc \Gamma$ is contained in the idealizer of $I$, $A$ is a \Cs-algebra.
Also,
\[A \cap C_0(Y) = C_0(X) \subsetneq C_0(X) + c_0(\Gamma/\Lambda) =E(A).\]
Therefore $A$ is not of the form $C_0(Z) \rc \Gamma$ for any intermediate extension $Z$ of $\pi$.
\end{proof}

We also remark that Theorem \ref{Thm:MainC} fails for non-exact groups
(\cite{Gro}, \cite{Osa14}).
\begin{Prop}\label{Prop:nonexact}
Let $\Gamma$ be a countable non-exact group.
Then there are compact spaces $X, Y$, free actions $\alpha \colon \Gamma \curvearrowright X$, $\beta \colon \Gamma \curvearrowright Y$, and a factor map $\pi \colon Y \rightarrow X$ satisfying the following property.
There is an intermediate \Cs-algebra of the inclusion
$C(X)\rc \Gamma \subset C(Y) \rc \Gamma$
not of the form $C(Z) \rc \Gamma$ for any
intermediate extension $Z$ of $\pi$.
\end{Prop}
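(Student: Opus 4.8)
The plan is to locate the failure in the single half of the correspondence that genuinely needs the AP. By the first half of the proof of Theorem~\ref{Thm:MainC}---which, as remarked there, uses only freeness---every intermediate \Cs-algebra $A$ of a free inclusion satisfies $E(A)\subset A$, so that $A\cap C(Y)=E(A)$; consequently $A$ is of the form $C(Z)\rc\Gamma$ if and only if $A=E(A)\rc\Gamma$. I must therefore produce a free inclusion and an intermediate $A$ with $A\supsetneq E(A)\rc\Gamma$, i.e.\ an element all of whose Fourier coefficients lie in the coefficient algebra but which itself escapes the corresponding crossed product. Non-exactness supplies exactly such an element: since $\Gamma$ is non-exact, the translation action $\Gamma\curvearrowright\beta\Gamma$ on the Stone--\v{C}ech compactification is not amenable, so (a standard characterization of exactness, cf.~Section~5 of \cite{BO}) the sequence obtained from $0\to c_0(\Gamma)\to C(\beta\Gamma)\to C(\partial\Gamma)\to0$, where $\partial\Gamma:=\beta\Gamma\setminus\Gamma$, fails to be exact after applying $\rc\Gamma$. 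Fix $a\in C(\beta\Gamma)\rc\Gamma$ with $E_s(a)\in c_0(\Gamma)$ for all $s$ but $a\notin c_0(\Gamma)\rc\Gamma=\mathcal{K}(\ell^2\Gamma)$.

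Next I would build free actions carrying this witness. The obstruction to using $\beta\Gamma$ directly is that the only natural quotient absorbing $c_0(\Gamma)$ is the one-point compactification $\Gamma^+=\Gamma\cup\{\infty\}$, whose action fixes $\infty$; I remove this fixed point by a product with a free action, exactly as in the proof of Proposition~\ref{Prop:topfree}. Set $Y:=\beta\Gamma\times\beta\Gamma$ and $X:=\beta\Gamma\times\Gamma^+$ with the diagonal actions, both free because the first coordinate is free, and let $\pi:=\id\times q\colon Y\to X$, where $q\colon\beta\Gamma\to\Gamma^+$ collapses $\partial\Gamma$ to $\infty$; this $\pi$ is an equivariant quotient map, hence (the spaces being compact) a proper factor map. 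Under the identification $C(X)=C(\beta\Gamma)\otimes(c_0(\Gamma)+\mathbb{C}1)\subset C(\beta\Gamma)\otimes C(\beta\Gamma)=C(Y)$, I embed the witness through the second coordinate as $\hat a:=1\otimes a\in C(Y)\rc\Gamma$; then $E_s(\hat a)=1\otimes E_s(a)\in 1\otimes c_0(\Gamma)\subset C(X)$ for every $s$.

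I would then package the counterexample as $A:=C(X)\rc\Gamma+\tilde J$, where $\tilde J:=\ker\bigl(C(Y)\rc\Gamma\to C(\beta\Gamma\times\partial\Gamma)\rc\Gamma\bigr)=\{\,x:E_s(x)\in C(\beta\Gamma)\otimes c_0(\Gamma)\text{ for all }s\,\}$ is a genuine closed ideal of $C(Y)\rc\Gamma$. Being the sum of a \Cs-subalgebra and an ideal of the ambient algebra, $A$ is an intermediate \Cs-algebra. Since $E(\tilde J)=C(\beta\Gamma)\otimes c_0(\Gamma)\subset C(X)$, linearity and continuity of $E$ give $E(A)=C(X)$. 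Hence, were $A=C(Z)\rc\Gamma$ for some intermediate extension $Z$, applying $E$ would force $C(Z)=E(A)=C(X)$ and then $A=C(X)\rc\Gamma$, i.e.\ $\tilde J\subset C(X)\rc\Gamma$. As $\hat a\in\tilde J$, the entire statement reduces to the single claim $\hat a\notin C(X)\rc\Gamma$.

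This last claim is the main obstacle, and it is where non-exactness is used in an essential way. The difficulty is structural: every quotient map collapsing the relevant copy of $c_0(\Gamma)$ annihilates $\hat a$ (all of its coefficients lie in the collapsed ideal), so the non-membership is invisible to quotients and lives entirely in the ideal-theoretic defect that measures non-exactness. Moreover there is no $\Gamma$-equivariant conditional expectation $C(Y)\to C(X)$ to test against---one would need an invariant state on $C(\partial\Gamma)$, impossible for non-amenable $\Gamma$---and the free first coordinate blocks any attempt to recover $a$ from $\hat a$ by an equivariant slice. I would therefore argue directly, transporting to the uniform Roe algebra picture $C(\beta\Gamma)\rc\Gamma\cong C^\ast_{\mathrm u}(\Gamma)$, in which $a$ is a non-compact operator whose diagonals all lie in $c_0(\Gamma)$, and detecting this surviving non-compactness inside $C(Y)\rc\Gamma$ by a Fubini-type slice adapted to the free coordinate; concretely, I expect the ideal $\bigl(C(\beta\Gamma)\otimes c_0(\Gamma)\bigr)\rc\Gamma\cong C(\beta\Gamma)\otimes\mathcal{K}(\ell^2\Gamma)$, together with $a\notin\mathcal{K}(\ell^2\Gamma)$, to obstruct the factorization $\hat a\in C(X)\rc\Gamma$. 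Once $\hat a\notin C(X)\rc\Gamma$ is established, $A$ is the desired intermediate \Cs-algebra, and the remaining points (freeness of the diagonal actions, equivariance and properness of $\pi$, and closedness of $A$) are routine.
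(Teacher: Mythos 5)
Your reduction and packaging are sound, and they genuinely differ from the paper's route: the averaging half of the proof of Theorem~\ref{Thm:MainC} indeed uses only freeness, so every intermediate $A$ satisfies $E(A)\subset A$ and is of crossed-product form iff $A=E(A)\rc\Gamma$; the spaces $Y=\beta\Gamma\times\beta\Gamma$, $X=\beta\Gamma\times\Gamma^+$ are legitimately free, $\tilde J$ is a genuine closed ideal (equivariant quotients always induce maps on reduced crossed products, even without exactness), and $E(A)=C(X)$ follows as you say. But your write-up stops exactly at its crux: you only state that you \emph{expect} the ideal $(C(\beta\Gamma)\otimes c_0(\Gamma))\rc\Gamma\cong C(\beta\Gamma)\otimes\mathcal{K}(\ell^2\Gamma)$ to obstruct $\hat a\in C(X)\rc\Gamma$, with no argument. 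That claim is the entire content of the proposition---proving that a specific element whose coefficients lie in $C(X)$ is \emph{not} in $C(X)\rc\Gamma$ is precisely the kind of statement that is delicate for non-exact $\Gamma$---so as written this is a genuine gap. It is, however, fillable along the lines you gesture at: since $C(X)=C(\beta\Gamma)\otimes(c_0(\Gamma)+\mathbb{C}1)$, one has the closed decomposition $C(X)\rc\Gamma=J_1+D_1$ (ideal plus \Cs-subalgebra) with $J_1:=(C(\beta\Gamma)\otimes c_0(\Gamma))\rc\Gamma$ and $D_1:=(C(\beta\Gamma)\otimes\mathbb{C}1)\rc\Gamma$; writing $\hat a=j+d$ accordingly, for every $s$ one gets $E_s(d)=E_s(\hat a)-E_s(j)\in(C(\beta\Gamma)\otimes\mathbb{C}1)\cap(C(\beta\Gamma)\otimes c_0(\Gamma))=0$, so $d=0$ by faithfulness of $E$ and $\hat a\in J_1$. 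Untwisting the diagonal action on $c_0(\Gamma,C(\beta\Gamma))$ by $f(t)\mapsto\beta_t^{-1}(f(t))$ trivializes the action on the $C(\beta\Gamma)$-leg while fixing the coefficients $1\otimes E_s(a)$ of $\hat a$; any character of $C(\beta\Gamma)$ then slices \emph{equivariantly} to a $\ast$-homomorphism $J_1\rightarrow c_0(\Gamma)\rc\Gamma=\mathcal{K}(\ell^2\Gamma)$ sending $\hat a$ to an element with the same coefficients as $a$, forcing $a\in\mathcal{K}(\ell^2\Gamma)$, a contradiction. Separately, your existence claim for $a$ is mis-attributed: that non-exactness yields a non-compact ghost is not a formal ``standard characterization of exactness'' but is the Roe--Willett theorem \cite{RW}, combined with Theorem 5.5.7 of \cite{BO} to pass from non-exactness to the failure of property A.

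It is instructive that the paper's own proof sidesteps exactly the step you left open. There $Y=\beta\Gamma$ itself, $C(X)$ is a \emph{separable} unital free subsystem of $\ell^\infty(\Gamma)$ with $c_0(\Gamma)\subset C(X)$ (Lemma 2.3 of \cite{SuzMin}), and the witness is obtained by pure cardinality: the ghost ideal $G^\ast(\Gamma)$ is non-separable (Remark 4.3 of \cite{RW}) while $C(X)\rc\Gamma$ is separable, so some $a\in G^\ast(\Gamma)\setminus C(X)\rc\Gamma$ exists and no concrete non-membership ever has to be verified. Then $A:=\Cso(a,C(X),\Gamma)$ suffices because $G^\ast(\Gamma)$ is an ideal of $C(\beta\Gamma)\rc\Gamma$, whence $A\subset C(X)\rc\Gamma+G^\ast(\Gamma)$ and $E(A)=C(X)$ while $A\neq C(X)\rc\Gamma$. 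Once repaired as above, your route buys an explicit witness and an explicit ideal-theoretic picture at the cost of the stabilization analysis; the paper's buys brevity at the cost of a non-constructive choice of $a$.
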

\begin{proof}
Take a unital separable $\Gamma$-\Cs-subalgebra
$C(X)$ of $\ell^\infty(\Gamma)=C(\beta\Gamma)$ whose associated action
$\Gamma \curvearrowright X$ is free (see Lemma 2.3 of \cite{SuzMin}).
By replacing $C(X)$ by $C(X)+ c_0(\Gamma)$ if necessary,
we may assume that $c_0(\Gamma) \subset C(X)$.
Denote by $\pi \colon \beta\Gamma \rightarrow X$ the factor map associated to the inclusion $C(X) \subset C(\beta\Gamma)$.
Define
\[G^\ast(\Gamma):=\{ a\in C(\beta\Gamma)\rc \Gamma:E_s(a)\in c_0(\Gamma){\rm~for~all~}s\in \Gamma\}.\]
Note that this ideal corresponds to the ideal of all Ghost operators (\cite{RW}, Definition 1.2) in the uniform Roe algebra $C_u^\ast(\Gamma)$ under the canonical isomorphism $C_u^\ast(\Gamma) \cong C(\beta\Gamma)\rc \Gamma$ (see \cite{BO}, Remark 5.5.4).
Since $\Gamma$ is non-exact, by Theorem 5.5.7 of \cite{BO},
it does not have property A (with respect to a proper left invariant metric).
Therefore, by Remark 4.3 of \cite{RW},
$G^\ast(\Gamma)$ is non-separable.
Since $C(X) \rc \Gamma$ is separable, one can choose an element
\[a\in G^\ast(\Gamma) \setminus C(X) \rc \Gamma.\]
We define the intermediate \Cs-algebra $A$ of
$C(X)\rc \Gamma \subset C(\beta\Gamma)\rc \Gamma$ to be
\[A:=\Cso(a, C(X), \Gamma).\]
Then, by the choice of $a$,
we have $E(A) = C(X)$, $A \neq C(X) \rc \Gamma$.
Therefore $A$ is not of the form $C(Z) \rc \Gamma$ for any
intermediate extension $Z$ of $\pi$.

\end{proof}
\section{Intermediate von Neumann algebras and measurable dynamical systems}\label{Sec:W}
In this section, we prove the Main Theorem for von Neumann algebras.
The proof follows basically the same line as that of Theorem \ref{Thm:MainC}.
However, since the $\sigma$-strong topology is more delicate than the norm topology,
we need slightly more arguments.

As usual, we only consider von Neumann algebras with separable predual (though most results have an appropriate extension to the general case).
All inclusions of von Neumann algebras are assumed to be unital.
The basic facts and notations on von Neumann algebras can be found in \cite{Tak1}.
For a family $S_1, \ldots, S_n$ of subsets or elements in a von Neumann algebra $M$,
denote by $\Wso(S_1, \ldots, S_n)$ the von Neumann subalgebra of $M$ generated by
$S_1, \ldots, S_n$.
Let $\Gamma$ be a discrete group.
Similar to the \Cs-algebra case,
for a $\Gamma$-von Neumann algebra $M$,
denote by $E, E_s; s\in \Gamma$ the canonical conditional expectation
and the $s$-th coefficient map on $M \vnc \Gamma$ respectively.

We refer the reader to \cite{Gla} and \cite{Zim}
for basic facts and definitions of ergodic theory.
In measurable spaces, without stating,
\begin{itemize}
\item considered subsets are assumed to be measurable,
\item we ignore null sets
(for instance, the symbol `$A\subset B$' means that $A \setminus B$ is a null set),
\item all actions on a standard Borel space are assumed to be non-singular,
\item all factor maps $\pi \colon (Y, \nu) \rightarrow (X, \mu)$
are assumed to preserve the measure class
(that is, $\pi_\ast(\nu)$ and $\mu$ share null sets).
\end{itemize}

In the measurable setting, we define the notion of intermediate extensions and their equivalence analogous to the topological setting (see the beginning of Section\ \ref{Sec:C}).
In particular, for an extension $\pi \colon (Y, \nu) \rightarrow (X, \mu)$,
there is a bijective correspondence
between the set of intermediate $\Gamma$-von Neumann algebras
of $L^\infty(X) \subset L^\infty(Y)$
and the set of equivalence classes of intermediate extensions of $\pi$.
With this correspondence, we introduce the lattice structure on
the set of equivalence classes of intermediate extensions of $\pi$.

We first recall the following standard lemma.
\begin{Lem}\label{Lem:efree}
Let $\alpha \colon \Gamma \curvearrowright (X, \mu)$
be an essentially free action of a countable group $\Gamma$ on a standard probability space $(X, \mu)$.
Then, for any finite subset $F$ of $\Gamma\setminus \{e\}$,
one can choose a partition
$X= X_1 \sqcup \cdots \sqcup X_{3^{|F|}}$ of $X$
satisfying $X_i \cap \alpha_s(X_i)=\emptyset$ for all $s\in F$ and $i= 1, \ldots, 3^{|F|}$.
\end{Lem}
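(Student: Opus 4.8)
The plan is to reduce the simultaneous statement over all of $F$ to colouring one element at a time and then take a common refinement. For each $s \in F$ I would first produce a measurable map $c_s \colon X \rightarrow \{1,2,3\}$ (defined modulo null sets) with the single property that $c_s(x) \neq c_s(\alpha_s(x))$ for a.e.\ $x$. Granting these, set $c \colon X \rightarrow \{1,2,3\}^F$, $c(x) := (c_s(x))_{s \in F}$, and let $X_1, \ldots, X_{3^{|F|}}$ be the preimages of the $3^{|F|}$ possible values, which partition $X$. If $y \in X_i \cap \alpha_s(X_i)$ for some $s \in F$, then $y$ and $x := \alpha_s^{-1}(y)$ both lie in $X_i$, so $c(x) = c(y)$, and in particular $c_s(x) = c_s(\alpha_s(x))$, contradicting the defining property of $c_s$. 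Hence $X_i \cap \alpha_s(X_i) = \emptyset$ mod null for every $i$ and every $s \in F$, as required. All the content is therefore in the single-transformation colouring.

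For a fixed $s \in F$, I would analyse the orbits of $\langle s \rangle \curvearrowright X$. Essential freeness yields $\mu(\{x : \alpha_{s^k}(x) = x\}) = 0$ for every $k \geq 1$ with $s^k \neq e$. Split $X$ (up to null sets) into the $\langle s \rangle$-invariant measurable pieces $X^{s}_{\mathrm{fin}}$ and $X^{s}_{\mathrm{inf}}$ on which the orbit is finite, resp.\ infinite. On $X^{s}_{\mathrm{fin}}$ the orbit equivalence relation has finite classes, hence is smooth; fixing a Borel total order on $X$ and selecting the least point $x_0$ of each orbit gives a measurable position function $j(x)$ with $x = \alpha_{s^{j(x)}}(x_0)$, and colouring by the parity of $j$ while overriding the last point of each odd-length cycle with colour $3$ properly colours every cycle. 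On $X^{s}_{\mathrm{inf}}$ the orbits are copies of $\mathbb{Z}$, and here I would invoke the marker lemma: for the aperiodic automorphism $\alpha_s$ there is a Borel complete section $S \subseteq X^{s}_{\mathrm{inf}}$ that is $2$-discrete, i.e.\ $S \cap \alpha_s(S) = \emptyset$, meeting every orbit infinitely often in both directions. Letting $d(x) \geq 0$ be the a.e.\ finite, measurable backward distance to $S$ along the orbit, I colour $x$ by $1$ if $d(x)$ is positive and even, by $2$ if $d(x)$ is odd, and by $3$ if $d(x) = 0$. Since consecutive markers lie at distance at least two, no two colour-$3$ points are adjacent, parities alternate strictly between markers, and each marker is flanked by points coloured in $\{1,2\}$; thus $c_s(x) \neq c_s(\alpha_s(x))$ throughout $X^{s}_{\mathrm{inf}}$, and combining the two pieces produces $c_s$.

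I expect the marker construction on $X^{s}_{\mathrm{inf}}$ to be the only genuine obstacle: the existence of a $2$-discrete Borel complete section for an aperiodic non-singular automorphism, and the checks that the induced distance is measurable and a.e.\ finite. This is exactly the point where three colours become unavoidable, since the parity of a bi-infinite orbit cannot be chosen measurably and no measurable $2$-colouring need exist. One may instead cite the Kechris--Solecki--Todorcevic bound that the Borel chromatic number of the graph generated by a single Borel automorphism is at most $3$, which encapsulates precisely this step; everything else, namely the product colouring and the parity bookkeeping on finite cycles, is routine.
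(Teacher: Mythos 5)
Your proof is correct, and the outer layer coincides with the paper's: the paper also disposes of general $F$ in one line (``it suffices to show the claim for singletons''), which is exactly your product colouring $c=(c_s)_{s\in F}$ with $3^{|F|}$ colour classes. Where you diverge is the single-transformation step. The paper never decomposes into orbits at all: working directly in the measure algebra, it considers $\mathfrak{D}_s=\{A\subset X: A\cap\alpha_s(A)=\emptyset\}$, produces a maximal element $A$ by an exhaustion argument (a chain is dominated by an increasing sequence realizing the supremum of the measures, so Zorn applies), observes that maximality forces $A\cup\alpha_s(A)\cup\alpha_s^{-1}(A)=X$, and then checks that $X_1:=A$, $X_2:=\alpha_s(A)\setminus A$, and the remainder (contained in $\alpha_s^{-1}(A)$) already form the desired $3$-colouring. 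Your maximal $2$-discrete complete section is precisely the Borel-category avatar of this maximal $A$ --- indeed, with a \emph{maximal} $2$-discrete $S$ your backward distance satisfies $d\in\{0,1,2\}$ and your three colour classes become $S$, $\alpha_s(S)\setminus S$, and the rest, i.e.\ the paper's partition --- so the paper in effect collapses all of your marker and parity bookkeeping (smoothness on finite orbits, the selector, the distance function) into one covering observation. What your route buys is strictly more: via the marker lemma or the Kechris--Solecki--Todorcevic bound you get a genuinely Borel colouring of the graph of $\alpha_s$, valid for any aperiodic Borel automorphism, and your parity discussion makes transparent why $3$ colours are optimal. What the paper's route buys is economy: a half-page, self-contained argument using no descriptive-set-theoretic machinery, at the cost of producing the partition only modulo null sets --- which is all the lemma asserts.
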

\begin{proof}
Obviously it suffices to show the claim for singletons $F$.
Let $s\in \Gamma \setminus \{e\}$ be given.
Set
\[\mathfrak{D}_s:= \{A \subset X: A \cap \alpha_s(A) = \emptyset\}.\]
Observe that by essential freeness,
any non-null set $A \subset X$
contains a non-null set in $\mathfrak{D}_s$.
We show that $\mathfrak{D}_s$ contains a maximal element
with respect to the inclusion order.
To see this, take any chain $\mathcal{C}$ in $\mathfrak{D}_s$.
Then there is an increasing sequence $(A_n)_n$ in $\mathcal{C}$
with $\lim_{n \rightarrow \infty} \mu(A_{n}) = \sup_{A \in \mathcal{C}} \mu(A)$.
Now $A = \bigcup_{n=1}^\infty A_{n}$
gives an upper bound of $\mathcal{C}$ in $\mathfrak{D}_s$.
By Zorn's lemma,
one can find a maximal element $A$ in $\mathfrak{D}_s$.
We claim that $\alpha_s(A) \cup A \cup \alpha_s^{-1}(A) =X$.
Indeed, if it fails, one can choose a non-null subset $B$
of $X \setminus (\alpha_s(A) \cup A \cup \alpha_s^{-1}(A))$ contained in $\mathfrak{D}_s$.
Then $B \cup A$ gives an element of $\mathfrak{D}_s$
larger than $A$, a contradiction.
Thus $\alpha_s(A) \cup A \cup \alpha_s^{-1}(A)=X$.
Now $X_1:=A, X_2 := \alpha_s(A) \setminus X_1, X_3 := \alpha_s^2(A) \setminus(X_1 \cup X_2)$
gives the desired partition.
\end{proof}

For a faithful normal state $\varphi$ on a von Neumann algebra $M$,
define the norm $\| \cdot \|_\varphi$ on $M$
by \[\| a\|_\varphi:= \varphi (a^\ast a)^{\frac{1}{2}}{\rm~for~}a\in M.\] Obviously $\| \cdot \|_\varphi$
is continuous with respect to the $\sigma$-strong topology.
Moreover $\| \cdot \|_\varphi$ induces the $\sigma$-strong topology
on any (operator norm) bounded subset of $M$.
This norm is convenient (and will be used) to estimate the $\sigma$-strong convergence of bounded nets in $M$.
The set
\[M_\varphi:= \{ a\in M: \varphi(ab)= \varphi(ba) {\rm\ for\ all\ } b\in M\}\]
 is called the centralizer of $\varphi$.
Obviously $M_\varphi$
is a finite von Neumann subalgebra of $M$.

Let $(a_i)_{i \in I}$ be a bounded family in $M$ with $a_i^\ast a_ j = a_i a_j^\ast=0$ for all $i \neq j$.
Then the series of $(a_i)_{i \in I}$ $\sigma$-strongly converges in $M$.
We denote it by $\sum_{i\in I} a_i$.
\begin{Lem}\label{Lem:ucpW}
Let $M$ be a von Neumann algebra with a faithful normal state $\varphi$.
Let $\Phi \colon M\rightarrow M$ be a normal unital completely positive map.
Let $P$ be a set of projections in $M_\varphi$.
Set
\[P^{\mathbb{N}, \perp}:=\left\{(p_i)_{i=1}^\infty \in P^{\mathbb{N}}: p_i \perp p_j {\rm~for~}i \neq j\right\}.\] 
For $a\in M$, define
\[\mathfrak{S}(a; P):=\stcl\left\{\sum_{i=1}^\infty p_i a p_i \colon (p_i)_{i=1}^\infty \in P^{\mathbb{N}, \perp}\right\}.\]
Then the set
\[\mathfrak{S}(\Phi; P):=\{ a\in M : \Phi(a) \in \mathfrak{S}(a; P)\}\]
 is $\sigma$-strongly closed.
\end{Lem}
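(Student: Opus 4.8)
The plan is to mirror the strategy of Lemma~\ref{Lem:ccpC}, adapting it to the $\sigma$-strong setting. The key structural fact is that each individual map $\Psi_{(p_i)}(a):=\sum_{i=1}^\infty p_i a p_i$, for a fixed sequence $(p_i)_{i=1}^\infty \in P^{\mathbb{N}, \perp}$, is a normal unital completely positive map on $M$: positivity and complete positivity follow because each term $a\mapsto p_i a p_i$ is completely positive and the $\sigma$-strong sum of completely positive maps is completely positive; unitality and normality follow from $\sum_i p_i \leq 1$ together with the orthogonality $p_i \perp p_j$, which makes the defining series $\sigma$-strongly convergent by the remark preceding the lemma. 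The first step, then, is to verify these properties carefully, as they provide the uniform $\| \cdot \|_\varphi$-contractivity we will exploit.

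First I would take a net $(a_\lambda)$ in $\mathfrak{S}(\Phi; P)$ converging $\sigma$-strongly to some $a\in M$; without loss of generality the net is operator-norm bounded, since $\mathfrak{S}(\Phi;P)$ is checked to be bounded and we only need closedness on bounded sets where $\| \cdot \|_\varphi$ metrizes the $\sigma$-strong topology. The goal is to produce a single sequence $(p_i)_{i=1}^\infty \in P^{\mathbb{N},\perp}$ with $\Phi(a)=\sum_{i} p_i a p_i$, or more precisely to show $\Phi(a)\in \mathfrak{S}(a;P)$, which by definition is a $\sigma$-strong closure. For each $\lambda$ we are given $(p_i^{(\lambda)})_{i}$ with $\Phi(a_\lambda)=\sum_i p_i^{(\lambda)} a_\lambda p_i^{(\lambda)}$. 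The plan is to compare $\Phi(a)$ to these witnesses via the triangle inequality in $\| \cdot \|_\varphi$, writing
\[
\| \Phi(a) - \Psi_{(p_i^{(\lambda)})}(a)\|_\varphi \leq \| \Phi(a)-\Phi(a_\lambda)\|_\varphi + \| \Phi(a_\lambda)-\Psi_{(p_i^{(\lambda)})}(a)\|_\varphi,
\]
where the middle term vanishes by hypothesis and the first tends to $0$ since $\Phi$ is normal (hence $\sigma$-strongly continuous on bounded sets). The crux is the last term $\| \Psi_{(p_i^{(\lambda)})}(a_\lambda)-\Psi_{(p_i^{(\lambda)})}(a)\|_\varphi$: here I would use that $\Psi_{(p_i^{(\lambda)})}$ is unital completely positive, so the Kadison--Schwarz inequality gives $\| \Psi(a_\lambda)-\Psi(a)\|_\varphi^2 = \varphi(\Psi(a_\lambda-a)^\ast \Psi(a_\lambda-a)) \leq \varphi(\Psi((a_\lambda-a)^\ast(a_\lambda-a)))$, and then the trace-like property of $\varphi$ relative to $P\subset M_\varphi$ lets me collapse $\varphi\circ\Psi=\varphi$ on the relevant elements, bounding this by $\| a_\lambda - a\|_\varphi^2$, uniformly in $\lambda$.

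The hard part will be the final identity $\varphi(\Psi_{(p_i)}(b))=\varphi(b)$ (for $b$ of the form $(a_\lambda-a)^\ast(a_\lambda-a)$), which is exactly where the centralizer hypothesis $P\subset M_\varphi$ is indispensable: since each $p_i$ commutes with $\varphi$ in the sense $\varphi(p_i x)=\varphi(x p_i)$, one gets $\varphi(p_i b p_i)=\varphi(p_i^2 b)=\varphi(p_i b)$ termwise, and summing over the orthogonal family together with $\sum_i p_i \leq 1$ yields $\varphi(\Psi_{(p_i)}(b))\leq \varphi(b)$, which suffices for a contractive estimate. I would justify interchanging $\varphi$ with the $\sigma$-strong sum by normality of $\varphi$. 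Combining the three estimates shows $\| \Phi(a)-\Psi_{(p_i^{(\lambda)})}(a)\|_\varphi \to 0$, so $\Phi(a)$ lies in the $\| \cdot \|_\varphi$-closure, and hence the $\sigma$-strong closure, of $\{\Psi_{(p_i)}(a):(p_i)\in P^{\mathbb{N},\perp}\}$; that is, $\Phi(a)\in \mathfrak{S}(a;P)$, proving $a\in \mathfrak{S}(\Phi;P)$ and establishing that the set is $\sigma$-strongly closed. The main subtlety to watch throughout is restricting all convergence arguments to operator-norm bounded sets, where $\| \cdot \|_\varphi$ genuinely induces the $\sigma$-strong topology.
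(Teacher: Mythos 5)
Your strategy is the paper's: establish the uniform $\|\cdot\|_\varphi$-contractivity of $\Psi_{(p_i)}(x)=\sum_i p_i x p_i$ from $P\subset M_\varphi$, run a three-term triangle inequality in $\|\cdot\|_\varphi$, and finish by noting that $\{\sum_i p_i a p_i : (p_i)\in P^{\mathbb{N},\perp}\}$ is operator-norm bounded, so $\|\cdot\|_\varphi$-approximation there yields membership in the $\sigma$-strong closure $\mathfrak{S}(a;P)$. Two of your justifications, however, do not stand as written. First, a minor one: $\Psi_{(p_i)}$ is not unital, since $\Psi_{(p_i)}(1)=\sum_i p_i$ may be a proper subprojection of $1$; it is only a contractive normal c.p.\ map. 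This is harmless, because the Schwarz inequality $\Psi(x)^\ast\Psi(x)\leq \Psi(x^\ast x)$ still holds for contractive c.p.\ maps, and combined with $\varphi\circ\Psi\leq\varphi$ on positives (your centralizer computation, noting also that $1-\sum_i p_i\in M_\varphi$) it gives $\|\Psi(x)\|_\varphi\leq\|x\|_\varphi$; the paper gets the same estimate by a direct computation. Second, and genuinely: your reduction to operator-norm bounded nets rests on the assertion that $\mathfrak{S}(\Phi;P)$ is bounded, which is false --- since $\Phi(ta)=t\Phi(a)$ and $\mathfrak{S}(ta;P)=t\,\mathfrak{S}(a;P)$, the set is invariant under scaling, hence unbounded whenever nonzero; and as it is not obviously convex, no Kaplansky-type argument lets you pass to a bounded approximating net. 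So the step ``$\Phi$ is normal, hence $\sigma$-strongly continuous on bounded sets, so $\|\Phi(a)-\Phi(a_\lambda)\|_\varphi\to 0$'' has no bounded set to work on.

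The repair is easy and requires no boundedness at all: by Schwarz,
\[\|\Phi(a)-\Phi(a_\lambda)\|_\varphi^2 \leq (\varphi\circ\Phi)\bigl((a-a_\lambda)^\ast(a-a_\lambda)\bigr),\]
and since $\varphi\circ\Phi$ is a normal state while $\sigma$-strong convergence $a_\lambda\to a$ means precisely $\psi\bigl((a-a_\lambda)^\ast(a-a_\lambda)\bigr)\to 0$ for every normal state $\psi$, this term tends to $0$ regardless of $\|a_\lambda\|$; this is implicitly how the paper secures its inequality $\|\Phi(b)-\Phi(a)\|_\varphi<\epsilon$. One more local fix: you write $\Phi(a_\lambda)=\sum_i p_i^{(\lambda)} a_\lambda p_i^{(\lambda)}$, but membership in $\mathfrak{S}(a_\lambda;P)$ only provides a $\sigma$-strong closure point; you must take an $\epsilon$-approximate witness, $\|\Phi(a_\lambda)-\sum_i p_i^{(\lambda)} a_\lambda p_i^{(\lambda)}\|_\varphi<\epsilon$, which is legitimate because the witness set is bounded by $\|a_\lambda\|$ so $\|\cdot\|_\varphi$ computes its $\sigma$-strong closure --- this is exactly the paper's third inequality. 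With these two corrections your argument coincides step for step with the paper's proof.
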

\begin{proof}
We first observe that for any $(p_i)_{i=1}^\infty \in P^{\mathbb{N}, \perp}$,
the map $\Psi \colon M \rightarrow M$
defined by
$\Psi(a):= \sum_{i=1}^\infty p_i a p_i$
satisfies $\|\Psi(a)\|_\varphi \leq \|a\|_\varphi$.
Indeed, as $p_i \in M_\varphi$ for all $i$, \[\varphi(\Psi(a)^\ast \Psi (a))
=\varphi(\sum_{i, j} p_i a^\ast p_ip_j a p_j) 
=\varphi(\sum_{i} p_i a^\ast p_i^2 a p_i)
\leq \varphi(\sum_{i} a^\ast ap_i )
\leq \varphi(a^\ast a).\]

Let $a$ be taken from the $\sigma$-strong closure of
$\mathfrak{S}(\Phi; P)$.
Then for any $\epsilon>0$, one can choose
$b\in M$ and $(p_i)_{i=1}^\infty \in P^{\mathbb{N}, \perp}$
satisfying \[\|b -a\|_\varphi<\epsilon,\] \[\|\Phi(b)-\Phi(a)\|_\varphi< \epsilon,\]
\[\| \Phi(b)-\sum_{i=1}^\infty p_i b p_i \|_\varphi <\epsilon.\]
By the inequality proved in the previous paragraph, we have
\[\| \sum_{i=1}^\infty p_i a p_i - \sum_{i=1}^\infty p_i b p_i \|_\varphi \leq \| a-b\|_\varphi <\epsilon.\]
Combining these inequalities,
we obtain
\[\|\Phi(a) -\sum_{i=1}^\infty p_i a p_i\|_\varphi <3\epsilon.\]
As the set $\{\sum_{i=1}^\infty p_i a p_i: (p_i)_{i=1}^\infty \in P^{\mathbb{N}, \perp}\}$ is bounded,
this proves $ a \in \mathfrak{S}(\Phi; P)$ as desired. 
\end{proof}
We recall the following description of the commutant of the crossed product von Neumann algebra.
For a discrete group $\Gamma$, denote by $\rho$ the right regular representation of $\Gamma$ on $\ell^2(\Gamma)$.
\begin{Prop}[\cite{Tak1}, Chapter V, Proposition 7.14]\label{Prop:comm}
Let $M$ be a $\Gamma$-von Neumann algebra represented on a Hilbert space $H$.
Let $v$ be a unitary representation of $\Gamma$ on $H$
implementing the $\Gamma$-action on $M$.
Let $M \vnc \Gamma \rightarrow \mathcal{B}(H \otimes \ell^2(\Gamma))$
be the left regular representation.
Then the commutant of $M \vnc \Gamma$ in $\mathcal{B}(H \otimes \ell^2(\Gamma))$
is equal to $M' \vnc \Gamma$.
Here $\Gamma$ acts on $M'$ through $v$, and $M' \vnc \Gamma$ is represented on $H \otimes \ell^2(\Gamma)$
by the covariant representation $(\iota, \tilde{\rho})$
where
$\iota \colon M' \rightarrow \mathcal{B}(H)\otimes \mathbb{C}\id_{\ell^2(\Gamma)}$
is the amplified representation
and $\tilde{\rho}:= v \otimes \rho$.
\end{Prop}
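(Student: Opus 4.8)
The statement is the commutation theorem for von Neumann algebra crossed products, and the plan is to prove the two inclusions separately, the inclusion $M'\vnc\Gamma\subseteq(M\vnc\Gamma)'$ being routine and the reverse inclusion carrying all the weight. First I would verify the easy inclusion on generators. Writing vectors of $H\otimes\ell^2(\Gamma)$ as $H$-valued functions on $\Gamma$, the generators of $M\vnc\Gamma$ act by $(\pi(x)\xi)(g)=v_g^\ast x v_g\,\xi(g)$ and $(\lambda_s\xi)(g)=\xi(s^{-1}g)$, while $(\iota(y)\xi)(g)=y\,\xi(g)$ and $(\tilde\rho_s\xi)(g)=v_s\,\xi(gs)$. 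A direct computation using $y\in M'$, the $\Gamma$-invariance of $M$, and the relation $v_gv_h=v_{gh}$ shows that each $\iota(y)$ and each $\tilde\rho_s$ commutes with every $\pi(x)$ and $\lambda_t$; since $(\iota,\tilde\rho)$ is covariant and $\iota$ is a faithful normal representation of $M'$, its image is a copy of $M'\vnc\Gamma$ sitting inside $(M\vnc\Gamma)'$.

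For the reverse inclusion the plan is to untwist the coefficient algebra and reduce to a tensor product commutation theorem. I would introduce the unitary $U$ on $H\otimes\ell^2(\Gamma)$ given by $(U\xi)(g)=v_g\xi(g)$; it satisfies $U\pi(x)U^\ast=x\votimes 1$, $U\lambda_sU^\ast=v_s\votimes\lambda_s$, $U\tilde\rho_sU^\ast=1\votimes\rho_s$, and $U\iota(y)U^\ast=\pi'(y)$, where $(\pi'(y)\xi)(g)=v_g y v_g^\ast\,\xi(g)$. Hence $U(M\vnc\Gamma)U^\ast=(M\votimes 1)\vee\{v_s\votimes\lambda_s\}''$, whose commutant, by the commutation theorem for von Neumann tensor products, equals $(M'\votimes\mathcal B(\ell^2(\Gamma)))\cap\{v_s\votimes\lambda_s\}'$. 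Writing such an operator as a matrix $(T_{g,h})_{g,h\in\Gamma}$ with $T_{g,h}\in M'$, commutation with $v_s\votimes\lambda_s$ forces the covariance $T_{sg,sh}=v_s T_{g,h}v_s^\ast$, so $T$ is completely determined by the coefficients $y_r:=T_{e,r}\in M'$ through $T_{g,h}=v_g\,y_{g^{-1}h}\,v_g^\ast$. This is exactly the matrix form of a generic element of $\pi'(M')\vee\{1\votimes\rho_s\}''=U(M'\vnc\Gamma)U^\ast$, so at the level of Fourier coefficients the two algebras coincide.

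The delicate point, which I expect to be the real obstacle, is passing from the coefficientwise description (all $y_r$ lie in $M'$) to genuine membership of $T$ in the von Neumann algebra generated by $\iota(M')$ and $\tilde\rho(\Gamma)$: for non-amenable $\Gamma$ there is no F\o lner net with which to Ces\`aro-sum the formal series $\sum_r\iota(y_r)\tilde\rho_r$, so the matrix computation alone does not close the inclusion. To handle this I would first dispose of the case $M=\mathcal B(H)$, where $M'=\mathbb{C}$ and, after conjugating by $U$, the assertion reduces to $(\mathcal B(H)\votimes L(\Gamma))'=\mathbb{C}\votimes R(\Gamma)$, i.e.\ the classical identity $L(\Gamma)'=R(\Gamma)$ tensored with $\mathcal B(H)$, which involves no convergence. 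For general $M$ I would reduce to this model by putting $M$ in standard form: to the standard (GNS) representation an arbitrary normal faithful representation is related by amplification and reduction, under which both the crossed product and its commutant transform functorially, so it suffices to treat the standard case. There $M\vnc\Gamma$ is again in standard form on $L^2(M)\otimes\ell^2(\Gamma)$, its modular conjugation $J$ satisfies $J(M\vnc\Gamma)J=(M\vnc\Gamma)'$, and computing $J$ on the generators $\pi(x),\lambda_s$ shows $J(M\vnc\Gamma)J\subseteq M'\vnc\Gamma$, giving the reverse inclusion directly. This absorbs the problematic convergence into the tensor commutation theorem and Tomita--Takesaki theory, completing the proof that $(M\vnc\Gamma)'=M'\vnc\Gamma$.
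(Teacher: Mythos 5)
Your opening moves are correct and are exactly the classical skeleton: the generator computations giving $M'\vnc\Gamma\subseteq(M\vnc\Gamma)'$, the untwisting unitary $U$, and the covariance relation $T_{sg,sh}=v_sT_{g,h}v_s^\ast$ pinning down the matrix of a commuting operator are all right, and you correctly flag that matching Fourier coefficients does not by itself give membership in the von Neumann algebra generated by $\iota(M')$ and $\tilde\rho(\Gamma)$. (For calibration: the paper offers no proof at all here; it cites Takesaki, Vol.~I, Chapter~V, Proposition~7.14, whose argument is the elementary one sketched below.) The genuine gap is in your proposed closure. The reduction of a general implemented representation to standard form by ``amplification and reduction, under which both the crossed product and its commutant transform functorially'' does not go through as stated: reduction is by a projection $e'\in M'\votimes\mathcal{B}(\ell^2)$ which is in general not fixed by the implementation $u_s\otimes 1$, so the cut-down system carries no induced implementing unitary; the statement being proved depends on the given $v$ through the action $\mathrm{Ad}(v)$ on $M'$, and two implementations of the same $\alpha$ differ by the $M'$-valued cocycle $s\mapsto v_s^\ast w_s$, so the target algebra changes by a cocycle perturbation that your reduction does not track; and even granting the standard case, the compression of the commutant by $\iota(e')$ is not visibly generated by the compressed generators (corners of a von Neumann algebra are not generated by corners of generators, and $\iota(e')\tilde\rho_s\iota(e')$ is not even unitary on the corner). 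Separately, the endgame assumes that $M\vnc\Gamma$ is standard on $L^2(M)\otimes\ell^2(\Gamma)$ with a computable modular conjugation; for non-tracial $M$ this is the modular theory of dual weights, which in the usual development is established together with, or after, the very commutation theorem you are proving—so as written the plan risks circularity. Your $M=\mathcal{B}(H)$ base case is correct but inert: the general case never reduces to it.

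The irony is that your own matrix computation already contains the standard elementary fix, which needs neither amenability, nor standard form, nor Tomita theory, and works uniformly in $(H,v)$: compute the matrix form of $\mathcal{B}':=\bigl(\iota(M')\vee\tilde\rho(\Gamma)''\bigr)'$ as well, and invoke the double commutant theorem. Commutation with $\iota(M')$ forces $S_{g,h}\in M''=M$, and commutation with $\tilde\rho_s$ forces $S_{gs,hs}=v_s^\ast S_{g,h}v_s$, so $S_{g,h}=v_h^\ast x_{gh^{-1}}v_h$ with $x_r\in M$; likewise, in the unconjugated picture, $T\in(M\vnc\Gamma)'$ has $T_{g,h}=y_{g^{-1}h}v_{g^{-1}h}$ with $y_r\in M'$ (your relation). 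Each entry of $TS$ and of $ST$ is an absolutely convergent sum by Cauchy--Schwarz applied to the $\ell^2$-rows and columns, and after moving the $v$'s across coefficients one gets
\[(TS)_{g,h}=\sum_{j}\alpha_{g^{-1}jh^{-1}}(x_{jh^{-1}})\,y_{g^{-1}j}\,v_{g^{-1}j},\qquad
(ST)_{g,h}=\sum_{k}\alpha_{k^{-1}}(x_{gk^{-1}})\,y_{k^{-1}h}\,v_{k^{-1}h},\]
which agree term by term under the reindexing $j=gk^{-1}h$, using only $[M,M']=0$ and $v_gv_h=v_{gh}$. Hence every $T\in(M\vnc\Gamma)'$ commutes with $\mathcal{B}'$, so $(M\vnc\Gamma)'\subseteq\mathcal{B}''=\iota(M')\vee\tilde\rho(\Gamma)''$, and together with your easy inclusion this finishes the proof; the convergence problem you isolated is thereby bypassed rather than solved, because no Fourier series is ever summed.
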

The following result immediately follows from Proposition \ref{Prop:comm} and the bicommutant theorem.

\begin{Cor}\label{Cor:vnc}
Let $N\subset M$ be an inclusion of $\Gamma$-von Neumann algebras.
Then 
\[N \vnc \Gamma=\{ a\in M \vnc \Gamma : E_s(a)\in N{\rm ~for~all~}s\in \Gamma\}.\]
\end{Cor}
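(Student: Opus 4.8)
The plan is to derive the equality from the commutant formula of Proposition~\ref{Prop:comm} by a double-commutant argument, turning the statement about Fourier coefficients into a fibrewise commutation check in the left regular representation on $H\otimes\ell^2(\Gamma)$. I would first dispose of the inclusion ``$\subseteq$''. For $n\in N$ and $s\in\Gamma$ one has $E_t(nu_s)=n\,\delta_s(t)\in N$ for every $t$; since each coefficient map $E_t=E(\,\cdot\,u_t^\ast)$ is normal and $N$ is $\sigma$-weakly closed, the set $\{a\in M\vnc\Gamma: E_s(a)\in N\text{ for all }s\in\Gamma\}$ is $\sigma$-weakly closed, and as it contains all generators $nu_s$ of $N\vnc\Gamma$ it contains $N\vnc\Gamma$.

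For ``$\supseteq$'' I would apply Proposition~\ref{Prop:comm} to the $\Gamma$-von Neumann algebra $N\subseteq\mathcal{B}(H)$ (with the action implemented by the same $v$): this gives $(N\vnc\Gamma)'=N'\vnc\Gamma$, and hence by the bicommutant theorem $N\vnc\Gamma=(N'\vnc\Gamma)'$. The algebra $N'\vnc\Gamma$ is generated by the amplification $\iota(N')$ and by $\tilde\rho(\Gamma)=(v\otimes\rho)(\Gamma)$, so for $a\in M\vnc\Gamma$ with every $E_s(a)\in N$ it suffices to show that $a$ commutes with $\iota(m')$ for all $m'\in N'$ and with every $\tilde\rho_s$. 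Commutation with $\tilde\rho_s$ is automatic: since $N\subseteq M$ we have $\tilde\rho_s\in M'\vnc\Gamma=(M\vnc\Gamma)'$ by Proposition~\ref{Prop:comm} applied to $M$, while $a\in M\vnc\Gamma$.

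The core is the commutation with $\iota(m')$, which I would verify on matrix coefficients against the vectors $\xi\otimes\delta_t$ and $\eta\otimes\delta_r$ (for $\xi,\eta\in H$ and $t,r\in\Gamma$). Writing $a_s:=E_s(a)$, a direct computation using $\lambda_s(\xi\otimes\delta_t)=\xi\otimes\delta_{st}$ and $(\pi(a_s)\xi)(r)=\alpha_{r^{-1}}(a_s)\,\xi(r)$ yields
\[
\langle a(\xi\otimes\delta_t),\,\eta\otimes\delta_r\rangle=\langle \alpha_{r^{-1}}(a_{rt^{-1}})\xi,\,\eta\rangle,
\]
so that, since $\iota(m')=m'\otimes 1$ acts fibrewise, the corresponding matrix coefficient of the commutator is
\[
\langle [a,\iota(m')](\xi\otimes\delta_t),\,\eta\otimes\delta_r\rangle=\langle [\alpha_{r^{-1}}(a_{rt^{-1}}),\,m']\xi,\,\eta\rangle.
\]
Because $N$ is $\Gamma$-invariant and $a_s\in N$, each $\alpha_{r^{-1}}(a_{rt^{-1}})$ lies in $N$ and therefore commutes with $m'\in N'$; hence all these coefficients vanish and $[a,\iota(m')]=0$. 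This places $a$ in $(N'\vnc\Gamma)'=N\vnc\Gamma$ and finishes ``$\supseteq$''.

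I expect the only delicate point to be the bookkeeping passage between the ``coefficient'' description and the concrete operator $a$. Rather than summing the (generally non-norm-convergent) series $\sum_s\pi(a_s)\lambda_s$ and then arguing that commutation survives a $\sigma$-strong limit, I would work solely with the fixed matrix coefficients above, each of which is a single finite expression involving the one coefficient $a_{rt^{-1}}$, thereby avoiding any convergence issue. The genuinely indispensable---and easily overlooked---input is the $\Gamma$-invariance $\alpha_r(N)=N$: without it, $\alpha_{r^{-1}}(a_s)$ need not belong to $N$, and the vanishing of the commutator would fail.
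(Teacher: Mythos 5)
Your proof is correct and takes essentially the same route as the paper, whose proof of Corollary~\ref{Cor:vnc} is precisely the one-line observation that it follows from Proposition~\ref{Prop:comm} applied to $N$ together with the bicommutant theorem, i.e.\ $N \vnc \Gamma = (N' \vnc \Gamma)'$. Your matrix-coefficient verification of the commutation with $\iota(N')$ and $\tilde\rho(\Gamma)$ (including the use of $\Gamma$-invariance of $N$) simply fills in the details the paper leaves implicit.
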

\begin{Rem}
The \Cs-algebra analogue of Corollary \ref{Cor:vnc}
holds true when the acting group has the AP (Proposition 3.4 of \cite{Suz17}).
However it fails for non-exact groups, even when the coefficients are commutative as shown in Section 5 of \cite{RW}. In the intermediate cases (\cite{LS}), nothing is known.
\end{Rem}
\begin{Thm}[The Main Theorem, \Ws-case]\label{Thm:MainW}
Let $\Gamma$ be a countable group.
Let $\alpha \colon \Gamma \curvearrowright (X, \mu)$, $\beta \colon \Gamma \curvearrowright (Y, \nu)$ be essentially free actions on a standard probability space.
Let $\pi \colon Y\rightarrow X$ be a factor map.
We regard $L^\infty(X)$ as a $\Gamma$-von Neumann subalgebra of $L^\infty(Y)$ via $\pi$.
Then the map 
\[L^\infty(Z) \mapsto L^\infty(Z)\vnc \Gamma \]
gives a lattice isomorphism between the lattice of intermediate extensions $Z$ of $\pi$
and that of intermediate von Neumann algebras
of 
$L^\infty(X)\vnc \Gamma \subset L^\infty(Y)\vnc \Gamma$.
\end{Thm}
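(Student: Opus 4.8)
The plan is to mirror the structure of the proof of Theorem~\ref{Thm:MainC}, replacing norm arguments by $\sigma$-strong arguments and the AP-based Proposition~3.4 of \cite{Suz17} by its von Neumann counterpart, Corollary~\ref{Cor:vnc}. The two things to establish are: (i) every intermediate von Neumann algebra $N$ of $L^\infty(X)\vnc\Gamma \subset L^\infty(Y)\vnc\Gamma$ is globally invariant under the conditional expectation $E$ (indeed under every coefficient map $E_s$), and (ii) once $E(N)\subset N$ is known, $N=E(N)\vnc\Gamma$ with $E(N)$ an intermediate $\Gamma$-von Neumann algebra. Step (ii) is essentially immediate from Corollary~\ref{Cor:vnc}: the coefficient maps $E_s$ are $\sigma$-strongly continuous, so $E(N)=N\cap L^\infty(Y)$ is a $\Gamma$-invariant intermediate von Neumann algebra of $L^\infty(X)\subset L^\infty(Y)$, and Corollary~\ref{Cor:vnc} identifies $E(N)\vnc\Gamma$ with exactly those elements whose every coefficient lies in $E(N)$, which contains $N$; the reverse inclusion is clear. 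The bijectivity and the lattice isomorphism then follow from the bijective correspondence between intermediate $\Gamma$-von Neumann algebras and intermediate extensions recalled at the start of this section.

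So the crux is step (i), the von Neumann analogue of the computation $E(a)\in\mathfrak{N}(a;C_0(X))$. First I would fix a faithful normal state $\varphi$ on $L^\infty(Y)\vnc\Gamma$ that restricts to a faithful normal state on $L^\infty(Y)$ lying in the centralizer situation of Lemma~\ref{Lem:ucpW}; concretely, take $\varphi$ coming from the measure $\nu$ composed with $E$, so that $L^\infty(Y)\subset (L^\infty(Y)\vnc\Gamma)_\varphi$ and in particular the projections from $L^\infty(Y)$ are $\varphi$-central. For $a$ in a $\sigma$-strongly dense $*$-subalgebra (say the algebraic span of $f u_s$ with $f\in L^\infty(Y)$ and $s$ ranging over a finite set $F\subset\Gamma\setminus\{e\}$), I would apply Lemma~\ref{Lem:efree} to the free action $\alpha$ on $(X,\mu)$ to obtain a finite partition $X=\bigsqcup_{i} X_i$ into sets with $X_i\cap\alpha_s(X_i)=\emptyset$ for all $s\in F$. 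Writing $p_i=\chi_{X_i}\in L^\infty(X)\subset (L^\infty(Y)\vnc\Gamma)_\varphi$, the disjointness yields $p_i u_s p_i = p_i\,\alpha_s(p_i)\,u_s = 0$ for $s\in F$, whence $\sum_i p_i a p_i = \sum_i p_i^2 E(a) = E(a)$, exactly as in the $C^\ast$-case. This shows $E(a)\in\mathfrak{S}(a;P)$ for $P$ the projections of $L^\infty(X)$.

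With this computation on a dense subalgebra, Lemma~\ref{Lem:ucpW} applied to $\Phi=E$ and $P$ the projections of $L^\infty(X)$ upgrades the conclusion to $\mathfrak{S}(E;P)=L^\infty(Y)\vnc\Gamma$, giving $E(a)\in\mathfrak{S}(a;L^\infty(X))\subset \Wso(L^\infty(X),a)$ for every $a$, and hence $E(N)\subset N$ for every intermediate $N$. Replacing $E$ by $E_s = E(\,\cdot\,u_s^\ast)$ (equivalently, applying the same argument after multiplying by $u_s^\ast$) gives $E_s(N)\subset N\cap L^\infty(Y)=E(N)$ for all $s$, which is what feeds into Corollary~\ref{Cor:vnc}. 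The main obstacle I anticipate is purely topological: I must verify that the finite partition produced by Lemma~\ref{Lem:efree} really is the right substitute for the partition of unity $\{f_i\}$ used in Theorem~\ref{Thm:MainC} and that the passage from the dense subalgebra to all of $L^\infty(Y)\vnc\Gamma$ is legitimate in the $\sigma$-strong topology. The delicacy is that $\sigma$-strong closedness of $\mathfrak{S}(E;P)$ (Lemma~\ref{Lem:ucpW}) is only useful together with boundedness of the approximating set $\{\sum_i p_i a p_i\}$, so I would take care to run the density argument on operator-norm bounded pieces, where $\|\cdot\|_\varphi$ induces the $\sigma$-strong topology, exactly as flagged in the discussion preceding Lemma~\ref{Lem:ucpW}.
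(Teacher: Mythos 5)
Your proposal is correct and takes essentially the same route as the paper's proof: the identical partition from Lemma~\ref{Lem:efree} with $p_i=\chi_{X_i}$ and the state $\varphi=\nu\circ E$ (placing $L^\infty(Y)$ in the centralizer) gives $\sum_i p_i a p_i=E(a)$ for $a\in L^\infty(Y)\ac\Gamma$, Lemma~\ref{Lem:ucpW} upgrades this to $\mathfrak{S}(E;P(X))=L^\infty(Y)\vnc\Gamma$ so that $E(Q)\subset Q$ for every intermediate von Neumann algebra $Q$, and Corollary~\ref{Cor:vnc} then yields $Q=E(Q)\vnc\Gamma$. The boundedness caveat you flag at the end is precisely what Lemma~\ref{Lem:ucpW} is built to absorb, so no additional argument is needed there.
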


\begin{proof}
Let $a\in L^\infty (Y) \ac \Gamma$ be given.
Set $F:=\{ s\in \Gamma\setminus \{e\} : E_s(a)\neq 0\}.$
By the choice of $a$, $F$ is finite.
Choose a partition $(X_i)_{i=1}^n$ of $X$
as in Lemma \ref{Lem:efree} for $F$.
Set $p_i:=\chi_{X_i}$ for each $i$.
Define $\varphi:= \nu \circ E$.
A direct computation shows that $L^\infty(Y) \subset (L^\infty(Y) \vnc \Gamma)_\varphi$.
Let $P(X)$ denote the set of projections in $L^\infty(X)$.
Then $(p_i)_{i=1}^n \in P(X)^{\mathbb{N}, \perp}$
and $\sum_{i=1}^n p_i =1$.
By the choice of $(X_i)_{i=1}^n$, for any $s\in F$, we have
\[\sum_{i=1}^n p_i u_s p_i
=\sum_{i=1}^n p_i \alpha_s(p_i) u_s=0.\]
As a result, we obtain
\[ \sum_{i=1}^n p_i a p_i =E(a).\]
Hence
\[L^\infty(Y) \ac \Gamma \subset \mathfrak{S}(E; P(X)).\]
Lemma \ref{Lem:ucpW} then implies
\[\mathfrak{S}(E; P(X))=L^\infty(Y) \vnc \Gamma.\]
In particular, for any intermediate von Neumann algebra
$Q$ of $L^\infty(X) \vnc \Gamma \subset L^\infty(Y) \vnc \Gamma$,
we have $E(Q) \subset Q$.
Corollary \ref{Cor:vnc} then yields
$Q= E(Q) \vnc \Gamma$.
\end{proof}
Theorem \ref{Thm:MainW} and the following significant theorem of Margulis \cite{Mar78} lead to
new natural and concrete examples of maximal amenable subalgebras.
It is remarkable that the lattices of their intermediate von Neumann algebras
are completely describable.
Some of them are extremal in the sense that
they admit no proper intermediate W$^\ast$-algebras.

For a locally compact second countable group $G$ and its closed subgroup $H$,
the homogeneous space $G/H$ admits a $G$-quasi-invariant probability
measure, which is unique up to measure class. We denote (one of) this measure by $m_{G/H}$, and regard $G/H$ as the standard probability space
$(G/H, m_{G/H})$.
We refer the reader to the book \cite{Kna} for basic facts on Lie groups.
In particular, for parabolic subgroups, see VII.7 of \cite{Kna}.
\begin{Thm}[Margulis' factor theorem \cite{Mar78}, cf.~Theorem 8.1.3 in \cite{Zim}]\label{Thm:Mar}
Let $G$ be a connected semisimple Lie group of real rank at least $2$ with no compact factors and trivial center.
Let $\Gamma$ be an irreducible lattice of $G$.
Let $P$ be a minimal parabolic subgroup of $G$.
Then the map
\[Q \mapsto L^\infty(G/Q)\]
gives a lattice anti-isomorphism from
the lattice of intermediate closed subgroups $Q$ of $P\subset G$
onto that of $\Gamma$-invariant von Neumann subalgebras
of $L^\infty(G/P)$.
\end{Thm}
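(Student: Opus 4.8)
The plan is to first translate the statement into the language of measurable ergodic theory: a $\Gamma$-invariant von Neumann subalgebra of $L^\infty(G/P)$ is the same datum as a $\Gamma$-invariant sub-$\sigma$-algebra, equivalently a measurable $\Gamma$-equivariant factor map $\psi \colon G/P \to Z$ taken up to isomorphism. The \emph{easy direction} is then to check that $Q \mapsto L^\infty(G/Q)$ is well defined, injective and order reversing: for $P \subseteq Q \subseteq G$ the canonical projection $G/P \to G/Q$ is $G$-equivariant, so $L^\infty(G/Q)$ sits inside $L^\infty(G/P)$ as a ($G$-, hence $\Gamma$-) invariant subalgebra; the subgroup $Q$ is recovered from $G/Q$ as the stabilizer of the base point, and enlarging $Q$ shrinks $G/Q$ and hence $L^\infty(G/Q)$. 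I would also record here the Lie-theoretic fact that every closed subgroup containing a minimal parabolic $P$ is itself a standard parabolic subgroup; this identifies the source lattice with the finite lattice of subsets of the simple restricted roots and makes the preservation of meets and joins a routine verification.

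The whole content is the \emph{hard direction}: every $\Gamma$-factor $\psi \colon G/P \to Z$ is isomorphic, as a $\Gamma$-space, to $G/P \to G/Q$ for some intermediate $Q$. My approach is to upgrade the $\Gamma$-factor to a $G$-factor by induction. Since $\Gamma$ is a lattice, $\Omega := G/\Gamma$ carries a $G$-invariant probability measure, and the diagonal $G$-action on $\Omega \times G/P \cong G \times_\Gamma (G/P)$ is exactly the action induced from $\Gamma \curvearrowright G/P$. Under this induction correspondence, $G$-equivariant factors of $\Omega \times G/P$ correspond bijectively to $\Gamma$-equivariant factors of $G/P$; the factor $\psi$ thus produces a $G$-factor $W = G \times_\Gamma Z$ fitting into $G$-maps $\Omega \times G/P \to W \to \Omega$, where the projection onto $\Omega$ corresponds to the trivial $\Gamma$-factor.

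Next I would invoke the intermediate factor theorem for higher-rank semisimple groups (Nevo--Zimmer): for an ergodic $G$-space $\Omega$ with $G$-invariant probability measure and the boundary $B = G/P$, every intermediate $G$-factor $\Omega \times B \to W \to \Omega$ is of the form $W \cong \Omega \times G/Q$ for a parabolic $Q \supseteq P$, compatibly with the two projections. Feeding our $W$ into this theorem and then reversing the induction correspondence (using $\Omega \times G/Q \cong G \times_\Gamma (G/Q)$ and injectivity of induction on factors) recovers $Z \cong G/Q$ as a $\Gamma$-space, so the original subalgebra equals $L^\infty(G/Q)$; this gives surjectivity and completes the anti-isomorphism. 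This intermediate factor theorem is the main obstacle and the genuinely deep step: it is exactly where real rank $\geq 2$ and irreducibility of $\Gamma$ are indispensable (the conclusion fails in rank one), and its own proof rests on the structure theory of the restricted root system, the amenability of $G \curvearrowright G/P$, Moore's ergodicity and the Mautner phenomenon, and a cocycle-rigidity analysis forcing the generic stabilizer to be parabolic. As an alternative to the induction route one could follow Margulis' original argument, which derives an algebraicity (rationality) statement for the relevant $\Gamma$-equivariant measurable maps into flag varieties directly from the superrigidity machinery; either way the higher-rank rigidity input is the crux, while the order-theoretic bookkeeping is routine.
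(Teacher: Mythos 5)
The paper gives no proof of Theorem \ref{Thm:Mar} at all: it is imported as a black box, attributed to Margulis \cite{Mar78} (cf.\ Theorem 8.1.3 of \cite{Zim}), and everything the paper does with it happens downstream, in Corollary \ref{Cor:Mar}. So there is no internal argument to compare yours against; the only question is whether your blind reconstruction is a sound derivation of the cited result, and it is, though it follows the modern route rather than Margulis' original one. Margulis (as presented in Chapter 8 of \cite{Zim}) works directly with a $\Gamma$-invariant sub-$\sigma$-algebra of $G/P$ and proves, by the measurable algebraic-geometry techniques underlying superrigidity (boundary maps, contracting root subgroups, the Mautner phenomenon), that the associated $\Gamma$-quotient is rational, i.e.\ a.e.\ equal to a projection $G/P \to G/Q$. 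You instead induce: identify $G\times_\Gamma (G/P)$ with $G/\Gamma \times G/P$ via $[g,x]\mapsto (g\Gamma, gx)$, match $\Gamma$-factors of $G/P$ with $G$-factors of $G/\Gamma\times G/P$ \emph{over} $G/\Gamma$ (your text omits ``over $G/\Gamma$'' once, though your displayed chain $\Omega\times G/P \to W \to \Omega$ has it right --- without the relativization the correspondence is false, e.g.\ $\Omega\times G/P\to G/P$ is a $G$-factor not of induced form), and then invoke the intermediate factor theorem of Nevo and Zimmer (Invent.\ Math.\ \textbf{138} (1999)); irreducibility of $\Gamma$ enters exactly where you place it, to make $G\curvearrowright G/\Gamma$ irreducibly ergodic as that theorem requires. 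This modularizes the rigidity input cleanly, with the caveat that the Nevo--Zimmer theorem is a later generalization of the very statement being proved, so your argument delegates the entire analytic content to an external theorem --- logically legitimate, since their proof does not pass through Margulis' factor theorem, but it should be cited explicitly rather than treated as folklore, just as the paper cites Margulis.

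Two small points to nail down if you write this up. Passing between $\Gamma$-invariant von Neumann subalgebras of $L^\infty(G/P)$ and measurable $\Gamma$-factor maps, and reversing the induction correspondence on factors, both rest on point-realization and measurable-selection arguments (Mackey); say a word about these. Also, an order-reversing bijection between two lattices automatically interchanges meets and joins, so the identification of the source lattice with the power set of the simple restricted roots (Knapp \cite{Kna}, Proposition 7.76, which the paper itself invokes in Example \ref{Exm:Mar}) is illuminating for the applications but not needed to upgrade your bijection to a lattice anti-isomorphism.
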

\begin{Cor}\label{Cor:Mar}
Let $G$, $\Gamma$, $P$ be as in Theorem \ref{Thm:Mar}.
Let $Q$ be a proper intermediate closed group of $P \subset G$
with $\bigcap_{g\in G} gQg^{-1} \cap \Gamma= \{ 1\}$
$($this is automatic when $G$ is simple$)$.
Then the map
\[R\mapsto L^\infty(G/R)' \vnc\Gamma {\rm ~(where~the~ commutant~ is~ taken~ in~} \mathcal{B}(L^2(G/P)))\]
gives a lattice isomorphism between
the lattice of intermediate closed groups $R$ of $P\subset Q$
and that of intermediate von Neumann algebras of $L^\infty(G/P)\vnc \Gamma \subset L^\infty(G/Q)'\vnc \Gamma$.
In particular $L^\infty(G/P)\vnc \Gamma$ is a maximal amenable
subalgebra of the non-amenable factor $L^\infty(G/Q)' \vnc \Gamma $.
\end{Cor}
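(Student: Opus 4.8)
The plan is to realize the asserted correspondence as a composition of three lattice (anti-)isomorphisms already available to us, and then to read off maximal amenability from the resulting complete description together with standard facts on boundary actions. Throughout I work with the factor map $\pi\colon G/P\to G/Q$ coming from the inclusion $P\subset Q$, and I view $G/P,G/Q$ as standard probability spaces with their $\Gamma$-quasi-invariant measures.

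First I must check that $\alpha\colon\Gamma\curvearrowright G/Q$ and $\beta\colon\Gamma\curvearrowright G/P$ are essentially free, so that Theorem \ref{Thm:MainW} applies to $\pi$. Since $P$ is a minimal parabolic, every intermediate closed subgroup $R$ of $P\subset G$ is parabolic, hence real-algebraic; in particular $Q$ and each $R$ with $P\subset R\subset Q$ are algebraic. Fix $\gamma\in\Gamma\setminus\{1\}$ and $R\in\{P,Q\}$. The set $\{g\in G:g^{-1}\gamma g\in R\}$ is Zariski-closed, and if it were all of $G$ then $\gamma$ would lie in the normal core $\bigcap_g gRg^{-1}$, forcing $\gamma\in\bigcap_g gQg^{-1}\cap\Gamma=\{1\}$ (using $R\subset Q$), a contradiction. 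Hence it is a proper subvariety, thus null, so the fixed-point set of $\gamma$ in $G/R$ is null and both actions are essentially free.

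Now I assemble the three maps. Margulis' theorem (Theorem \ref{Thm:Mar}), restricted to the interval $P\subset R\subset Q$, gives a lattice anti-isomorphism $R\mapsto L^\infty(G/R)$ from the intermediate closed groups of $P\subset Q$ onto the $\Gamma$-invariant von Neumann subalgebras between $L^\infty(G/Q)$ and $L^\infty(G/P)$, that is, onto the intermediate extensions of $\pi$. Theorem \ref{Thm:MainW} then gives a lattice isomorphism from these extensions onto the intermediate von Neumann algebras of $L^\infty(G/Q)\vnc\Gamma\subset L^\infty(G/P)\vnc\Gamma$, sending $G/R$ to $L^\infty(G/R)\vnc\Gamma$. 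Realizing every $L^\infty(G/R)\vnc\Gamma$ on $L^2(G/P)\otimes\ell^2(\Gamma)$ through the multiplication representation $L^\infty(G/R)\subset L^\infty(G/P)\subset\mathcal B(L^2(G/P))$ and the Koopman representation of $\Gamma$, Proposition \ref{Prop:comm} identifies $(L^\infty(G/R)\vnc\Gamma)'=L^\infty(G/R)'\vnc\Gamma$, with the commutant taken in $\mathcal B(L^2(G/P))$. As taking commutants reverses inclusions, $B\mapsto B'$ is a lattice anti-isomorphism onto the intermediate algebras of $L^\infty(G/P)\vnc\Gamma\subset L^\infty(G/Q)'\vnc\Gamma$ (here $L^\infty(G/P)'=L^\infty(G/P)$ since it is maximal abelian, so the bottom is $L^\infty(G/P)'\vnc\Gamma=L^\infty(G/P)\vnc\Gamma$). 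The composite of the three maps is exactly $R\mapsto L^\infty(G/R)'\vnc\Gamma$; since two of the three reverse order, the composite preserves it, as one sees directly from $R_1\subset R_2\Rightarrow L^\infty(G/R_2)\subset L^\infty(G/R_1)\Rightarrow L^\infty(G/R_1)'\subset L^\infty(G/R_2)'$. This yields the claimed lattice isomorphism.

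For maximal amenability I use the following inputs. As $P$ is amenable, $\Gamma\curvearrowright G/P$ is an amenable action, so $L^\infty(G/P)\vnc\Gamma$ is injective. By Moore ergodicity each $\Gamma\curvearrowright G/R$ (with $R$ a proper parabolic, hence noncompact) is ergodic, and with essential freeness this makes $L^\infty(G/R)\vnc\Gamma$, and hence its commutant $L^\infty(G/R)'\vnc\Gamma$, a factor; in particular $L^\infty(G/Q)'\vnc\Gamma$ is a factor. Suppose $L^\infty(G/P)\vnc\Gamma\subsetneq L\subset L^\infty(G/Q)'\vnc\Gamma$ with $L$ amenable. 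By the isomorphism above $L=L^\infty(G/R)'\vnc\Gamma$ for some $P\subsetneq R\subset Q$; being a factor, $L$ is amenable iff $L^\infty(G/R)\vnc\Gamma$ is, i.e.\ iff $\Gamma\curvearrowright G/R$ is amenable. Since the only amenable parabolic containing $P$ is $P$ itself (any larger one has a noncompact semisimple Levi factor), the action is non-amenable for $R\supsetneq P$, so by Connes--Feldman--Weiss $L^\infty(G/R)\vnc\Gamma$ is non-injective, a contradiction. Hence $L^\infty(G/P)\vnc\Gamma$ is maximal amenable, and the case $R=Q\supsetneq P$ shows $L^\infty(G/Q)'\vnc\Gamma$ is a non-amenable factor.

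I expect the main obstacle to be the amenability dichotomy for the boundary actions $\Gamma\curvearrowright G/R$ in the last paragraph: the implication ``$\Gamma\curvearrowright G/R$ amenable $\Rightarrow R$ amenable'' is not formal, since amenability of actions need not descend to factors, and one must invoke Zimmer's theory relating amenability of the $\Gamma$-action to that of the ambient action $G\curvearrowright G/R$ (equivalently, to amenability of $R$) before concluding via Connes--Feldman--Weiss. By comparison the essential-freeness verification and the commutant bookkeeping via Proposition \ref{Prop:comm} are routine given the machinery already in place.
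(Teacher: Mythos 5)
Your proposal is correct and takes essentially the same route as the paper's proof: the paper likewise derives essential freeness of $\Gamma \curvearrowright G/Q$ (hence $G/P$) from the normal-core hypothesis (citing Remark 13 of \cite{Oza16}, whose Zariski-closure argument you reproduce inline), obtains the lattice isomorphism by composing Theorem \ref{Thm:Mar}, Theorem \ref{Thm:MainW}, and the bicommutant/commutant identification of Proposition \ref{Prop:comm}, and settles maximal amenability from maximal amenability of $P$ in $G$ together with Zimmer's equivalence (Corollary 4.3.7 of \cite{Zim}) between amenability of $\Gamma \curvearrowright G/H$ and amenability of $H$, plus the hyperfiniteness theorem (Theorem 2.1 of \cite{Zim77}, which plays exactly the role you assign to Connes--Feldman--Weiss), with factoriality from Moore's ergodicity theorem. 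The only differences are presentational: you prove inline the facts the paper handles by citation.
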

\begin{proof}
The assumption on $Q$ yields the essential freeness of
$\Gamma \curvearrowright G/Q$;
see Remark 13 of \cite{Oza16} for instance.
Theorems \ref{Thm:MainW}, \ref{Thm:Mar}, and the bicommutant theorem
confirm that the stated map indeed defines a lattice isomorphism.
We also recall that minimal parabolic subgroups of $G$ are maximal amenable in $G$.
In particular all intermediate closed groups of the inclusion $P \subset Q$ but $P$
are non-amenable.
By Corollary 4.3.7 of \cite{Zim}, for any closed subgroup $H$ of $G$,
the action $\Gamma \curvearrowright (G/H, m_{G/H})$
is amenable (see \cite{Zim}, Definition 4.3.1, or \cite{Ana87}) if and only if $H$ is amenable.
By Theorem 2.1 of \cite{Zim77}, Theorem \ref{Thm:MainW}, and the maximal
amenability of $P \subset G$, we conclude the maximal amenability of $L^\infty(G/P)\vnc \Gamma \subset L^\infty(G/Q)' \vnc \Gamma$. Factoriality of these crossed products follows from
Moore's ergodicity theorem (\cite{Zim}, Corollary 2.2.6).
\end{proof}
\begin{Rem}[Explicit presentation of $L^\infty(G/Q)'$]
Let $P \subset Q \subset G$ be closed inclusions of locally compact second countable groups.
Here we present $L^\infty(G/Q)'\subset \mathcal{B}(L^2(G/P))$
as the algebra of ``block-diagonal operators along the decomposition $G/P=\bigsqcup_{x\in G/Q} x/P$.''
To see this, fix a $Q$-quasi-invariant probability measure $\nu_e$ on $Q/P$.
Take a measurable cross section $s \colon G/Q \rightarrow G$ (\cite{Zim}, Corollary A.8).
Define
$\nu_x:= s(x)_\ast(\nu_e)$ for $x\in G/Q$.
Note that each $\nu_x$ is concentrated on $x/P$.
The integral
\[\int_{G/Q} \nu_{x} \,dm_{G/Q}\]
then defines a $G$-quasi-invariant measure on $G/P$.
We employ this integral for $m_{G/P}$.
Along this integral decomposition, we obtain the direct integral decomposition
\[L^2(G/P) = \int^{\oplus}_{G/Q}L^2(x/P, \nu_x) \,dm_{G/Q}.\]
(See Chapter IV.8 of \cite{Tak1} for direct integrals.)
This leads to the direct integral decomposition
\[L^\infty(G/Q)' = \int^{\oplus}_{G/Q}\left\{\mathcal{B}(L^2(x/P, \nu_x)), L^2(x/P, \nu_x)\right\} \,dm_{G/Q}.\]
The left translations define a left $G$-action on the right hand side,
and it describes the left $G$-action on $L^\infty(G/Q)'$ via the above identification.
\end{Rem}
\begin{Exm}\label{Exm:Mar}
Here we demonstrate a few explicit consequences of Corollary \ref{Cor:Mar}.
\begin{enumerate}
\item
Put $G=\SL(3, \mathbb{R})$, $\Gamma =\SL(3, \mathbb{Z})$,
\[P:= \{ [a_{i, j}]_{i, j}\in G: a_{i, j}=0 {\rm ~for~ all~} i<j\},\]
\[Q=\{ [a_{i, j}]_{i, j}\in G: a_{1, j}=0 {\rm ~for~} j=2, 3\}.\]
We then have natural identifications
\[G/P ={\rm Fl}_3(\mathbb{R})\ ({\rm the~full~flag~manifold}),\
G/Q = \mathbb{P}^2\mathbb{R},\]
given by \[AP \mapsto (A(\mathbb{R}\oplus 0_2), A(\mathbb{R}^2 \oplus 0_1), \mathbb{R}^3),\ AQ \mapsto A(\mathbb{R}\oplus 0_2) {\rm~for~}A \in G.\]
The factor map $G/P \rightarrow G/Q$ corresponds to the map
\[ [V_1, V_2, V_3] \mapsto [V_1].\]
A direct computation shows that the inclusion $P \subset Q$ has
no proper intermediate groups.
Hence, by Corollary \ref{Cor:Mar}, 
the amenable subfactor 
\[L^\infty({\rm Fl}_3(\mathbb{R})) \vnc \Gamma \subset L^\infty(\mathbb{P}^2\mathbb{R})' \vnc \Gamma\]
has no proper intermediate von Neumann algebras.
It is worth mentioning that Ozawa \cite{Oza16} recently showed that $L^\infty(\mathbb{P}^2\mathbb{R}) \vnc \Gamma $, hence $L^\infty(\mathbb{P}^2\mathbb{R})' \vnc \Gamma$, is a full factor.
\item
For $n \geq 3$, the following quadruplet $(G, \Gamma, P, R)$ satisfies the assumptions of Corollary \ref{Cor:Mar}.
\[G= \PSL(n, \mathbb{R}),\ \Gamma =\PSL(n, \mathbb{Z}),\]
\[P=\{ [a_{i, j}]_{i, j}\in G: a_{i, j}=0 {\rm ~for~ all~} i<j\},\]
\[R=\{ [a_{i, j}]_{i, j}\in G: a_{n, j}=0 {\rm ~for~all~} j<n \}.\]
We completely describe the lattice $\mathbb{L}$ of intermediate von Neumann algebras of
\[N:=L^\infty(G/P) \vnc \Gamma \subset L^\infty(G/R)' \vnc \Gamma:=M.\]
For $S \subset \{1, \ldots, n-1 \}$, define
\[G_S:=\left\{A\in G: A (\mathbb{R}^{d} \oplus 0_{n-d})= \mathbb{R}^{d} \oplus 0_{n-d} {\rm ~for~all~} d\in \{1, \ldots, n-1\} \setminus S\right\}.\]
Denote by $\mathfrak{P}(n-2)$ the lattice of subsets of $\{1, \ldots, n-2\}$.
Then the map $S \mapsto G_S$
gives a lattice isomorphism from $\mathfrak{P}(n-2)$ onto the lattice of intermediate subgroups of $P \subset R$ (see \cite{Kna}, VII.7 Examples (1)).
Consequently the map
\[S \mapsto L^\infty(G/ G_S)'\vnc \Gamma\]
gives a lattice isomorphism
from $\mathfrak{P}(n-2)$ onto $\mathbb{L}$. By Theorem 11 of \cite{Oza16},
all intermediate von Neumann subalgebras of $N\subset M$ but $N$ are full.
Note that $G/G_S$ is identified with the partial flag manifold of signature $t_1, \ldots, t_r$
where $t_1< \cdots< t_r$ denote the elements of $\{1, \ldots, n\} \setminus S$.
\item
Let $G, P, R, G_S$ be as in $(2)$.
Set $\mathcal{O}:=\mathbb{Z}[\sqrt{2}]$.
Let $\sigma$ be the ring automorphism on $\mathcal{O}$
given by $\sigma(\sqrt{2})=-\sqrt{2}$.
Define \[\Lambda := \{(A, \sigma(A)): A\in \PSL(n, \mathcal{O})\}.\]
Then the quadruplet $(G^2, \Lambda, P^2, G \times R)$
satisfies the assumptions of Corollary \ref{Cor:Mar} (see Example 2.2.5 of \cite{Zim}).
Denote by $\mathbb{M}$ the lattice of intermediate von Neumann algebras of
\[L^\infty(G^2/P^2) \vnc \Lambda \subset L^\infty(G^2/ G \times R)' \vnc \Lambda\] 
Put $G_{S, T}:= G_{S}\times G_{T}$ for $(S, T)\in \mathfrak{P}(n-1) \times \mathfrak{P}(n-2)$.
Then the map
\[(S, T) \mapsto L^\infty(G^2/ G_{S, T} )'\vnc \Lambda\]
gives a lattice isomorphism from $\mathfrak{P}(n-1) \times \mathfrak{P}(n-2)$
onto $\mathbb{M}$.
\end{enumerate}
\end{Exm}
We end this section by giving a remark on non-free actions.
Similar to the \Cs-algebra case (Proposition \ref{Prop:topfree}), Theorem \ref{Thm:MainW} fails for non-free actions.
\begin{Prop}\label{Prop:nonfree}
Let $\Gamma$ be a countable group.
Let $\alpha \colon \Gamma \curvearrowright (X, \mu)$ be a non-essentially free action
on a standard probability space.
Then there is an essentially free action $\beta \colon \Gamma \curvearrowright (Y, \nu)$ on a standard probability space
and a factor map $\pi \colon Y \rightarrow X$ with the following property.
There is an intermediate von Neumann algebra $M$ of $L^\infty(X) \vnc \Gamma \subset L^\infty(Y) \vnc \Gamma$ not of the form $L^\infty(Z) \vnc \Gamma$
for any intermediate extension $Z$ of $\pi$.
\end{Prop}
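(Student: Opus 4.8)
The plan is to follow the strategy of Proposition \ref{Prop:topfree}, producing a bad intermediate algebra of the shape $(L^\infty(X)\vnc\Gamma)+I$, where $I$ is an ideal of an auxiliary \emph{non-factor} crossed product sitting inside $L^\infty(Y)\vnc\Gamma$. The essential new difficulty is measure-theoretic: the topological construction pins the extra $\Gamma/\Lambda$-structure on a single fixed orbit, which here is a null set and across which $L^\infty(X)$ cannot be evaluated. So I first fix $s_0\in\Gamma\setminus\{e\}$ with $\mu(\{x:s_0x=x\})>0$ (this is exactly the failure of essential freeness), put $\Lambda:=\langle s_0\rangle$ and $F:=\{x:s_0x=x\}$, and replace the single orbit by the positive-measure blow-up $E:=\Gamma/\Lambda\times F$, carrying the action $t\cdot(g\Lambda,x):=(tg\Lambda,x)$ and the $\Gamma$-map $p_E(g\Lambda,x):=gx\in X$.

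\emph{The extension.} I set $Y_0:=X\sqcup E$ with factor map $q\colon Y_0\to X$ equal to the identity on $X$ and to $p_E$ on $E$; then $L^\infty(E)$ is a $\Gamma$-invariant direct summand of $L^\infty(Y_0)$ and $L^\infty(X)$ acts on it as the diagonal multipliers $f\mapsto f\circ p_E$ (now honestly defined, as $F$ has positive measure). To make the action essentially free I pass to $Y:=Y_0\times B$ with $B$ a Bernoulli shift and $\beta$ the diagonal action, so that $\pi\colon Y\to Y_0\xrightarrow{q}X$ is the required factor map. Using $B$ only as an outer tensor factor keeps the non-free translation structure on $L^\infty(E)\otimes 1_B$, so that, writing $M_0:=\ell^\infty(\Gamma/\Lambda)\vnc\Gamma$,
\[\mathcal N:=(L^\infty(E)\otimes 1_B)\vnc\Gamma\cong M_0\votimes L^\infty(F)\]
is a non-factor: by the standard identification for transitive actions $M_0\cong\mathcal B(\ell^2(\Gamma/\Lambda))\votimes L\Lambda$, with $\ell^\infty(\Gamma/\Lambda)$ the diagonal masa $D$ and canonical expectation $E_{\mathrm{diag}}\otimes\tau_\Lambda$.

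\emph{The bad algebra.} Since $L\Lambda\neq\mathbb C$, I choose a $\Gamma$-invariant central projection $z_0$ of $M_0$ with $0\neq z_0\neq 1$ (the $\mathrm{Ad}\,u_s$-action on $Z(M_0)$ is trivial), put $z:=z_0\otimes 1_F$ and $I:=z\mathcal N$. A direct computation through the identification above gives the two crucial facts $E(I)=L^\infty(E)$ and $I\cap L^\infty(E)=0$. Setting $\mathcal A:=L^\infty(X)\vnc\Gamma$, I define $M:=\mathcal A+I$. First I check that $z$ commutes with $\mathcal N$ (it is central there) and with $\mathcal A$ (using $\Gamma$-invariance of $z$ for the unitaries $u_s$, and that each $f\circ p_E$ lies in $D\votimes L^\infty(F)$, which commutes with the $L\Lambda$-factor containing $z_0$); hence $z$ is a central projection of $\mathcal C:=\Wso(\mathcal A,\mathcal N)$ and $I=z\mathcal C$, so that $M=(1-z)\mathcal A\oplus z\mathcal C$ is a unital von Neumann algebra intermediate to $\mathcal A\subset L^\infty(Y)\vnc\Gamma$. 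Finally a coefficient-wise comparison, using $\mathcal A\cap L^\infty(Y)=L^\infty(X)$ and $I\cap L^\infty(Y)=I\cap L^\infty(E)=0$, yields $M\cap L^\infty(Y)=L^\infty(X)$, whereas $E(M)=L^\infty(X)+L^\infty(E)\supsetneq L^\infty(X)$. Were $M=L^\infty(Z)\vnc\Gamma$ for some intermediate extension, Corollary \ref{Cor:vnc} would force $E(M)=L^\infty(Z)=M\cap L^\infty(Y)$, a contradiction.

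\emph{Main obstacle.} The crux is the tension built into the construction: essential freeness of $\beta$ must be arranged globally (forcing the Bernoulli factor), yet the non-factoriality that supplies $I$ comes precisely from the non-free direction $\Gamma/\Lambda$, which must therefore be excluded from the freeing. This is resolved by attaching $B$ only as an external tensor factor and working inside the intermediate algebra $\mathcal C$, in which $z$ becomes central even though it is not central in all of $L^\infty(Y)\vnc\Gamma$. The most delicate points are that $M$ is $\sigma$-weakly closed (handled by realizing $I$ as $z\mathcal C$) and the two identities $E(I)=L^\infty(E)$, $I\cap L^\infty(E)=0$, which reduce via $M_0\cong\mathcal B(\ell^2(\Gamma/\Lambda))\votimes L\Lambda$ to the triviality of $z_0M_0\cap(D\otimes\mathbb C)$ and the fullness of $(E_{\mathrm{diag}}\otimes\tau_\Lambda)(z_0M_0)$.
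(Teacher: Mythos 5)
Your proposal is correct and follows essentially the same route as the paper's proof: blow up the positive-measure fixed set to $\Gamma/\Lambda\times F$, form $Y_0=X\sqcup(\Gamma/\Lambda\times F)$, restore essential freeness by a diagonal product with a free system (the paper uses the translation action $\Gamma\curvearrowright\Gamma$ where you use a Bernoulli shift---an immaterial difference), extract a weakly closed ideal $I$ with $E(I)=L^\infty(E)$ and $I\cap L^\infty(E)=0$ from the non-factor $\ell^\infty(\Gamma/\Lambda)\vnc\Gamma\cong\mathcal{B}(\ell^2(\Gamma/\Lambda))\votimes L\Lambda$, and set $M=L^\infty(X)\vnc\Gamma+I$, concluding from $M\cap L^\infty(Y)=L^\infty(X)\subsetneq E(M)$. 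Your explicit verifications (taking $I=z\mathcal{N}$ for a central projection $z$, checking $z$ is central in $\Wso(\mathcal{A},\mathcal{N})$ to get weak closedness of $M$) correctly fill in the details the paper delegates to the proof of Proposition \ref{Prop:topfree}.
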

\begin{proof}
The proof is basically the same as that of Proposition \ref{Prop:topfree}.
However, since single points no longer make sense in the measurable setting,
we need a slight modification. 

Take $s\in \Gamma \setminus \{e\}$
which trivially acts on a non-null set $U \subset X$.
Let $\Lambda$ be the subgroup of $\Gamma$ generated by $s$.
Define $p \colon \Gamma / \Lambda \times U \rightarrow \Gamma U$
by $p(s\Lambda, u):= su$.
We equip $\Gamma / \Lambda \times U$ with the $\Gamma$-action
$s(t\Lambda, u):=(st\Lambda, u)$; $s, t \in \Gamma$, $u \in U$.
Then $p$ is a factor map.
Define a unital normal embedding \[\iota \colon L^\infty(X) \rightarrow L^\infty(X) \oplus L^\infty(\Gamma / \Lambda \times U)\] to be
$\iota(f) := (f, f \circ p)$.
Let $L^\infty(Y_0)$ denote the von Neumann algebra generated by
$\iota( L^\infty(X))$ and $0 \oplus L^\infty(\Gamma / \Lambda \times U)$.
We regard $L^\infty(X)$ as a (unital) von Neumann subalgebra of $L^\infty(Y_0)$
via $\iota$. We also identify $L^\infty(\Gamma / \Lambda \times U)$ with
$0 \oplus L^\infty(\Gamma / \Lambda \times U)$.
Now consider the diagonal action
$\beta \colon \Gamma \curvearrowright Y:= \Gamma \times Y_0$.
This is a free action on the standard probability space $Y$.
Let $q\colon Y \rightarrow Y_0$ be the projection onto the second coordinate.
Define $\pi:= p \circ q \colon Y \rightarrow X$.

We show that $\pi$ is the desired extension.
To show this, observe that, since $\ell^\infty(\Gamma/\Lambda) \vnc \Gamma \cong \mathcal{B}(\ell^2(\Gamma/\Lambda)) \votimes L(\Lambda)$ is not a factor,
one can choose a weakly closed ideal $I$
of $L^\infty(\Gamma / \Lambda \times U) \vnc \Gamma$
satisfying $I \cap L^\infty(\Gamma / \Lambda \times U)= 0$, $E(I) =L^\infty(\Gamma / \Lambda \times U)$.
Then, similar to the proof of Proposition \ref{Prop:topfree},
\[M:= L^\infty(X) \vnc \Gamma + I\] gives the desired intermediate
von Neumann algebra.
\end{proof}
\section{Non-commutative dynamical systems}\label{Sec:NC}
In this section we extend the Main Theorem to non-commutative dynamical systems. We then give some applications,
including the lattice realization theorem Corollary \ref{Corint:Lat}.

To formulate a non-commutative variant of the Main Theorem, we first
need to introduce a few definitions.
For a \Cs-algebra $A$, define
\[\ell^2(A):= \left\{ a= (a_i)_{i=1}^\infty \in A^\mathbb{N}:
{\rm the\ series\ }\sum_{i=1}^\infty a_i^\ast a_i{\rm \ converges\ in \ norm}\right\}.\]
We equip $\ell^2(A)$ with the norm $\| a \|_2 := \|\sum_{i=1}^\infty a_i^\ast a_i\|^{1/2}$.
When $A$ is a $\Gamma$-\Cs-algebra,
we equip $\ell^2(A)$ with the pointwise $\Gamma$-action.
For $a= (a_i)_{i=1}^\infty, b= (b_i)_{i=1}^\infty \in \ell^2(A)$, $c\in A$,
we define
\[a^\ast b := \sum_{i=1}^\infty a_i^\ast b_i \in A,\]
\[ac:= (a_i c)_{i=1}^\infty,~ ca:=(c a_i)_{i=1}^\infty \in \ell^2(A).\]
It is not hard to check
\[\| a^\ast b\| \leq \| a\|_2 \| b \|_2,\
\max \{\|ac\|_2, \|ca\|_2\} \leq \|a\|_2 \|c \|.\]
\begin{Def}\label{Def:cfC}
Let $A \subset B$ be an inclusion of $\Gamma$-\Cs-algebras.
We say that the inclusion is {\it centrally $\Gamma$-free}
if for any $b, c\in B$, $\epsilon>0$, $s\in \Gamma \setminus \{e \}$,
there is an element $a\in \ell^2( A)_1 (\subset \ell^2(B))$
satisfying
$\|a^\ast b a - b\| <\epsilon$,
$\|a^\ast c s(a)\| < \epsilon$.
\end{Def}
For a $\Gamma$-\Cs-algebra $A$,
we say that $A$
is {\it centrally $\Gamma$-free},
or say that the underlying $\Gamma$-action is {\it centrally free},
if the identity inclusion $A \subset A$
is centrally $\Gamma$-free.
Note that for commutative \Cs-algebras,
central freeness is equivalent to the freeness of the action on the Gelfand spectrum.
\begin{Rem}
Unlike the measurable case (Lemma \ref{Lem:efree}),
we have no universal bound of the cardinals of open covers in Lemma \ref{Lem:freet}.
(The situation remains the same in the non-commutative case. See Remark \ref{Rem:cf}.)
This is the reason why we adopt $\ell^2(A)$
in the formulation of central freeness.
Its importance can be read from Examples \ref{Exm:NCBC} and \ref{Exm:NCC}. 
\end{Rem}
For von Neumann algebras,
it is convenient to use the notion of ultraproduct.
Throughout this section, we fix
a free ultrafilter $\omega$ on $\mathbb{N}$.
Here we recall a few definitions.
For complete treatments, we refer the reader to \cite{AH}.
Let $M$ be a von Neumann algebra.
Let $\ell^\infty(M)$ denote the \Cs-algebra of bounded sequences in $M$.
Set
\[\mathcal{I}_\omega(M) := \left\{ (a_i)_{i=1}^\infty \in \ell^\infty(M):
\sslim\lim_{i\rightarrow \omega} a_i= 0\right\},\]
\[ \mathcal{M}^\omega(M):= \left\{ a\in \ell^\infty(M): a \mathcal{I}_\omega(M),
\mathcal{I}_\omega(M) a \subset \mathcal{I}_\omega(M) \right\},\]
\[\mathcal{M}_\omega(M):= \left \{ (a_i)_{i=1}^\infty \in \ell^\infty(M): \lim_{i \rightarrow \omega}\| \varphi(a_i ~\cdot~) - \varphi (~\cdot~ a_i)\|=0 {\rm \ for \ all \ }\varphi \in M_\ast \right\}.\]
The quotient \Cs-algebras $M^\omega:= \mathcal{M}^\omega(M)/\mathcal{I}_\omega(M)$, $M_\omega:=\mathcal{M}_\omega(M)/ \mathcal{I}_\omega(M)$,
are called Ocneanu's ultraproduct of $M$ and
Connes' asymptotic centralizer of $M$ respectively.
It is known that $M^\omega$ is a von Neumann algebra (possibly with non-separable predual).
Moreover $M_\omega$ sits in $M^\omega$ as a von Neumann subalgebra.
For any faithful normal state $\varphi$ on $M$,
the map $(a_i)_{i=1}^\infty \in \ell^\infty(M) \mapsto \lim_{i\rightarrow \omega} \varphi(a_i)$ induces
a faithful normal state $\varphi^\omega$ on $M^\omega$
with $M_\omega \subset (M^\omega)_{\varphi^\omega}$.
We identify $M$ with a von Neumann subalgebra of $M^\omega$
via the map $ a \mapsto (a)_{i=1}^\infty + \mathcal{I}_\omega(M)$.
For a $\Gamma$-von Neumann algebra $M$,
we equip $M^\omega$ with the $\Gamma$-action induced from the pointwise action.

Next consider an inclusion $N \subset M$ of von Neumann algebras.
When it admits a faithful normal conditional expectation,
we have $N^\omega \subset M ^\omega$.
In general, this is not true.
We thus define 
\[N^\omega \cap M_\omega := [\ell^\infty(N) \cap \mathcal{M}_\omega(M)]/\mathcal{I}_\omega(N).\]
It is not hard to show that $N^\omega \cap M_\omega$ is a von Neumann subalgebra of $M_\omega$.
Indeed, this follows from a slight modification of the proof of
the fact that $M_\omega$ is a von Neumann algebra.
\begin{Def}\label{Def:cfW}
Let $N \subset M$ be an inclusion of $\Gamma$-von Neumann algebras.
We say that the inclusion is {\it centrally $\Gamma$-free}
if for any $s\in \Gamma \setminus \{e\}$,
there is a sequence $(p_i)_{i=1}^\infty$ of projections in $N^\omega \cap M_\omega$
satisfying
$\sum_{i=1}^\infty p_i =1$ and
$p_j s(p_j) =0$ for all $j\in \mathbb{N}$.
\end{Def}

\begin{Rem}\label{Rem:cf}
Here we list a few remarks related to Definition \ref{Def:cfW}.
\begin{enumerate}
\item
By Theorem 1.6 and Lemma 2.6 in Chapter XVII of \cite{Tak3},
Lemma \ref{Lem:efree}, and standard reindexation arguments,
the central $\Gamma$-freeness
of $N \subset M$
is equivalent to the proper outerness
of $\Gamma \curvearrowright N^\omega \cap M_\omega$.
In particular, the central freeness of a group action on a von Neumann algebra $M$ (\cite{Oc}, Section 5.2)
is equivalent to the central $\Gamma$-freeness of the identity inclusion $M\subset M$.
\item By the same reason, one can replace $\infty$ in Definition \ref{Def:cfW}
by $3$.
\item When $M$ is finite, $N^\omega \cap M_\omega = N^\omega \cap M'$.
Hence our notion of central freeness is compatible with
central triviality for automorphisms of type II$_1$ subfactors \cite{Kaw93}.
\end{enumerate}
\end{Rem}
We now state non-commutative analogues of the Main Theorem.
\begin{Thm}\label{Thm:NCMainC}
Let $\Gamma$ be a discrete group with the AP.
Let $A \subset B$ be an inclusion of $\Gamma$-\Cs-algebras.
Assume that the inclusion is centrally $\Gamma$-free.
Then the map 
\[D \mapsto D \rc \Gamma \]
gives a lattice isomorphism between the lattice of intermediate $\Gamma$-\Cs-algebras $D$ of $A\subset B$
and that of intermediate \Cs-algebras
of 
$A\rc \Gamma \subset B\rc \Gamma$.
\end{Thm}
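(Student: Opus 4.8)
The plan is to mimic the proof of Theorem \ref{Thm:MainC}, replacing the geometric covering argument (Lemma \ref{Lem:freet}) with the central $\Gamma$-freeness hypothesis and replacing the norm-closed container $\mathfrak{N}(a; S)$ with its analogue built from $\ell^2(A)_1$. The crux, exactly as in the commutative case, is to show that for every intermediate \Cs-algebra $C$ of $A \rc \Gamma \subset B \rc \Gamma$ we have $E(C) \subset C$; once this is established, Proposition 3.4 of \cite{Suz17} (which is available because $\Gamma$ has the AP) yields $C = E(C) \rc \Gamma$, and the verification that $E(C)$ is an intermediate $\Gamma$-\Cs-algebra of $A \subset B$ together with mutual inversion of the two maps is routine.

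First I would record the approximation that replaces the partition-of-unity computation. Fix $a \in B \ac \Gamma$, write $F := \{ s\in \Gamma \setminus\{e\} : E_s(a) \neq 0\}$, which is finite by the choice of $a$, and let $\epsilon > 0$. Applying the definition of central $\Gamma$-freeness finitely many times (once for each $s \in F$, against the relevant coefficients $E_s(a) \in B$, and then intersecting the resulting $\ell^2(A)_1$-vectors via a diagonal/tensoring argument on $\ell^2(A)$), I expect to produce a single $x = (x_i)_{i=1}^\infty \in \ell^2(A)_1$ with $\|x^\ast E(a) x - E(a)\| < \epsilon$ and $\|x^\ast E_s(a)\, s(x)\| < \epsilon$ for all $s \in F$ simultaneously. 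The point is then that
\[
x^\ast a x = \sum_{s\in \Gamma} x^\ast E_s(a)\, s(x)\, u_s,
\]
where the $e$-term is $\sum_i x_i^\ast E(a) x_i$, close to $E(a)$, while each $s$-term with $s \in F$ has small norm, and the remaining coefficients vanish because $E_s(a) = 0$ for $s \notin F \cup \{e\}$. Hence $\|x^\ast a x - E(a)\|$ is small, which shows $E(a) \in \mathfrak{N}(a; A)$, i.e. $B \ac \Gamma \subset \mathfrak{N}(E; A)$. Lemma \ref{Lem:ccpC} then upgrades this to $\mathfrak{N}(E; A) = B \rc \Gamma$, so $E(a) \in \Cso(A, a)$ for every $a \in B \rc \Gamma$, and consequently $E(C) \subset C$ for every intermediate \Cs-algebra $C$.

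I expect the main obstacle to be the bookkeeping in the simultaneous approximation step. Definition \ref{Def:cfC} supplies, for each fixed $b, c$ and each single $s$, a vector in $\ell^2(A)_1$; but here I need one vector that controls $E(a)$ and all the finitely many coefficients $E_s(a)$ at once. The natural device is to iterate: having chosen a vector handling the first constraint, conjugate the remaining data by it and feed the result back into the definition, absorbing the new vector via the module structure $a \mapsto ac$, $ca$ on $\ell^2(A)$ and the norm inequalities $\|a^\ast b\| \le \|a\|_2 \|b\|_2$, $\|ca\|_2 \le \|a\|_2\|c\|$ recorded before Definition \ref{Def:cfC}. Managing the error accumulation so that all of $\|x^\ast E(a) x - E(a)\|$ and the $\|x^\ast E_s(a) s(x)\|$ stay below $\epsilon$, while keeping $\|x\|_2 \le 1$, is the delicate part; everything after the inclusion $E(C) \subset C$ is formal, relying on Proposition 3.4 of \cite{Suz17} and the order-preserving nature of $D \mapsto D \rc \Gamma$ to conclude the lattice isomorphism.
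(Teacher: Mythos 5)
Your proposal is correct and takes essentially the same route as the paper: the paper implements exactly your iteration as an induction on $k=|F|$, peeling off one term $E_{s_k}(a)u_{s_k}$ per step, invoking central $\Gamma$-freeness for the pair $\bigl(E(a),\, b^\ast E_{s_k}(a)\, s_k(b)\bigr)$ and composing the two vectors $b, c \in \ell^2(A)_1$ via the module structure, then finishing with Lemma \ref{Lem:ccpC} and Proposition 3.4 of \cite{Suz17} just as you outline. The error accumulation you flag is harmless because the inductive statement is membership in the norm-closed set $\mathfrak{N}(a;A)$, so obtaining the approximation within $3\epsilon$ for every $\epsilon>0$ at each stage suffices.
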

\begin{Thm}\label{Thm:NCMainW}
Let $\Gamma$ be a countable discrete group.
Let $N \subset M$ be an inclusion of $\Gamma$-von Neumann algebras.
Assume that the inclusion is centrally $\Gamma$-free.
Then the map
\[P \mapsto P\vnc \Gamma \]
gives a lattice isomorphism between the lattice of intermediate $\Gamma$-von Neumann algebras $P$ of $N \subset M$
and that of intermediate von Neumann algebras
of 
$N\vnc \Gamma \subset M \vnc \Gamma$.
\end{Thm}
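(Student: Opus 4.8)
The plan is to reduce the statement, exactly as in the proof of Theorem \ref{Thm:MainW}, to the single assertion that every intermediate von Neumann algebra $Q$ of $N \vnc \Gamma \subset M \vnc \Gamma$ is globally invariant under the canonical conditional expectation, i.e.\ $E(Q) \subset Q$. Granting this, I first observe that $Q$ is $\Gamma$-invariant (it contains the implementing unitaries $u_s \in N \vnc \Gamma$, and the $\Gamma$-action is conjugation by the $u_s$), so $P := E(Q) = Q \cap M$ is a $\sigma$-weakly closed, $\Gamma$-invariant intermediate von Neumann algebra of $N \subset M$ (being the intersection of two von Neumann subalgebras). For $a \in Q$ one has $E_s(a) = E(a u_s^\ast) \in E(Q) = P$ for every $s$, so Corollary \ref{Cor:vnc} gives $Q \subset P \vnc \Gamma$, while $P \vnc \Gamma \subset Q$ is clear since $P \subset Q$ and $u_s \in Q$. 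Thus $Q = E(Q) \vnc \Gamma$, which makes $P \mapsto P \vnc \Gamma$ a bijection; injectivity and compatibility with the lattice operations are routine, as in the commutative case. So everything comes down to $E(Q) \subset Q$.

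To prove $E(Q) \subset Q$ I would follow the scheme of Theorem \ref{Thm:MainW}, but carry out the averaging in an ultrapower, since the projections separating the nontrivial translates are now only available in $N^\omega \cap M_\omega$ rather than in $N$ itself. Fix a faithful normal state $\psi$ on $M$ and put $\Phi := \psi^\omega \circ E$, a faithful normal state on $M^\omega \vnc \Gamma$; let $P$ denote the set of projections of $N^\omega \cap M_\omega$. Since $N^\omega \cap M_\omega \subset M_\omega \subset (M^\omega)_{\psi^\omega}$ and $(M^\omega)_{\psi^\omega}$ lies in the centralizer of $\Phi$ (a direct computation as for $\nu\circ E$ in Theorem \ref{Thm:MainW}), the hypotheses of Lemma \ref{Lem:ucpW} are met for the normal unital completely positive map $E \colon M^\omega \vnc \Gamma \to M^\omega \subset M^\omega \vnc \Gamma$ and the set $P$; hence $\mathfrak{S}(E; P)$ is $\sigma$-strongly closed. (That $M^\omega \vnc \Gamma$ has non-separable predual is harmless, as the proof of Lemma \ref{Lem:ucpW} does not use separability.)

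Next I would show $M \ac \Gamma \subset \mathfrak{S}(E; P)$, where the coefficients are taken in $M$. Given such an $a$, set $F := \{ s \in \Gamma\setminus\{e\} : E_s(a)\neq 0\}$, a finite set. Central $\Gamma$-freeness, in the proper-outerness form of Remark \ref{Rem:cf} together with a reindexation argument (the non-commutative, finite-$F$ analogue of Lemma \ref{Lem:efree}), produces finitely many orthogonal projections $p_1, \ldots, p_n \in P$ with $\sum_{i=1}^n p_i = 1$ and $p_i s(p_i) = 0$ for all $i$ and all $s \in F$. Because each $p_i$ lies in $M_\omega$ it commutes with the coefficients in $M \subset M^\omega$, so $p_i a_e p_i = a_e p_i$ and, for $s \in F$, $p_i a_s u_s p_i = a_s p_i s(p_i) u_s = 0$; summing gives $\sum_{i=1}^n p_i a p_i = E(a)$ in $M^\omega \vnc \Gamma$. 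Thus $E(a) \in \mathfrak{S}(a; P)$, i.e.\ $a \in \mathfrak{S}(E; P)$. As $M \ac \Gamma$ is $\sigma$-strongly dense in $M \vnc \Gamma$ and $\mathfrak{S}(E; P)$ is $\sigma$-strongly closed in $M^\omega \vnc \Gamma$, I conclude $M \vnc \Gamma \subset \mathfrak{S}(E; P)$; that is, $E(a) \in \mathfrak{S}(a; P)$ for every $a \in M \vnc \Gamma$.

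The hard part will be the descent from the ultrapower back into $Q$, which I expect to be the main obstacle. Fix $a \in Q$ and $\epsilon > 0$. Since $\|\cdot\|_\Phi$ metrizes the $\sigma$-strong topology on bounded sets, the previous step yields a single generator $g=\sum_{i=1}^n p_i a p_i$ (with $p_i \in P$) satisfying $\|E(a) - g\|_\Phi < \epsilon$. Representing each $p_i$ by a sequence $(p_i^{(k)})_k$ of projections in $N$, the honest elements $\zeta_k := \sum_{i=1}^n p_i^{(k)} a p_i^{(k)}$ all lie in $Q$ (as $p_i^{(k)} \in N \subset Q$ and $a \in Q$) and are bounded by $\|a\|$, and $g$ is represented by the sequence $(\zeta_k)_k$. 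Because $E(a)$ is a constant sequence, $\|E(a) - g\|_\Phi = \lim_{k\to\omega}\|E(a) - \zeta_k\|_\varphi$, where $\varphi = \psi\circ E$ is the restriction of $\Phi$ to $M \vnc \Gamma$; hence for $\omega$-many $k$ one has $\zeta_k \in Q$ with $\|E(a) - \zeta_k\|_\varphi < \epsilon$. Letting $\epsilon \to 0$ exhibits $E(a)$ as a bounded $\sigma$-strong limit of elements of $Q$, and since $Q$ is $\sigma$-strongly closed, $E(a) \in Q$. This gives $E(Q) \subset Q$ and completes the proof. The crux is precisely this last reconciliation: the averaging identity is genuinely an identity in $M^\omega \vnc \Gamma$, and the work is to extract from it honest bounded sequences in $Q$ converging to the honest target $E(a)$; using a \emph{finite} separating partition (legitimate by Remark \ref{Rem:cf}) is exactly what keeps these sequences inside $Q$, and taking the honest $E(a)$ as the target is what makes the $\|\cdot\|_\Phi$--$\|\cdot\|_\varphi$ comparison valid.
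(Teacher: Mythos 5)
Your proposal is correct and takes essentially the same route as the paper: the paper likewise reduces everything to $E(Q)\subset Q$ via Corollary \ref{Cor:vnc}, shows $E(a)\in\mathfrak{S}(a;P_{N^\omega\cap M_\omega})$ for $a\in M\ac\Gamma$ using a reindexed partition of unity in $N^\omega\cap M_\omega$ (its Lemma \ref{Lem:finite}) together with the $\sigma$-strong closedness from Lemma \ref{Lem:ucpW}, and performs exactly your descent step by a functional-calculus lifting of the $p_i$ to pairwise orthogonal projections in $N$ (its Lemma \ref{Lem:ultraW}, concluding $E(a)\in \Wso(N,a)\subset Q$). The only wrinkle is presentational: the paper views $P_{N^\omega\cap M_\omega}$ inside the ultrapower $(M\vnc\Gamma)^\omega$ rather than forming $M^\omega\vnc\Gamma$ as you do, which makes your phrase ``$g$ is represented by the sequence $(\zeta_k)_k$'' literally true coordinatewise, whereas in your setting it tacitly requires the (true but unremarked) canonical identification of $\Wso(M^\omega,\{u_s\})\subset(M\vnc\Gamma)^\omega$ with $M^\omega\vnc\Gamma$ via the ultrapower expectation $E^\omega$.
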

Theorems \ref{Thm:NCMainC} and \ref{Thm:NCMainW}
are proved in a similar way to the Main Theorem.
\begin{proof}[Proof of Theorem \ref{Thm:NCMainC}]
Similar to the proof of Theorem \ref{Thm:MainC},
it suffices to show $B \ac \Gamma \subset \mathfrak{N}(E; A)$.
Let $a\in B \ac \Gamma$ be given.
Set $F:=\{ s\in \Gamma \setminus \{e\}: E_s(a)\neq 0\}$.
Put $k:=|F|$.
Enumerate $F$ as $s_1, \ldots, s_k$.
We will show $a\in \mathfrak{N}(E; A)$ by induction on $k$.
The case $k=0$ is trivial.
Suppose we have shown the claim for $k-1$.
Let $\epsilon>0$ be given. Put $a_k := E_{s_k}(a)$.
Then by applying the induction hypothesis to $a - a_k u_{s_k}$,
one can choose $b \in \ell^2( A)_1$
satisfying
\[\|b^\ast (a -a_{k}u_{s_k})b -E(a)\| <\epsilon.\]
Since $A \subset B$ is centrally $\Gamma$-free,
one can choose $c \in \ell^2(A)_1$
satisfying 
\[\|c^\ast E(a) c - E(a) \| <\epsilon,\
\|c^\ast (b^\ast a_k s_k(b)) s_k(c)\| < \epsilon.\]
Combining these three inequalities, we obtain
\begin{eqnarray*}
&&\| c^\ast (b ^\ast a b) c -E(a)\|\\
&&\leq \|c^\ast( b^\ast (a - a_k u_{s_k}) b - E(a))c\| + \|c^\ast E(a) c -E(a) \|
+ \|c^\ast (b^\ast a_k s_k(b)) s_k(c)\|\\
&&< 3 \epsilon.
\end{eqnarray*}
Since $\epsilon>0$ is arbitrary, we conclude $a \in \mathfrak{N}(E; A)$.
\end{proof}
\begin{Lem}\label{Lem:finite}
Let $N \subset M$ be an inclusion of $\Gamma$-von Neumann algebras.
Assume that $N \subset M$ is centrally $\Gamma$-free.
Then for any finite subset $F \subset \Gamma \setminus \{e\}$,
there is a sequence $(p_i)_{i=1}^\infty$ of projections in $N^\omega \cap M_\omega$
satisfying
$\sum_{i=1} ^\infty p_i=1$
and $p_js(p_j) =0$ for all $s \in F$ and $j \in \mathbb{N}$.
\end{Lem}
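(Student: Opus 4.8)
The plan is to bootstrap from the single-element case, which is exactly the definition of central $\Gamma$-freeness (Definition \ref{Def:cfW}), up to an arbitrary finite set $F$ by intersecting the partitions obtained for each element of $F$ separately. First I would fix $F = \{s_1, \ldots, s_m\} \subset \Gamma \setminus \{e\}$. For each $s_j$, central $\Gamma$-freeness furnishes a sequence of projections $(p^{(j)}_i)_{i=1}^\infty$ in $N^\omega \cap M_\omega$ with $\sum_{i=1}^\infty p^{(j)}_i = 1$ and $p^{(j)}_i s_j(p^{(j)}_i) = 0$ for all $i$. The natural guess for a partition working simultaneously for all $s_j$ is the common refinement: the family of products $\prod_{j=1}^m p^{(j)}_{i_j}$ indexed by $(i_1, \ldots, i_m) \in \mathbb{N}^m$. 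The main obstacle is that these factors need not commute, so the product of projections is not obviously a projection, and it is not clear that a single tuple $(p^{(j)}_i)_i$ can be chosen to lie in a commuting family. This is precisely where the ultraproduct framework and reindexation must do the work.

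The key device is a reindexation (diagonal) argument. Projections coming from $N^\omega \cap M_\omega$ are represented by bounded sequences in $N$; since $\Gamma$ is countable, one can select the sequences representing the various $(p^{(j)}_i)_i$ along a common subsequence dictated by $\omega$ so that, in the ultraproduct, the relevant families commute. Concretely, I would apply the standard maneuver (as invoked in Remark \ref{Rem:cf}(1) via \cite{Tak3}, Chapter XVII) that realizes the abelian von Neumann algebras generated by each $(p^{(j)}_i)_i$ inside a common copy of $(N^\omega \cap M_\omega)$ so that they pairwise commute. Once commutativity is arranged, the products $q_{(i_1,\ldots,i_m)} := \prod_{j=1}^m p^{(j)}_{i_j}$ are genuine projections in $N^\omega \cap M_\omega$, they are mutually orthogonal, and they sum to $1$ because $\sum_{i_j} p^{(j)}_{i_j} = 1$ for each $j$. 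Reindexing $\mathbb{N}^m \cong \mathbb{N}$ produces the desired single sequence $(p_i)_{i=1}^\infty$.

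It remains to check the orthogonality condition $p_i s(p_i) = 0$ for every $s \in F$. Fix $s = s_j$ and a multi-index $(i_1, \ldots, i_m)$. Using that $s_j(q_{(i_1,\ldots,i_m)})$ is a product of the $s_j(p^{(\ell)}_{i_\ell})$ and that $q_{(i_1,\ldots,i_m)}$ already contains the factor $p^{(j)}_{i_j}$, the product $q_{(i_1,\ldots,i_m)}\, s_j(q_{(i_1,\ldots,i_m)})$ contains the subproduct $p^{(j)}_{i_j}\, s_j(p^{(j)}_{i_j}) = 0$; since all factors commute, the entire product vanishes. This gives $p_i s_j(p_i) = 0$ for every $i$ and every $s_j \in F$, completing the argument. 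The only genuinely delicate point, as noted, is the simultaneous commuting realization of the several partitions, which I expect to handle by the reindexation trick standard in the theory of the asymptotic centralizer; everything after that is bookkeeping.
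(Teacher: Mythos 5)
Your proposal is correct and takes essentially the same route as the paper, whose entire proof of Lemma \ref{Lem:finite} is the single line ``this follows from standard reindexation arguments'': you have simply unpacked that reindexation, extracting a single-element partition for each $s \in F$ from Definition \ref{Def:cfW} and making the finitely many families commute inside $N^\omega \cap M_\omega$ by a diagonal argument before taking the common refinement. One harmless imprecision: you do not need all factors of $q$ to commute with those of $s_j(q)$ --- since the automorphism $s_j$ carries a commuting family to a commuting family, rearranging within $q$ and within $s_j(q)$ separately already places $p^{(j)}_{i_j}$ adjacent to $s_j(p^{(j)}_{i_j})$, so the pairwise commutation of the families that your reindexation provides suffices.
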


\begin{proof}
This follows from standard reindexation arguments.
\end{proof}
\begin{Lem}\label{Lem:ultraW}
Let $N \subset M$ be an inclusion of $\Gamma$-von Neumann algebras.
Let $P_{N^\omega \cap M_\omega} \subset (M \vnc \Gamma)^\omega$ denote the set of projections in $N^\omega \cap M_\omega$.
Then for any $a\in M \vnc \Gamma$, we have
\[\mathfrak{S}(a; P_{N^\omega \cap M_\omega})\cap (M \vnc \Gamma) \subset \Wso(N, a).\]
\end{Lem}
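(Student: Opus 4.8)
The plan is to show that every compression sum $\sum_{i=1}^\infty p_ia p_i$ occurring in the definition of $\mathfrak{S}(a;P_{N^\omega\cap M_\omega})$ can, at the level of the ultrapower $(M\vnc\Gamma)^\omega$, be represented by sequences valued in $\mathcal{N}:=\Wso(N,a)$, and then to read off the conclusion for those elements that happen to fall back into $M\vnc\Gamma$. The underlying principle is simple: the projections in $P_{N^\omega\cap M_\omega}$ are represented by sequences valued in $N\subset\mathcal{N}$, and $a\in\mathcal{N}$, so conjugating $a$ by such projections stays inside $\mathcal{N}$ coordinatewise. The only real work is topological, in the ultrapower.

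First I would make sense of the inclusion $P_{N^\omega\cap M_\omega}\subset(M\vnc\Gamma)^\omega$ used implicitly in the statement. Since the canonical conditional expectation $E\colon M\vnc\Gamma\to M$ is faithful and normal, the functoriality of Ocneanu's ultrapower (see \cite{AH}) provides a normal embedding $M^\omega\hookrightarrow(M\vnc\Gamma)^\omega$ sending a class $[(m_j)_j]$ to the class of the same sequence. As $N^\omega\cap M_\omega\subset M_\omega\subset M^\omega$, each $p\in P_{N^\omega\cap M_\omega}$ has a representative $(p^{(j)})_j$ with $p^{(j)}\in N$ projections, now viewed inside $(M\vnc\Gamma)^\omega$. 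Next, fix mutually orthogonal $p_1,p_2,\dots\in P_{N^\omega\cap M_\omega}$ with such representatives, regard $a$ as a constant sequence, and compute in $(M\vnc\Gamma)^\omega$
\[\sum_{i=1}^n p_i a p_i=\Big[\Big(\sum_{i=1}^n p_i^{(j)} a p_i^{(j)}\Big)_j\Big].\]
Each entry lies in $\mathcal{N}$ because $p_i^{(j)}\in N\subset\mathcal{N}$ and $a\in\mathcal{N}$; note that no orthogonality of the representatives is needed for this, only for the $\sigma$-strong convergence of the full series, which is guaranteed by $p_ip_{i'}=0$ for $i\neq i'$. Thus every finite truncation lies in $\mathcal{N}^{\mathrm{diag}}:=\{[(c_j)_j]\in(M\vnc\Gamma)^\omega:c_j\in\mathcal{N}\}$ (after zeroing an $\omega$-negligible index set we may take the representatives of norm $\leq\|a\|$). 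Since $\sum_{i=1}^\infty p_i a p_i$ is the $\sigma$-strong limit of its truncations, the whole generating set of $\mathfrak{S}(a;P_{N^\omega\cap M_\omega})$, and hence its $\sigma$-strong closure, lies in the $\sigma$-strong closure of the $\|a\|$-ball of $\mathcal{N}^{\mathrm{diag}}$.

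Finally I would descend to $M\vnc\Gamma$. Fix a faithful normal state $\psi$ on $M$, set $\varphi:=\psi\circ E$ on $M\vnc\Gamma$, and let $\varphi^\omega$ be the induced state on $(M\vnc\Gamma)^\omega$. Let $x\in\mathfrak{S}(a;P_{N^\omega\cap M_\omega})\cap(M\vnc\Gamma)$. By the previous paragraph there is a bounded net $y_\lambda=[(c_j^\lambda)_j]$, $c_j^\lambda\in\mathcal{N}$, converging $\sigma$-strongly to $x$; unpacking $\|y_\lambda-x\|_{\varphi^\omega}^2=\lim_{j\to\omega}\|c_j^\lambda-x\|_\varphi^2$ gives $\lim_\lambda\lim_{j\to\omega}\|c_j^\lambda-x\|_\varphi=0$, so a diagonal choice $j=j_\lambda$ yields a bounded net $(c_{j_\lambda}^\lambda)_\lambda$ in $\mathcal{N}$ with $\|c_{j_\lambda}^\lambda-x\|_\varphi\to0$. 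Since $\|\cdot\|_\varphi$ induces the $\sigma$-strong topology on bounded sets and $\mathcal{N}$ is $\sigma$-strongly closed, this forces $x\in\mathcal{N}=\Wso(N,a)$, proving the lemma. The step I expect to be the main obstacle is precisely this last descent: justifying that $\sigma$-strong convergence in $(M\vnc\Gamma)^\omega$ unpacks to the iterated limit above and that the diagonal extraction can be carried out while simultaneously preserving membership in $\mathcal{N}$ and uniform boundedness. Everything before it is bookkeeping with representing sequences and the standard embedding of ultrapowers afforded by $E$.
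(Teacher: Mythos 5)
Your proof is correct and takes essentially the same route as the paper's, which approximates $b$ in $\|\cdot\|_{\psi^\omega}$ by a finite pinching $\sum_{i=1}^k p_i a p_i$ with pairwise orthogonal $p_i \in P_{N^\omega\cap M_\omega}$, lifts these by the same standard functional-calculus argument to pairwise orthogonal projections $q_1,\ldots,q_k$ in $N$, and selects a good coordinate via $\|\cdot\|_{\psi^\omega}=\lim_{j\to\omega}\|\cdot\|_\psi$ --- i.e.\ exactly your descent, without the $\mathcal{N}^{\mathrm{diag}}$ scaffolding. One cosmetic repair: zeroing an $\omega$-negligible index set does \emph{not} yield representatives of norm at most $\|a\|$; instead either truncate by functional calculus (which keeps the coordinates in $\Wso(N,a)$) or choose the coordinatewise lifts pairwise orthogonal, so the pinching bound holds in every coordinate.
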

\begin{proof}
Fix a faithful normal state $\varphi$ on $M$.
Put $\psi := \varphi \circ E$.
Let $b \in \mathfrak{S}(a; P_{N^\omega \cap M_\omega})\cap (M \vnc \Gamma)$ be given.
Then, by assumption,
for any $\epsilon >0$,
there are pairwise orthogonal projections $p_1, \ldots, p_k \in P_{N^\omega \cap M_\omega}$
satisfying $\| \sum_{i=1}^k p_i a p_i - b\|_{\psi^\omega} < \epsilon$.
Then, by a standard functional calculus argument, one can choose pairwise orthogonal projections $q_1, \ldots, q_k$ in $N$
with $\|\sum_{i=1}^k q_i a q_i -b\|_\psi <\epsilon$.
Since $\epsilon>0$ is arbitrary,
we conclude $b\in \Wso(N, a)$.
\end{proof}
\begin{proof}[Proof of Theorem \ref{Thm:NCMainW}]
We continue the notations of Lemma \ref{Lem:ultraW}.
Note that $P_{N^\omega \cap M_\omega} \subset (M^\omega)_{\varphi^\omega}$.
Hence, by Lemmas \ref{Lem:ucpW} and \ref{Lem:ultraW}, it suffices to show
$E(a) \in \mathfrak{S}(a; P_{N^\omega \cap M_\omega})$
for any $a\in M \ac \Gamma$.
(We remark that Lemma \ref{Lem:ucpW} is valid without separability of the predual.)

Let $a\in M \ac \Gamma$ be given.
Set $F:= \{s\in \Gamma \setminus \{e \}: E_s(a)\neq 0\}$.
Then by Lemma \ref{Lem:finite},
there is a sequence $(p_i)_{i=1}^\infty$
of projections in $N^\omega \cap M_\omega$
satisfying $\sum_{i=1}^\infty p_i=1$, $p_j s(p_j)=0$ for all $j\in \mathbb{N}$ and $s\in F$.
These relations yield
$\sum_{i=1}^\infty p_i a p_i =E(a)$ as desired.
\end{proof}
In the rest of this section,
we give examples of centrally $\Gamma$-free inclusions.
We first record the following permanence property of central freeness.
\begin{Rem}[Stability under tensor products and quotients]\label{Rem:stab}
Given a centrally $\Gamma$-free inclusion $A\subset B$ of $\Gamma$-\Cs-algebras.
Then, for any non-degenerate inclusion $C\subset D$ of $\Gamma$-\Cs-algebras and for any $\Gamma$-ideal $I$ of the maximal tensor product $B \otimes_{\rm max} D$,
the inclusion \[\Cso( A \cdot C, I)/I \subset (B \motimes D)/{I}\]
is again centrally $\Gamma$-free.
\end{Rem}

\begin{Exm}[Non-commutative Bernoulli shifts for \Cs-algebras]\label{Exm:NCBC}
Let $A \subset B$ be a unital inclusion of \Cs-algebras
and let $\Gamma$ be an infinite group. 
Assume that $A$ is simple (and non-commutative).
We will show that the inclusion of the non-commutative Bernoulli shifts
$\bigotimes_\Gamma A \subset \bigotimes _\Gamma B$ is centrally $\Gamma$-free.
Let $\sigma, \varrho \colon \Gamma \curvearrowright \bigotimes_\Gamma B$
denote the left and right shift action.
Let $s\in \Gamma \setminus \{e\}$ be given.
We first observe that there is a nonzero positive element $a\in \bigotimes_\Gamma A$
with $a \sigma_s(a)=0$.
To see this, for $t\in \Gamma$,
let $\iota_t \colon A  \rightarrow \bigotimes_\Gamma A$
denote the canonical embedding into the $t$-th tensor product factor.
Choose two nonzero positive elements $a_1, a_2 \in A$
with $a_1 a_2=0$.
Then $a:= \iota_e(a_1)\iota_s(a_2)$ possesses the desired property.
Now, since $\bigotimes_\Gamma A$ is unital and simple,
one can choose a sequence $x_1, \ldots, x_n \in A$ satisfying
$\sum_{i=1}^n x_i^\ast a^2 x_i=1$.
Set $x_i:=0$ for $i>n$ and
define $c:=(ax_i)_{i=1}^\infty \in \ell^2(A)$.
Then it is clear that $c^\ast c=1$, $c^\ast \sigma_s(c)=0$.
Now for any given $b_1, b_2$ in $\bigotimes_\Gamma B$ and $\epsilon>0$, choose $t\in \Gamma$ satisfying, with 
$d:= \varrho_t(c)$,
$\|db_i^\ast - b_i^\ast d\|< \epsilon$ for $i=1, 2$.
As $\varrho_t$ commutes with $\sigma_s$,
we have $d^\ast \sigma_s(d)= \varrho_t(c^\ast \sigma_s(c))=0$.
Hence \[\| d^\ast b_1 d -b_1\| \leq \|db_1^\ast -b_1^\ast d \|_2< \epsilon,\]
\[\| d^\ast b_2 \sigma_s(d)\| \leq \|db_2^\ast -b_2^\ast d \|_2 + \| b_2 \| \| d^\ast \sigma_s(d)\|
<  \epsilon.\]
This proves the central $\Gamma$-freeness of $\bigotimes_\Gamma A \subset \bigotimes _\Gamma B$.
\end{Exm}

\begin{Exm}[Infinite tensor product actions]\label{Exm:NCC}
Let $I$ be an infinite set.
For each $i \in I$,
let $B_i$ be a unital $\Gamma$-\Cs-algebra, $A_i$ be a unital $\Gamma$-\Cs-subalgebra of $B_i$, and assume that infinitely many $A_i$ admits a unital
centrally $\Gamma$-free \Cs-subalgebra $C_i$.
Put $A:= \bigotimes_{i\in I} A_i$, $B:= \bigotimes_{i\in I} B_i$.
We equip $A, B$, with the diagonal $\Gamma$-action.
Then the inclusion $A \subset B$ is centrally $\Gamma$-free.
\end{Exm}
\begin{Exm}[Minimal ambient nuclear \Cs-algebras]\label{Exm:minnuc}
We now construct new examples of minimal ambient nuclear \Cs-algebras (see \cite{SuzMin} for the first examples).
We emphasize that, in contrast to the von Neumann algebra case, the existence of a minimal ambient nuclear \Cs-algebra is not clear.
Indeed, the class of nuclear \Cs-algebras does not form a monotone class \cite{Suz17}.
Novelties of this approach are
\begin{enumerate}[(i)]
\item we only use the primeness of the action rather than the much stronger property called property $\mathcal{R}$
(see \cite{SuzMin}, Proposition 3.3),
\item we do not use the Powers property of (subgroups of) the acting group.
\end{enumerate}
(Here we recall that a topological dynamical system
$\alpha$ is said to be prime if it has no non-trivial proper factors.)

Let $\Gamma$ be a discrete group with the AP.
Take an amenable prime action $\alpha \colon \Gamma \curvearrowright X$
on a compact space $X$
(see Propositions 3.3 and 3.7 of \cite{SuzMin} for existence results).

Take a simple nuclear centrally free $\Gamma$-\Cs-algebra $A$ (cf.~ Examples \ref{Exm:NCBC}, \ref{Exm:NCC}).
Theorem 4.3.4 in \cite{BO} implies nuclearity of $[A \otimes C(X)] \rc \Gamma$.
By Theorem \ref{Thm:NCMainC} and the tensor splitting theorem \cite{Zac}, \cite{Zsi}, the inclusion $A \rc \Gamma \subset [A\otimes C(X)] \rc \Gamma$ has no proper intermediate \Cs-algebras.
Note that when $\Gamma$
is non-amenable, typically $A\rc \Gamma$ is non-nuclear (for instance when $A$ has a $\Gamma$-invariant state).
(However it is occasionally nuclear \cite{Suz18}.)
\end{Exm}
\begin{Exm}[Non-commutative Bernoulli shifts for von Neumann algebras]\label{Exm:NCBW}
Let $Q$ be a von Neumann algebra
with a faithful normal state $\varphi$
satisfying $Q_\varphi \neq \mathbb{C}$.
Here we only consider infinite groups $\Gamma$.
Let $\sigma$ and $\varrho$ denote
the left and right shift action of $\Gamma$ on $(M, \psi):=\bigvotimes_\Gamma(Q, \varphi)$ respectively.
Then $\sigma$ is centrally free.
To see this, choose an abelian von Neumann subalgebra
$\mathbb{C} \neq A \subset Q_\varphi$.
Then the action $\Gamma \curvearrowright B:=\bigvotimes_{\Gamma} (A, \varphi|_A)$
comes from a Bernoulli shift on a non-trivial standard probability space, which is essentially free.
Thus, by Lemma \ref{Lem:efree}, for each $s\in \Gamma\setminus \{e\}$,
one can choose projections $p_1, p_2, p_3$ in $B$
satisfying $p_1 + p_2 + p_3=1$, $p_j \sigma_s(p_j)=0$ for each $j$.
Take a sequence $(s_i)_{i=1}^\infty$ in $\Gamma$ tending to infinity.
The sequences
$(\varrho_{s_i}(p_j))_{i=1}^\infty$; $j=1, 2, 3$,
then define the projections $q_1, q_2, q_3 \in M_{\omega}$
satisfying $q_1 + q_2 +q_3=1$ and $q_j s(q_j)=0$ for all $j$.

\end{Exm}
\begin{Exm}[Twisted variant of Ge--Kadison's splitting theorem]
For any $\Gamma$-von Neumann algebra $N$
and for any centrally free $\Gamma$-factor $M$,
by applying Theorem \ref{Thm:NCMainW}
and \cite{GK} to $ M \subset M \votimes N$,
we obtain the lattice isomorphism
\[P \mapsto [M \votimes P]\vnc \Gamma.\]
between the lattice of $\Gamma$-von Neumann subalgebras $P$ of $N$
and that of intermediate von Neumann algebras of
$M \vnc \Gamma \subset [M \votimes N]\vnc \Gamma.$
This phenomenon can be seen as a twisted version of Ge--Kadison's
tensor splitting theorem \cite{GK}.
Combining this with Corollary \ref{Corint:Mar} (taking $M$ to be amenable),
we obtain further examples of maximal amenable subalgebras.

Now consider an amenable group $\Gamma$.
Let $M$ be a centrally $\Gamma$-free AFD II$_1$ factor.
Then, each measure preserving (not necessary free) prime action $\alpha$ of $\Gamma$ on a diffuse probability space $(X, \mu)$ (e.g.,~the Cha\`{c}on system (\cite{Gla}, Theorem 16.6)) associates an infinite index subfactor of the AFD II$_1$ factor with no proper intermediate von Neumann algebras.
Indeed, by the primeness of $\alpha$ and the above observation, the inclusion
$M \vnc \Gamma \subset [M \votimes L^\infty(X)] \vnc \Gamma$
has no proper intermediate von Neumann algebras.
\end{Exm}
\begin{Exm}[Realizations of the intermediate group lattices]\label{Exm:Lat}
Let $G$ be a locally compact second countable group and $H$ be a closed subgroup of $G$.
We realize the lattice of intermediate closed groups of $H \subset G$
as the lattice of intermediate subfactors of an irreducible subfactor.
(Recall that a subfactor $N \subset M$ is said to be irreducible
if all intermediate von Neumann algebras are factors.)
To see this, take a countable dense subgroup $\Gamma \subset G$.
We regard $\Gamma$ as a discrete group.
As $\Gamma$ is dense in $G$,
any $\Gamma$-invariant von Neumann subalgebra of $L^\infty(G/H)$ is
$G$-invariant. Hence it must be of the form $L^\infty(G/L)$
for some closed intermediate subgroup $L$ of $H \subset G$ (cf.~\cite{Zim}, Appendix B).
Now take a centrally free $\Gamma$-factor $N$ (see Example \ref{Exm:NCBW}).
As $[N \votimes L^\infty(G/L)'] \vnc \Gamma$ is a factor for any $L$
(Chapter V.7 of \cite{Tak1}), the inclusion
\[[N \votimes L^\infty(G/H)]\vnc \Gamma \subset [N\votimes \mathcal{B}(L^2(G/H))] \vnc \Gamma\]
possesses the desired properties.
Note that thanks to Theorem 2 of \cite{TT}, this result can be extended to general locally compact groups.
However, if the group is not second countable, then the factors are no longer separable.
\end{Exm}
Here we record the following interesting observation on the Galois correspondence theorem,
which is an immediate consequence of Example \ref{Exm:Lat}.
Here we recall that an action of a locally compact group $G$ on a factor $M$
is said to be minimal if the fixed point subalgebra $M^G$ of $G$
is irreducible in $M$.
\begin{Cor}
Any locally compact group $G$ admits a minimal action on a factor $M$ with the following property.
The map $H \mapsto M^H$
gives a lattice anti-isomorphism between the lattice of closed subgroups $H$ of $G$ and
the lattice of intermediate von Neumann algebras of $M^G \subset M$.
\end{Cor}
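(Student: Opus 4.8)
The plan is to exhibit the anti-isomorphism as the Galois correspondence of a genuine translation action of $G$, obtained from the construction underlying Example \ref{Exm:Lat}. Assume first that $G$ is second countable, fix a countable dense subgroup $\Gamma \subset G$, and choose a centrally free $\Gamma$-factor $N$ (these exist by Example \ref{Exm:NCBW}). Let $\Gamma$ act on $L^\infty(G)$ by left translation and set $M := [N \votimes L^\infty(G)] \vnc \Gamma$, where $\Gamma$ acts diagonally, centrally freely on $N$ and by left translation on $L^\infty(G)$. The crucial observation is that the right translation action of $G$ on $L^\infty(G)$ commutes with the left translation $\Gamma$-action; hence it extends to an honest $G$-action $\theta$ on $M$ that fixes $N$ and every implementing unitary $u_\gamma$ and acts by right translation on the coefficients in $L^\infty(G)$. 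This $\theta$ is the action I would use. Note that I deliberately take the ambient algebra to be $L^\infty(G)$ rather than $\mathcal{B}(L^2(G))$ as in Example \ref{Exm:Lat}: a fixed-point map is intrinsically order-reversing, so to produce an \emph{anti}-isomorphism I want the covariant datum $L^\infty(G/H)$, not its commutant.

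A routine Fourier-coefficient argument then identifies the fixed-point algebras. Since $\theta_g$ fixes $N$ and the $u_\gamma$ and moves coefficients by right translation, an element of $M$ is $\theta|_H$-invariant if and only if each of its coefficients lies in the right-$H$-invariant functions $L^\infty(G/H)$; thus $M^H = [N \votimes L^\infty(G/H)] \vnc \Gamma$. In particular $M^G = N \vnc \Gamma$ and $M^{\{e\}} = M$, and $H \mapsto M^H$ is manifestly order-reversing. To see that these exhaust the intermediate von Neumann algebras, I would argue as in Example \ref{Exm:Lat}: the inclusion $N \subset N \votimes L^\infty(G)$ is centrally $\Gamma$-free (central freeness of the factor $N$ is inherited by the tensor inclusion, cf.\ Remark \ref{Rem:stab}), so Theorem \ref{Thm:NCMainW} reduces any intermediate von Neumann algebra of $N \vnc \Gamma \subset M$ to an intermediate $\Gamma$-von Neumann algebra of $N \subset N \votimes L^\infty(G)$; the Ge--Kadison splitting theorem \cite{GK} (valid since $N$ is a factor) writes the latter as $N \votimes B$ for a $\Gamma$-invariant von Neumann subalgebra $B \subset L^\infty(G)$; and density of $\Gamma$ upgrades $\Gamma$-invariance to $G$-invariance, forcing $B = L^\infty(G/H)$ for a unique closed subgroup $H$ (cf.\ Appendix B of \cite{Zim}). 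Together with the identities $M^{H_1} \cap M^{H_2} = M^{\overline{\langle H_1, H_2\rangle}}$ and $\Wso(M^{H_1}, M^{H_2}) = M^{H_1 \cap H_2}$, this yields the desired lattice anti-isomorphism.

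Finally I would verify that $\theta$ is a minimal action, which amounts to checking that $M$ is a factor and that $M^G = N \vnc \Gamma$ is irreducible in $M$. Both follow from a single relative-commutant computation: proper outerness of the central free $\Gamma$-action on the factor $N$ annihilates all off-diagonal Fourier terms, giving $(N \votimes 1)' \cap M = 1 \votimes L^\infty(G)$, and intersecting with the commutant of the $u_\gamma$ leaves only the left-$\Gamma$-invariant functions, which are scalar because the dense subgroup $\Gamma$ acts ergodically on $(G, \text{Haar})$. Hence $(M^G)' \cap M = \mathbb{C}$, giving irreducibility and, a fortiori, factoriality of $M$. For a general (not necessarily second countable) locally compact $G$ one replaces the countable dense subgroup by the construction of Theorem 2 of \cite{TT}, at the cost of separability, exactly as remarked at the end of Example \ref{Exm:Lat}.

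The step I expect to be the main obstacle is establishing that the correspondence is \emph{perfect} and has the correct variance simultaneously: one must both capture every intermediate von Neumann algebra (this is precisely where Theorem \ref{Thm:NCMainW}, Ge--Kadison, and the density of $\Gamma$ combine to convert operator-algebraic intermediacy into the tractable datum of a closed subgroup) and arrange the ambient algebra so that the fixed-point map is order-reversing onto this lattice rather than order-preserving. The remaining computations --- well-definedness of $\theta$, the fixed-point identification, and the relative commutant --- are then routine given central freeness and ergodicity.
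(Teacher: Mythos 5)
Your proof is correct, but it does not follow the paper's own route, and the difference is substantive. The paper's proof is a two-line reduction: it sets $M:=[N\votimes \mathcal{B}(L^2(G))]\vnc\Gamma$ (Example \ref{Exm:Lat} with $H=\{e\}$, so $\Gamma$ acts by $\ad$ of the left regular representation), lets $G$ act by $\id_N\votimes\ad\rho(g)$ with $\rho$ the right regular representation, and defers to ``the argument in Example \ref{Exm:Lat}'' and \cite{TT}. In that model the fixed-point algebras are $M^H=[N\votimes\rho(H)']\vnc\Gamma$, so the bottom of the correspondence is $[N\votimes\lambda(G)'']\vnc\Gamma$ rather than $[N\votimes L^\infty(G)]\vnc\Gamma$, and meshing the fixed-point lattice with the intermediate-algebra lattice then genuinely leans on Takesaki--Tatsuuma duality ($\rho(H)'=L^\infty(G/H)\vee\lambda(G)''$); note moreover that any von Neumann algebra $B$ with $\lambda(G)''\subset B\subset\mathcal{B}(L^2(G))$ is automatically $\ad\lambda(\Gamma)$-invariant (the unitaries $\lambda(\gamma)$ lie in $B$), and $1\votimes\rho(G)''$ commutes with $[N\votimes\lambda(G)'']\vnc\Gamma$, so the Example-\ref{Exm:Lat} splitting argument does not transfer verbatim to this inclusion and the paper's sketch needs exactly the repair you chose. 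Your instinct about variance --- replacing the ambient $\mathcal{B}(L^2(G))$ by $L^\infty(G)$ so that $M^H=[N\votimes L^\infty(G/H)]\vnc\Gamma$ is the covariant datum itself, with $M^G=N\vnc\Gamma$ --- makes everything an honest computation: Theorem \ref{Thm:NCMainW} (central $\Gamma$-freeness of $N\subset N\votimes L^\infty(G)$ does pass from $N$, though you should note that Remark \ref{Rem:stab} is stated only for \Cs-algebras and supply the small $N_\omega$-argument) plus Ge--Kadison reduces intermediate algebras to left-$\Gamma$-invariant, hence by density and $\sigma$-weak closedness left-$G$-invariant, subalgebras of $L^\infty(G)$, which \cite{Zim} (Appendix B) or \cite{TT} identifies with the $L^\infty(G/H)$; and your relative-commutant computation (outerness of the $\Gamma$-action on the factor $N$ kills off-diagonal coefficients, ergodicity of the dense translation subgroup kills the rest) correctly yields $(M^G)'\cap M=\mathbb{C}$, i.e.\ minimality and factoriality, with faithfulness of the right-translation action immediate. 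What the paper's formulation buys, when completed, is that it stays inside the already-established Example \ref{Exm:Lat}; what yours buys is a self-contained, verifiably minimal model in which the anti-isomorphism is the fixed-point functor itself, and in the author's framework it is the version one should actually run.
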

\begin{proof}
We use the notations used in Example \ref{Exm:Lat} with $H=\{ e\}$.
Set $M:=[N\votimes \mathcal{B}(L^2(G))]\vnc \Gamma$.
Consider the $G$-action
\[\id_N \votimes \rho \colon G \curvearrowright N\votimes \mathcal{B}(L^2(G)),\]
where $\rho \colon G \curvearrowright  \mathcal{B}(L^2(G))$ denotes the action induced from the right regular action. 
Then this action obviously commutes with the equipped $\Gamma$-action.
Hence it extends to the $G$-action on $M$. 
By the argument in Example \ref{Exm:Lat} (cf.~\cite{TT}), this action satisfies the required properties.
\end{proof}
\begin{Rem}
Related to Remark \ref{Rem:stab} and Examples \ref{Exm:minnuc} to \ref{Exm:Lat}, we remark that the equivariant version of the tensor splitting theorems \cite{GK}, \cite{Zac}, \cite{Zsi} fail in general. 
More precisely, for a $\Gamma$-simple \Cs-algebra $A$ (i.e., with no proper $\Gamma$-ideals) and for a unital $\Gamma$-\Cs-algebra
$B$, an intermediate $\Gamma$-\Cs-algebra of $A\subset A \otimes B$
does not need to split into $A \otimes B_0$, $B_0 \subset B$, and similarly for the von Neumann algebra case.
Indeed, let $\Gamma$ be a finite group of order at least $3$.
Consider the inclusion 
$\ell^\infty(\Gamma) \otimes \mathbb{C} \subset \ell^\infty(\Gamma)\otimes \ell^\infty(\Gamma)$.
Here $\Gamma$ is equipped with the left translation $\Gamma$-action.
Then $\ell^\infty(\Gamma)$ is $\Gamma$-simple.
On $\Gamma \times \Gamma$,
we define the equivalence relation $\sim$ by declaring
$(s, t)\sim (s', t')$ if $s= s'$
and either $t, t' \in \Gamma \setminus \{s\}$ or $t= t'$ holds.
Define $Z:= (\Gamma \times \Gamma) /\sim$.
Then $\ell^\infty(Z)$ defines an intermediate $\Gamma$-\Cs-algebra
of the inclusion $\ell^\infty(\Gamma)\otimes \mathbb{C} \subset \ell^\infty(\Gamma)\otimes \ell^\infty(\Gamma)$,
while $\ell^\infty(Z)$ does not split into $\ell^\infty(\Gamma) \otimes \ell^\infty(W)$
for any factor $W$ of $\Gamma$.
\end{Rem}

\section{Constructions of exotic endomorphisms}\label{Sec:Pf}
In this final section we construct
``exotic'' endomorphisms on \Cs-algebras.
Thanks to Kirchberg's tensor absorption theorem
(\cite{Kir94}, Theorem 3.15 of \cite{KP}), the next theorem implies Corollary \ref{Corint:End}.
The key idea is to realize
the Cuntz algebra $\mathcal{O}_\infty$
as a corner of the crossed product of an appropriate free group action.
This realization result itself would be of independent interest.

For the basic facts and notations on K-theory and KK-theory,
we refer the reader to \cite{Bla}.
\begin{Thm}\label{Thm:End}
Let $A$ be a simple \Cs-algebra satisfying $ A \otimes \mathcal{O}_\infty \cong A$.
Then there is an endomorphism $\sigma \colon A \rightarrow A$ with the following properties.
\begin{itemize}
\item
The inclusion $\sigma(A) \subset A$ has no proper intermediate \Cs-algebras.
\item
The inclusion $\sigma(A) \subset A$ does not admit a conditional expectation.
\end{itemize}
\end{Thm}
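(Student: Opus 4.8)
The plan is to construct the endomorphism $\sigma$ as a corner-compression of a crossed product by a free group action, so that Theorem \ref{Thm:NCMainC} applies to rule out proper intermediate \Cs-algebras. Since $A \cong A \otimes \mathcal{O}_\infty$, it suffices (by Kirchberg's tensor absorption theorem) to produce a suitable endomorphism on an algebra after tensoring with $\mathcal{O}_\infty$, and the key technical device is to realize $\mathcal{O}_\infty$ as a corner of a reduced crossed product $B \rc \mathbb{F}_\infty$ for an appropriate free action. First I would fix the free group $\mathbb{F}_\infty$ of countably infinite rank (which has the AP, being a free group), and build an action $\gamma \colon \mathbb{F}_\infty \curvearrowright A$ that is centrally $\mathbb{F}_\infty$-free. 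The existence of such an action on a Kirchberg-type algebra is exactly the kind of construction foreshadowed in the introduction (``we first construct an appropriate action of the infinite rank free group on a Kirchberg algebra''), and Examples \ref{Exm:NCBC} and \ref{Exm:NCC} suggest building it as a non-commutative Bernoulli-type shift or infinite tensor product so that central $\Gamma$-freeness is automatic.

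The core of the argument is the identification of $\mathcal{O}_\infty$ as a corner. Concretely, I would realize the canonical generating isometries $S_1, S_2, \ldots$ of $\mathcal{O}_\infty$ inside the multiplier algebra (or a corner $pBp$) of the crossed product, using the implementing unitaries $u_s$ for $s \in \mathbb{F}_\infty$ together with partial isometries coming from the coefficient algebra. The free generators of $\mathbb{F}_\infty$ provide the infinite family of orthogonal range projections needed to satisfy the Cuntz relations $S_i^\ast S_j = \delta_{ij}$ with $\sum_i S_i S_i^\ast \leq 1$, and freeness of the action guarantees the required orthogonality. Once $\mathcal{O}_\infty$ (equivalently, a copy of $A \cong A \otimes \mathcal{O}_\infty$) sits as a corner $p(A \rc \mathbb{F}_\infty)p$ for a suitable projection $p$, an isometry $v$ with $vv^\ast = p$ furnishes the endomorphism $\sigma := \ad v$, whose image is the compressed corner.

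For the intermediate-algebra statement, I would transport the lattice of intermediate \Cs-algebras of $\sigma(A) \subset A$ to the lattice of intermediate \Cs-algebras of $A \rc \mathbb{F}_\infty$ inside the larger crossed product via the corner equivalence, and then invoke Theorem \ref{Thm:NCMainC}: since the action is centrally $\mathbb{F}_\infty$-free and $\mathbb{F}_\infty$ has the AP, intermediate \Cs-algebras correspond bijectively to intermediate $\mathbb{F}_\infty$-\Cs-algebras of the coefficient inclusion. By choosing the coefficient inclusion to be \emph{$\Gamma$-simple} (no proper $\Gamma$-invariant ideals, hence no proper intermediate $\Gamma$-\Cs-algebras when the inclusion is, say, $A \subset A$ compressed appropriately), the corresponding lattice is trivial, giving no proper intermediate \Cs-algebras for $\sigma(A) \subset A$. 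For the absence of a conditional expectation, I would argue that such an expectation would force the corner projection $p$ to have finite ``index'' data incompatible with the infinite family of orthogonal isometries $S_i$; concretely, an expectation $E \colon A \to \sigma(A)$ would yield a $\sigma(A)$-bimodule projection contradicting $\sum_i S_i S_i^\ast$ being a strictly infinite orthogonal sum (cf.\ the Watatani index obstruction \cite{Wat} cited in the infiniteness remark).

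The hard part will be the explicit realization of $\mathcal{O}_\infty$ as a corner of $A \rc \mathbb{F}_\infty$ while simultaneously arranging that (i) the coefficient inclusion controlling the intermediate lattice is $\Gamma$-simple, and (ii) the resulting $\sigma(A) \subset A$ genuinely admits no expectation. These two requirements pull in opposite directions—$\Gamma$-simplicity wants a rigid, ergodic action, whereas the corner realization wants enough room to fit infinitely many orthogonal isometries—so the delicate point is designing a single action on a single Kirchberg algebra that achieves both. I expect the verification that the compressed corner is again isomorphic to $A$ (using $A \otimes \mathcal{O}_\infty \cong A$ and absorption of the $\mathcal{O}_\infty$ coming from the free generators) to require the classification machinery, but the conceptual obstacle is the simultaneous fine-tuning of freeness, simplicity, and infiniteness.
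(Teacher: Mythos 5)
The central step of your construction fails outright: if $\sigma := \ad v$ for an isometry $v$ with $vv^\ast = p$, then $\sigma(A) = pAp$ is a hereditary corner of $A$, and the compression $a \mapsto pap$ is always a conditional expectation of $A$ onto $pAp$ (for $x, y \in pAp$ one has $p(xay)p = (pxp)a(pyp) = x(pap)y$, since $px = x$, $yp = y$). So \emph{any} endomorphism of inner-compression form admits an expectation, and the second bullet can never be achieved this way, regardless of how the action is tuned. Relatedly, your proposed obstruction---that an expectation would contradict the infinite orthogonal family $\sum_i S_iS_i^\ast$ via Watatani index---is not an obstruction: infinite-index inclusions admit expectations in abundance (slice a tensor factor by a state), and indeed your own inclusion admits one as just noted. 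The paper's mechanism is dynamical, not index-theoretic: $\sigma$ is \emph{not} an inner compression of one algebra but the inclusion of corners $\tilde{q}D_{1,A}\tilde{q} \subset \tilde{q}D_{2,A}\tilde{q}$ of two \emph{different} crossed products $D_{1,A} = (A \otimes D)\rc\Gamma \subset [A \otimes D \otimes C(Q)]\rc\Gamma = D_{2,A}$, each identified with $A$ by Kirchberg--Phillips. Here $Q$ is the Hilbert cube acted on through a dense-image homomorphism $\Gamma = \mathbb{F}_\infty \rightarrow \Homeo(Q)$; since $Q$ carries no invariant probability measure, a multiplicative-domain argument kills all expectations, and primeness of this action together with Zacharias's splitting theorem (not $\Gamma$-simplicity of the coefficients, which by itself produces no nontrivial inclusion to compress) trivializes the lattice of intermediate $\Gamma$-\Cs-subalgebras, so that Theorem \ref{Thm:NCMainC} applies.

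Two further ingredients you omit are essential. First, K-theory: for Kirchberg--Phillips to identify the corners with $A \otimes \mathcal{O}_\infty \cong A$, one must force $K_\ast(D \rc \Gamma) \cong K_\ast(\mathcal{O}_\infty)$; the paper achieves this by inserting an auxiliary Kirchberg algebra $C$ with $K_0(C) \cong \mathbb{Z}[\Gamma]$, $[1_C]_0 = 0$, carrying the translation action, and proving by a word-length argument in $\mathbb{F}_\infty$ that the Pimsner--Voiculescu map $\nu$ satisfies $\ker(\nu) = 0$ and ${\rm coker}(\nu) \cong \mathbb{Z}$. Your plan of writing Cuntz isometries directly from the $u_s$ bypasses this bookkeeping and gives no control of $K_0$ or of the class of the corner projection. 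Second, the compression step itself needs justification on both counts: the lattice passes to the corner via Lemma \ref{Lem:corner}, and---more delicately---one must show compression does not \emph{create} expectations, which the paper does by choosing an isometry $v \in D$ with $vv^\ast \leq q$ (pure infiniteness) and pulling any expectation on the corners back to one on $D_{1,A} \subset D_{2,A}$. Central freeness, finally, is not obtained from a Bernoulli shift on $A$ (none is available on a general $A$) but from $\beta = \bigotimes_{\mathbb{N}}\ad(\iota|_\Gamma)$ on $\bigotimes_{\mathbb{N}}\mathcal{O}_\infty$ via Nakamura's theorem, where $\iota \colon C(\mathbb{T})\rc\Gamma \rightarrow \mathcal{O}_\infty$ comes from a boundary action of $\mathbb{F}_2$ and Phillips's embedding theorem. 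You correctly identified the tension between rigidity and room for infiniteness, but the resolution requires exactly this combination of the Hilbert-cube factor and the K-theoretic engineering, neither of which your sketch supplies.
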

\begin{proof}

Let $\Gamma$
be a countable free group of infinite rank.
Take an amenable minimal topologically free action $\alpha$ of $\Lambda = \mathbb{F}_2$
on the circle $\mathbb{T}$.
(E.g., take a lattice $\Lambda$ in $\PSL(2, \mathbb{R})$ isomorphic to $\mathbb{F}_2$ (see Example E.10 in \cite{BO}).
Let $P$ denote the subgroup of upper triangular matrices in $\PSL(2, \mathbb{R})$.
Then, thanks to Theorem 9.5.3 of \cite{Zim}, Theorem 5.4.1 of \cite{BO},
and Remark 13 of \cite{Oza16}, the left translation action
$\alpha \colon \Lambda \curvearrowright \PSL(2, \mathbb{R})/P \cong \mathbb{T}$
possesses the desired properties.)
By the Pimsner--Voiculescu exact sequence (\cite{PV}, Theorem 3.5),
we obtain the exact sequence
\[ 0 \longrightarrow \mathbb{Z} \longrightarrow K_0(C(\mathbb{T})\rc \Lambda) \longrightarrow \mathbb{Z}^2.\]
From this exact sequence, one can find
a homomorphism
$K_0(C(\mathbb{T})\rc \Lambda) \rightarrow \mathbb{Z}$
sending $[1]_0$ to $1$.
It also follows from \cite{PV} that $C(\mathbb{T})\rc \Lambda$ satisfies the universal coefficient theorem (cf.~Corollary 7.2 of \cite{RS}).
By \cite{AS} and \cite{Ana87} (Theorem 4.4.3 of \cite{BO}),
$C(\mathbb{T}) \rc \Lambda$ is simple and nuclear.
Now we embed $\Gamma$ into $\Lambda$.
Then Theorem 4.1.1 of \cite{Phi}
gives a unital embedding
\[\iota \colon C(\mathbb{T}) \rc \Gamma \subset C(\mathbb{T}) \rc \Lambda \rightarrow \mathcal{O}_\infty.\]
Now define
\[\beta:= \bigotimes_{\mathbb{N}} \ad(\iota|_\Gamma) \colon \Gamma \curvearrowright \bigotimes_{\mathbb{N}}\mathcal{O}_\infty.\]
Note that $\beta$ is centrally free by Theorem 1 of \cite{Nak}.

Next choose a unital Kirchberg algebra $C$ satisfying the universal coefficient theorem with
\[K_0(C) \cong \mathbb{Z}[\Gamma]~ ({\rm as~ an~ additive~ group}),\ [1_C]_0=0,\ K_1(C)=0.\]
Take an action $\gamma \colon \Gamma \curvearrowright C$
whose induced action on $K_0(C)$
is conjugate to the left translation action on the group ring $\mathbb{Z}[\Gamma]$.
(Such an action exists by Theorem 4.1.1 of \cite{Phi}.)
Put 
\[D:= \mathcal{O}_\infty \otimes C \otimes \left(\bigotimes_{\mathbb{N}}\mathcal{O}_\infty\right).\]
We equip $D$ with the $\Gamma$-action
\[\eta := \id_{\mathcal{O}_\infty} \otimes \gamma \otimes \beta.\]
We claim that
\[K_0(D \rc \Gamma) \cong \mathbb{Z},\ K_1(D \rc \Gamma) =0.\]
We fix a free basis $S$ of $\Gamma$.
Then, thanks to the Pimsner--Voiculescu exact sequence \cite{PV},
the claim follows from the following assertion.
The additive map \[\nu \colon \bigoplus_S \mathbb{Z}[\Gamma] \rightarrow \mathbb{Z}[\Gamma]\]
given by
\[(x_s)_{s\in S} \mapsto \sum_{s\in S}(x_s - sx_s)\] satisfies
\[\ker(\nu)=0,\ {\rm coker}(\nu)\cong \mathbb{Z}.\]
(We remark that Theorem 3.5 of \cite{PV} is concentrated on the finite rank free groups.
However the infinite rank case follows from the finite rank case
as the exact sequences are compatible with the canonical inclusions $\mathbb{F}_n \subset \mathbb{F}_{n+1}$; $n\in \mathbb{N}$.)
We prove this assertion.
For $w \in \Gamma \setminus \{e\}$,
denote by $i(w) \in S\sqcup S^{-1}$ the initial alphabet of $w$ with respect to $S$.
Also, for $w\in \Gamma$,
denote by $| w|$ the length of the reduced word of $w$ (with respect to $S$).
For $a\in \mathbb{Z}[\Gamma]$,
define
\[\supp(a):=\{ s\in \Gamma: a(s)\neq 0\}.\]
To prove the first equality, assume we have $x=(x_s)_{s\in S} \in \ker(\nu) \setminus\{0\}$.
Choose $s\in S$ and $t\in \supp(x_s)$
satisfying $|t|\geq |u|$ for all $u\in \bigcup_{w \in S} \supp(x_w)$.
As $\sum_{u\in S} (x_u -u x_u)=0$,
the maximal property of $t$ forces $i(t)= s^{-1}$.
(Otherwise $|st |=|t|+1$ and $\nu(x)(st)=-x_s(t)\neq 0.$)
We then must have an element $u\in S \setminus \{s \} $ with
$t \in \supp({x_u}) \cup \supp({ux_u})$.
Again by the maximal property of $t$,
the relation $t \in \supp({x_u})$ is impossible.
Hence $t \in \supp({ux_u})$, or equivalently, $u^{-1}t \in \supp(x_u)$.
This contradicts to the choice of $t$ as $|u^{-1}t|=|t|+1$.
Thus $\ker(\nu)=0$.
To prove ${\rm coker}(\nu)\cong \mathbb{Z}$,
consider
the homomorphism $\tau_0 \colon \mathbb{Z}[\Gamma] \rightarrow \mathbb{Z}$
given by $a \mapsto \sum_{s\in \Gamma} a(s)$.
Obviously $\tau_0$ is surjective
and $\im(\nu) \subset \ker(\tau_0)$.
As $\delta_w - \delta_e \in \im(\nu)$ for all $w\in \Gamma$ (by induction on $|w|$),
we conclude $\im(\nu) = \ker(\tau_0)$.
Thus $\tau_0$ induces the desired isomorphism
${\rm coker}(\nu) \cong \mathbb{Z}$.

Let $Q=[0, 1]^\mathbb{N}$ denote the Hilbert cube.
Note that $Q$ is a contractible compact metrizable space.
Since $\Homeo(Q)$ is a Polish (hence separable) group in the topology of uniform convergence
(see \cite{Kec}, Chapter 1.9.B (3)),
one can choose a homomorphism
\[h \colon \Gamma \rightarrow \Homeo(Q)\]
with dense image.
As the diagonal action
\[\Homeo(Q) \curvearrowright \{(x_1, x_2)\in Q \times Q: x_1 \neq x_2 \}\]
is transitive (see e.g., \cite{Kle}, Theorem 4.5),
$Q$ admits no non-trivial $\Gamma$-invariant closed equivalence relation.
Thus $h$ is minimal and prime.
We equip $D \otimes C(Q)$ with the $\Gamma$-action $\eta \otimes h$.
Since $Q$ has no $\Homeo(Q)$-invariant probability measure,
the inclusion
\[D_1:= D \rc \Gamma \subset [D\otimes C(Q)] \rc \Gamma :=D_2\]
does not admit a conditional expectation.
Indeed, suppose we have a conditional 
expectation $\Psi \colon D_2 \rightarrow D_1$.
As the center of $D$ only consists of scalars,
the restriction $E \circ \Psi|_{C(Q)}$
defines a $\Gamma$-invariant probability measure on $Q$, a contradiction.
Observe that
$D$, hence also $D \otimes C(Q)$,
can be presented as the norm closure of the increasing union of nuclear $\Gamma$-\Cs-subalgebras whose underlying $\Gamma$-action is amenable
in the sense of Definition 4.3.1 in \cite{BO}.
Indeed the following sequence has the desired properties.
\[\mathcal{O}_\infty \otimes C \otimes \left(\bigotimes_{k=1}^n \mathcal{O}_\infty\right) \otimes
\iota(C(\mathbb{T}))\otimes \left(\bigotimes_{k=n+2}^\infty \mathbb{C} 1_{\mathcal{O}_\infty}\right) , n\in \mathbb{N}.\]
(Cf.~Proposition B in \cite{Suz18}.) 
Therefore by \cite{Ana87} (Theorem 4.3.4 of \cite{BO}), both $D_i$ are nuclear.
By Theorem 7.1 of \cite{BKKO}, both $D_i$ are simple.
Since both $D_i$ have $\mathcal{O}_\infty$ as a tensor factor,
they are purely infinite.
Thanks to the Pimsner--Voiculescu exact sequence \cite{PV},
both $D_i$ are KK-equivalent to $\mathcal{O}_\infty$
and the inclusion map $D_1 \rightarrow D_2$ induces a KK-equivalence.
Now choose a projection $q\in D$ generating $K_0(D_1)$ \cite{Cun}.
By the Kirchberg--Phillips classification theorem \cite{Kir94}, \cite{Phi},
both $q D_i q$ are isomorphic to $\mathcal{O}_\infty$.

We equip $A \otimes D$, $A \otimes D \otimes C(Q)$ with the $\Gamma$-action
$\id_A \otimes \eta$, $\id_A \otimes \eta \otimes h$ respectively.
Consider the inclusion
\[D_{1, A}:= (A \otimes D) \rc \Gamma \subset [A \otimes D \otimes C(Q)] \rc \Gamma:=D_{2, A}.\]
Since $h$ is prime,
Theorem 3.3 of \cite{Zac} implies the following statement.
Any intermediate $\Gamma$-\Cs-algebra
of $ A \otimes D \subset A \otimes D \otimes C(Q) $
is either equal to
$A \otimes D \otimes C(Q)$ or contained in
\[\mathcal{F}(A\otimes D, C(Q), \mathbb{C}):= \left\{ a\in A \otimes D \otimes C(Q): 
(\varphi \otimes \id_{C(Q)})(a) \in \mathbb{C}{\rm~for~all~}\varphi \in (A\otimes D)^\ast\right\}.\]
(We remark that Theorem 3.3 of \cite{Zac} is only stated for unital \Cs-algebras.
However we still can apply it as $A \otimes D$ has approximate units of projections \cite{Zha}.)
Since the inclusion $\mathbb{C} \subset C(Q)$ admits a conditional expectation, we have
$\mathcal{F}(A\otimes D, C(Q), \mathbb{C})= A \otimes D.$
Hence, by Theorem \ref{Thm:NCMainC}, $D_{1, A}\subset D_{2, A}$ has no proper intermediate \Cs-algebras.
Observe that the inclusion $D_{1, A} \subset D_{2, A}$ has no conditional expectations.
Indeed, as $A\subset D_{1, A}$ is a simple tensor factor of $D_{2, A}$,
a multiplicative domain argument shows
that any conditional expectation $\Phi \colon D_{1, A} \rightarrow D_{2, A}$
splits into $\id_A \otimes \bar{\Phi}$ for some conditional expectation
$\bar\Phi\colon D_2 \rightarrow D_1$.
We have already seen that such a $\bar{\Phi}$ does not exist.

Now consider the projection \[\tilde{q}:=1_{\mathcal{M}(A)} \otimes q\in \mathcal{M}(A\otimes D) \left(\subset \mathcal{M}(D_{1, A}) \subset \mathcal{M}(D_{2, A})\right).\]
Note that \[\tilde{q}D_{i, A}\tilde{q} \cong A \otimes qD_i q \cong A \otimes \mathcal{O}_\infty \cong A {\rm~for~}i=1, 2.\]

We next show that the inclusion
$\tilde{q}D_{1, A}\tilde{q} \subset \tilde{q}D_{2, A}\tilde{q}$
has no proper intermediate \Cs-algebras.
This follows from the next lemma.
The proof is straightforward and we leave it to the reader.
\begin{Lem}\label{Lem:corner}
Let $ \mathfrak{A} \subset \mathfrak{B}$ be a non-degenerate inclusion of \Cs-algebras.
Let $r\in \mathcal{M}(\mathfrak{A})$ be a projection.
Then the map
\[\mathfrak{C} \mapsto \Cso(\mathfrak{A}, \mathfrak{C})\]
gives an embedding
of the lattice of intermediate \Cs-algebras $\mathfrak{C}$ of $r\mathfrak{A}r \subset r\mathfrak{B}r$
into that of $\mathfrak{A} \subset \mathfrak{B}$.
Moreover, the map
$\mathfrak{D} \mapsto r\mathfrak{D}r$
gives a left inverse.
\end{Lem}
We remark that the above maps are bijective when $r$ is full in $A$
(i.e., when $\overline{\rm span}(ArA)=A$.)

\ \\
\noindent{\it Proof of Theorem \ref{Thm:End} $($continuation$)$.}
By Lemma \ref{Lem:corner}, the inclusion
$\tilde{q}D_{1, A}\tilde{q} \subset \tilde{q}D_{2, A}\tilde{q}$
has no proper intermediate \Cs-algebras.
As $D_{1, A} \subset D_{2, A}$ does not admit
a conditional expectation, neither does $\tilde{q}D_{1, A}\tilde{q} \subset \tilde{q}D_{2, A}\tilde{q}$.
Indeed, since $D$ is purely infinite,
there is an isometry $v$ in $D$
satisfying $v v^\ast \leq q$.
Put $\tilde{v}:= 1_{\mathcal{M}(A)} \otimes v$.
Then $\tilde{v}^\ast \tilde{q}D_{i, A}\tilde{q}\tilde{v} =D_{i, A}$ for $i=1, 2$.
Hence any conditional expectation $\Psi \colon \tilde{q}D_{2, A}\tilde{q} \rightarrow \tilde{q}D_{1, A}\tilde{q}$
induces a conditional expectation $\tilde{\Psi} \colon D_{2, A} \rightarrow D_{1, A}$
by the formula $\tilde{\Psi}(a):=\tilde{v}^\ast \Psi(\tilde{v} a \tilde{v}^\ast) \tilde{v}$.
This is a contradiction.

Now for $i=1, 2$, choose an isomorphism
$\sigma _i \colon \tilde{q}D_{i, A}\tilde{q} \rightarrow A.$
Denote by $\iota \colon \tilde{q}D_{1, A}\tilde{q} \rightarrow \tilde{q}D_{2, A}\tilde{q}$
the inclusion map.
The endomorphism
$\sigma := \sigma_2 \circ \iota \circ \sigma_1^{-1}$
then possesses the desired properties.
\end{proof}
\subsection*{Acknowledgements}
The author is grateful to Cyril Houdayer
for a stimulating conversation on Margulis' factor theorem \cite{Mar78} during his visiting at Universit\'{e} Paris-Sud in 2015.
He is also grateful to Yoshimichi Ueda
for helpful comments and discussions on the formulation of Theorem \ref{Thm:NCMainW}.
He is also grateful to Toshihiko Masuda for letting him know the reference \cite{TT}.
This work was supported by JSPS KAKENHI Grant-in-Aid for Young Scientists
(Start-up, No.~17H06737) and tenure track funds of Nagoya University.

\end{document}